\newcommand{\FQSym}{\mathbf{FQSym}}
\newcommand{\WQSym}{\mathbf{WQSym}}
\newcommand{\QSh}{{\mathbf{QSh}}}
\newcommand{\QShi}{\mathit{QSh}}
\newcommand{\Sh}{\mathbf{Sh}}
\newcommand{\Shi}{\mathit{Sh}}
\newcommand{\NQShi}{\mathit{NQSh}}
\newcommand{\NSh}{{\mathbf{NSh}}}
\newcommand{\Com}{\mathbf{Com}}
\newcommand{\Comi}{\mathit{Com}}
\newcommand{\As}{{\mathbf{As}}}
\newcommand{\Asi}{\mathit{As}}
\newcommand{\Desc}{\mathbf{Desc}}
\newcommand{\QDesc}{\mathbf{QDesc}}
\newcommand{\PBT}{\mathbf{PBT}}
\newcommand{\ST}{\mathbf{ST}}
\renewcommand{\S}{\mathfrak{S}}
\newcommand{\tdelta}{\tilde{\Delta}}
\newcommand{\N}{\mathbb{N}}
\newcommand{\TSchroder}{\mathbb{T}_{Sch}}
\newcommand{\Tbin}{\mathbb{T}_{bin}}
\newcommand{\Surj}{\mathbb{S}urj}
\def\shuff#1#2{\mathbin{
      \hbox{\vbox{\hbox{\vrule \hskip#2 \vrule height#1 width 0pt}\hrule}\vbox{\hbox{\vrule \hskip#2 \vrule height#1 width 0pt\vrule }\hrule}}}}
\def\shuffl{{\mathchoice{\shuff{5pt}{3.5pt}}{\shuff{5pt}{3.5pt}}{\shuff{3pt}{2.6pt}}{\shuff{3pt}{2.6pt}}}}
\def\shuffle{\, \shuffl \,}
\def\qshuffl{{\mathchoice{\shuff{5pt}{3.5pt}\hspace{-2.9mm}-}{\shuff{5pt}{3.5pt}\hspace{-2.9mm}-}
{\shuff{3pt}{2.6pt}\hspace{-2.2mm}-}{\shuff{3pt}{2.6pt}\hspace{-2.2mm}-}}}
\def\qshuffle{\,\qshuffl\,}
\newtheorem{defi}{\indent Definition}
\newtheorem{lemma}[defi]{\indent Lemma}
\newtheorem{cor}[defi]{\indent Corollary}
\newtheorem{theo}[defi]{\indent Theorem}
\newtheorem{prop}[defi]{\indent Proposition}
\newtheorem{exam}[defi]{\indent Example}
\newcommand{\bun}{
\begin{picture}(5,5)(-2,0)
\put(0,0){\line(0,0){5}}
\end{picture}}
\newcommand{\bdeux}{
\begin{picture}(10,15)(-5,0)
\put(-3.5,4.1){$\vee$}
\put(0,0){\line(0,1){5}}
\end{picture}}
\newcommand{\btroisun}{
\begin{picture}(12,20)(-6,0)
\put(-3.5,4.1){$\vee$}
\put(0,0){\line(0,1){5}}
\put(-6.3,10.7){$\vee$}
\end{picture}}
\newcommand{\btroisdeux}{
\begin{picture}(12,20)(-6,0)
\put(-3.5,4.1){$\vee$}
\put(0,0){\line(0,1){5}}
\put(-0.7,10.7){$\vee$}
\end{picture}}
\newcommand{\schtrois}{
\begin{picture}(10,15)(-5,0)
\put(-3.5,4.1){$\vee$}
\put(0,0){\line(0,1){11}}
\end{picture}}
\newcommand{\bquatreun}{
\begin{picture}(15,25)(-7,0)
\put(-3.5,4.1){$\vee$}
\put(0,0){\line(0,1){5}}
\put(-6.3,10.7){$\vee$}
\put(-9.1,17.3){$\vee$}
\end{picture}}
\newcommand{\bquatredeux}{
\begin{picture}(15,25)(-7,0)
\put(-3.5,4.1){$\vee$}
\put(0,0){\line(0,1){5}}
\put(-6.3,10.7){$\vee$}
\put(-3.5,17.3){$\vee$}
\end{picture}}
\newcommand{\bquatretrois}{
\begin{picture}(15,25)(-7,0)
\put(-3.5,4.1){$\vee$}
\put(0,0){\line(0,1){5}}
\put(-0.7,10.7){$\vee$}
\put(-3.5,17.3){$\vee$}
\end{picture}}
\newcommand{\bquatrequatre}{
\begin{picture}(15,25)(-7,0)
\put(-3.5,4.1){$\vee$}
\put(0,0){\line(0,1){5}}
\put(-0.7,10.7){$\vee$}
\put(2.1,17.3){$\vee$}
\end{picture}}
\newcommand{\bquatrecinq}{
\begin{picture}(20,20)(-10,0)
\put(0,0){\line(0,1){5}}
\put(0,5){\line(2,1){10}}
\put(0,5){\line(-2,1){10}}
\put(-3,6.5){\line(0,1){5}}
\put(3,6.5){\line(0,1){5}}
\end{picture}}
\newcommand{\schquatreun}{
\begin{picture}(12,20)(-6,0)
\put(-3.5,4.1){$\vee$}
\put(0,0){\line(0,1){5}}
\put(-6.3,10.7){$\vee$}
\put(-2.7,11){\line(0,1){6}}
\end{picture}}
\newcommand{\schquatredeux}{
\begin{picture}(15,20)(-7,0)
\put(-3.5,4.1){$\vee$}
\put(0,0){\line(0,1){11}}
\put(-6.3,10.7){$\vee$}
\end{picture}}
\newcommand{\schquatretrois}{
\begin{picture}(15,20)(-7,0)
\put(-3.5,4.1){$\vee$}
\put(0,0){\line(0,1){11}}
\put(-3.5,10.7){$\vee$}
\end{picture}}
\newcommand{\schquatrequatre}{
\begin{picture}(15,20)(-7,0)
\put(-3.5,4.1){$\vee$}
\put(0,0){\line(0,1){11}}
\put(-0.7,10.7){$\vee$}
\end{picture}}
\newcommand{\schquatrecinq}{
\begin{picture}(15,20)(-7,0)
\put(-3.5,4.1){$\vee$}
\put(0,0){\line(0,1){5}}
\put(-0.7,10.7){$\vee$}
\put(3.1,11){\line(0,1){6}}
\end{picture}}
\newcommand{\schquatresix}{
\begin{picture}(20,20)(-10,0)
\put(0,0){\line(0,1){5}}
\put(0,5){\line(2,1){10}}
\put(0,5){\line(-2,1){10}}
\put(-3.5,4.4){$\vee$}
\end{picture}}
\title{Lie theory for quasi-shuffle bialgebras}
\date{}
\begin{document}

\author{Lo\"\i c Foissy}
\address{Laboratoire de Math\'ematiques\\
 Universit\'e de Reims\\
 Moulin de la Housse\\
B.P.1039\\  51687 Reims Cedex 2\\ France}
\email{loic.foissy@univ-reims.fr}

\author{Fr\'ed\'eric Patras}
\address{UMR 6621 CNRS\\
        		Universit\'e de Nice\\
        		Parc Valrose\\
        		06108 Nice Cedex 02\\
        		France}
        		\email{patras@unice.fr}

\maketitle

%\tableofcontents 
\section{Introduction}
Enveloping algebras of Lie algebras are known to be a fundamental notion, for an impressive variety of reasons. Their bialgebra structure allows to make a natural bridge between Lie algebras and groups. As such they are a key tool in pure algebra, algebraic and differential geometry, and so on. Their combinatorial structure is interesting on its own and is the object of the theory of free Lie algebras. Applications thereof include the theory of differential equations, numerics, control theory... 
From the modern point of view, featured in Reutenauer's \it Free Lie algebras \rm \cite{Reutenauer}, the ``right'' point of view on enveloping algebras is provided by the descent algebra: most of their key properties can indeed be obtained and finely described using computations in symmetric group algebras relying on the statistics of descents of permutations. More recently, finer structures have emerged that refine this approach. Let us quote, among others, the Malvenuto-Reutenauer 
Hopf algebra \cite{MR} and its bidendriform structure \cite{foissy2007bidendriform}.

Many features of classical Lie theory generalize to the broader context of algebras over Hopf operads \cite{Livernet}. However, this idea remains largely to be developed systematically. Quasi-shuffle algebras provide for example an interesting illustration of these phenomena, but have not been investigated from this point of view.

The notion of quasi-shuffle algebras can be traced back to the beginings of the theory of Rota--Baxter algebras, but was developed systematically only recently, starting essentially with Hoffman's work, that was motivated by multizeta values (MZVs) and featured their bialgebra structure.
Many partial results on the fine structure of quasi-shuffle bialgebras have been obtained since then \cite{Hoffman,Loday,Novelli,novelli2,foissy2}
but, besides the fact that each of these articles features a particular point of view, they fail to develop systematically a complete theory.

This article builds on these various results and develops the analog theory, for quasi-shuffle algebras, of the theory of descent algebras and their relations to free Lie algebras for classical enveloping algebras.

The plan is as follows. Sections \ref{sect:2} and \ref{sect:3} recall the fundamental definitions. These are fairly standard ideas and materials, excepted for the fact that bialgebraic structures are introduced from the point of view of Hopf operads that will guide later developments. The following section shows how the symmetrization process in the theory of twisted bialgebras (or Hopf species) can be adapted to define a noncommutative quasi-shuffle bialgebra structure on the operad of quasi-shuffle algebras. The properties of its primitive elements are studied from an operadic and enveloping algebra (left adjoint) point of view. Section \ref{sect:5} deals with the algebraic structure of linear endomorphisms of quasi-shuffle bialgebras and studies from this point of view the structure of surjections. Section \ref{sect:6} deals with the projection on the primitives of quasi-shuffle bialgebras -the analog in the present setting of the canonical projection from an enveloping algebra to the Lie algebra of primitives. As in classical Lie theory, a structure theorem for quasi-shuffle algebras follows from the properties of this canonical projection.
Section \ref{sect:7} investigates the relations between the shuffle and quasi-shuffle operads when both are equipped with the Hopf algebra structure inherited from the Hopf operadic structure of their categories of algebras (as such they are isomorphic respectively to the Malvenuto-Reutenauer Hopf algebra, or Hopf algebra of free quasi-symmetric functions, and to the Hopf algebra of word quasi-symmetric functions). We recover in particular the exponential isomorphism relating shuffle and quasi-shuffle bialgebras.
Section \ref{sect:8} studies coalgebra endomorphisms of quasi-shuffle bialgebras and classifies natural Hopf algebra endomorphisms and morphisms relating shuffle and quasi-shuffle bialgebras. 
Section \ref{sect:9} studies coderivations. Quasi-shuffle bialgebras are considered classically as filtered objects (the product does not respect the tensor graduation), however the existence of a natural graded Hopf algebra structure can be deduced from the general properties of their coderivations.
Section \ref{sect:10} explains briefly how the formalism of operads can be adapted to take into account graduations by using decorated operads. We detail then the case of quasi-shuffle algebras and conclude by initiating the study of the analogue, in this context, of the classical descent algebra.
Section \ref{sect:11} shows, using the bidendriform rigidity theorem, that the decorated quasi-shuffle operad is free as a noncommutative shuffle algebra. Section \ref{sect:12} shows that the quasi-shuffle analog of the descent algebra, $\QDesc$, is, up to a canonical isomorphism, a free noncommutative quasi-shuffle algebra over the integers. The last section concludes by investigating the quasi-shuffle analog of the classical sequence of inclusions $\Desc\subset \PBT\subset \Sh$ of the descent algebra into the algebra of planar binary trees, resp. the operad of shuffle algebras. In the quasi-shuffle context, this sequence reads $\Desc\subset \ST\subset \QSh$, where $\ST$ stands for the algebra of Schr\"oder trees and $\QSh$ for the quasi-shuffle operad.

\ \\
{\bf Notations and conventions}
All the structures in the article (vector spaces, algebras, tensor products...) are defined over a field $k$. Algebraic theories and their categories ($\Comi ,\Asi , \Shi ,\QShi \dots$) are denoted in italic, as well as the corresponding free algebras over sets or vector spaces ($QSh(X),Com(V)\dots$). Operads (of which we will study underlying algebra structures) and abbreviations of algebra names are written in bold ($\QSh ,\NSh ,\Com , \FQSym\dots$).

\ \\
{\bf Acknowledgements} {The authors were supported by the grant CARMA ANR-12-BS01-0017. We thank its participants and especially Jean-Christophe Novelli and Jean-Yves Thibon, for stimulating discussions on noncommutative symmetric functions and related structures. This article is, among others, a follow up of our joint works \cite{novelli2,foissy2}.
We also thank the ICMAT Madrid for its hospitality.

\section{Quasi-shuffle algebras}\label{sect:2}

Quasi-shuffle algebras have mostly their origin in the theory of Rota-Baxter algebras and related objects such as MZVs (this because the summation operator of series is an example of a Rota--Baxter operator). As we just mentioned, this is in often traced back to Cartier's construction of free commutative Rota-Baxter algebras \cite{Cartier} but the recent developments really started with Hoffman's \cite{Hoffman}. Further historical details and references can be found in the survey article \cite{KEFsmf}.

Another reason for the development of the theory lies in the theory of combinatorial Hopf algebras and, more specifically, into the developments originating in the theory of quasi-symmetric functions and the dual theory of noncommutative symmetric functions. This line of thought is illustrated in \cite{Novelli,novelli2,foissy2}.

Still another approach originates in the work of Chapoton on the combinatorial and operadic properties of permutohedra and other polytopes
(see e.g. \cite{chapoton2000algebres,chapoton2002operades} and the introduction of \cite{Novelli}). These phenomena lead to the axiomatic definition of tridendriform algebras (or dendriform trialgebras) in \cite{Loday}. 
Commutative tridendriform algebras (CTAs) identify with quasi-shuffle algebras (in the sense that the structure axioms of CTAs are the analog, for quasi-shuffle algebras, of Sch\"utzenberger's axioms for shuffle algebras \cite{schutz})
-we prefer the terminology quasi-shuffle, better established and more intuitive.

We follow here the Rota--Baxter approach, which is the one underlying at the moment most of the applications of the theory and the motivations for its development (besides MZVs and the works initiated by Hoffman in this area, one can mention the field of stochastic integration \cite{ebrahimi2015flows,ebrahimi2015exponential}).

\begin{defi}
A Rota--Baxter (RB) algebra of weight $\theta$ is an associative algebra $A$ equipped with a linear endomorphism $R$ such that
$$\forall x,y\in A, R(x)R(y)=R(R(x)y+xR(y)+\theta xy).$$
It is a commutative Rota--Baxter algebra if it is commutative as an algebra.
\end{defi}

Setting $R':=R/\theta$ when $\theta \not=0$, one gets that the pair $(A,R')$ is a Rota--Baxter algebra of weight 1. This implies that, in practice, there are only two interesting cases to be studied abstractly: the weight 0 and weight 1 (or equivalently any other non zero weight). The others can be deduced easily from the weight 1 case. The same observation applies for one-parameter variants of the notion of quasi-shuffle algebras.

From now on in this article, \it RB algebra \rm will stand for \it RB algebra of weight 1\rm . When other RB algebras will be considered, their weight will be mentioned explicitely.

An important property of RB algebras, whose proof is left to the reader, is the existence of an associative product, the RB double product $\star$, defined by:
\begin{equation}
x\star y:=R(x)y+xR(y)+xy
\end{equation}
so that: $R(x)R(y)=R(x\star y)$.
If one sets, in a RB algebra, $x\prec y:=xR(y),\ x\succ y:=R(x)y$, one gets immediately relations such as 
$$(x\cdot y)\prec z=xyR(z)=x\cdot (y\prec z),$$
$$ (x\prec y)\prec z=xR(y)R(z)=x\prec (y\star z),$$
and so on. In the commutative case, $x\prec y=y\succ x$, 
and all relations between the products $\prec ,\succ , \cdot$ 
and $\star :=\prec +\succ +\cdot$ follow from these two. 
In the noncommutative case, the relations duplicate and one has furthermore $(x\succ y)\prec z=R(x)yR(z)=x\succ (y\prec z).$
These observations give rise to the axioms of quasi-shuffle algebras (or CTAs) and noncommutative quasi-shuffle  algebras (NQSh or tridendriform algebras)\cite{Loday}.

From now on, ``commutative algebra'' without other precision means commutative and associative algebra; ``product'' on a vector space $A$ means a bilinear product, that is a linear map from $A\otimes A$ to $A$.

\begin{defi}
A quasi-shuffle (QSh) algebra $A$ is a nonunital commutative algebra (with product written $\bullet$) equipped with another product $\prec$ such that
\begin{align}
\label{QS1}(x\prec y)\prec z&=x\prec(y\star z)\\
\label{QS2}(x\bullet y)\prec z&=x\bullet(y\prec z).
\end{align}
where $x\star y:=x\prec y+y\prec x+x\bullet y$. We also set for further use $x\succ y:=y\prec x$.
As the RB double product in a commutative RB algebra, the product $\star$ is automatically associative and commutative and defines another commutative algebra structure on $A$. \end{defi}

Recall, for further use, that \it shuffle algebras \rm correspond to weight 0 commutative RB algebras, that is quasi-shuffle algebras with a null product $\bullet =0$. Equivalently:
\begin{defi} A shuffle (Sh) algebra is a vector space equipped with a product $\prec$ satisfying (\ref{QS1}) with $x\star y:=x\prec y+y\prec x$, see e.g. \cite{schutz,KEFsmf,Foissy} for further details.
\end{defi}

It is sometimes convenient to equip quasi-shuffle algebras with a unit. The phenomenon is exactly similar to the case of shuffle algebras \cite{schutz}: given a quasi-shuffle algebra, one sets $B:=k\oplus A$, and the products $\prec$, $\bullet$ have a partial extension to $B$ defined by, for $x\in A$:
$$1\bullet x=x\bullet 1:=0,  \ 1\prec x:=0,\ x\prec 1:=x.$$
The products $1\prec 1$ and $1\bullet 1$ cannot be defined consistenly, but one sets $1\star 1:=1$, making $B$ a unital commutative algebra for $\star$.

The categories of quasi-shuffle and of unital quasi-shuffle algebras are clearly equivalent (under the operation of adding or removing a copy of the ground field).

\begin{defi}
A noncommutative quasi-shuffle   algebra (NQSh or tridendriform algebra) is a nonunital associative algebra (with product written $\bullet$) equipped with two other products $\prec, \succ$
such that, for all $x,y,z\in A$:
\begin{align}
\label{E1}(x\prec y)\prec z&=x\prec(y\star z)\\
\label{E2}(x\succ y)\prec z&=x\succ(y\prec z)\\
\label{E3}(x\star y) \succ z&=x\succ (y\succ z)\\
\label{E5}(x\prec y)\bullet z&=x\bullet (y\succ z)\\
\label{E6}(x \succ y)\bullet z&=x\succ (y\bullet z)\\
\label{E7}(x\bullet y)\prec z&=x\bullet (y\prec z).
\end{align}
where $x\star y:=x\prec y+x\succ y+x\bullet y$.
 \end{defi}
As the RB double product, the product $\star$ is automatically associative and equips $A$ with another associative algebra structure.
Indeed, the associativity relation 
\begin{equation}\label{E4}
(x\bullet y)\bullet z= x\bullet (y\bullet z)
\end{equation} and $(\ref{E1})+\ldots+(\ref{E7})$ imply the associativity of $\star$:
\begin{equation}
(x\star y)\star z=x\star (y\star z).
\end{equation}
If $A$ is furthermore a quasi-shuffle algebra, then the product $\star$ is commutative.
One can show that these properties are equivalent to the associativity of the double product $\star$ in a Rota-Baxter algebra (this is because the free NQSh algebras embed into the corresponding free Rota--Baxter algebras).

\it Noncommutative shuffle algebras \rm correspond to weight 0 RB algebras, that is NQSh algebras with a null product $\bullet =0$. Equivalently:
\begin{defi}
A noncommutative shuffle (NSh or dendriform) algebra is a vector space equipped with two products $\prec,\succ$ satisfying (\ref{E1},\ref{E2},\ref{E3}) with $x\star y:=x\prec y+y\prec x$. 
\end{defi}

The most classical example of such a structure is provided by the topologists' shuffle product and its splitting into two ``half-shuffles'' 
\cite{EM}.

As in the commutative case, it is sometimes convenient to equip NQSh algebras with a unit. Given a NQSh algebra, one sets $B:=k\oplus A$, and the products $\prec$, $\succ$, $\bullet$ have a partial extension to $B$ defined by, for $x\in A$:
$$1\bullet x=x\bullet 1:=0,  \ 1\prec x:=0,\ x\prec 1:=x, \ 1\succ x:=x, \ x\succ 1:=0.$$
The products $1\prec 1$, $1\succ 1$ and $1\bullet 1$ cannot be defined consistenly, but one sets $1\ast 1:=1$, making $B$ a unital commutative algebra for $\ast$.

The categories of NQSh and unital NQSh algebras are clearly equivalent.

\ \par

The following Lemma encodes the previously described relations between RB algebras and quasi-shuffle algebras:
\begin{lemma}
The identities $x\prec y:=xR(y),\ x\succ y:=R(x)y, x\bullet y:=xy$ induce a forgetful functor from RB algebras to NQSh algebras, resp. from commutative RB algebras to QSh algebras.
\end{lemma}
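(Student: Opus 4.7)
The plan is a direct, axiom-by-axiom verification: given a weight-$1$ Rota--Baxter algebra $(A,R)$, define $\prec,\succ,\bullet$ by the formulas of the lemma, let $\star:=\prec+\succ+\bullet$, and check that each of the seven tridendriform relations (\ref{E1})--(\ref{E7}) reduces either to associativity of the underlying product of $A$ or to the Rota--Baxter identity. The main observation is that, unpacking the definition, one has
\[
x\star y=xR(y)+R(x)y+xy,
\]
so that the Rota--Baxter relation is precisely $R(x)R(y)=R(x\star y)$. This single identity will absorb all the occurrences of ``$\star$'' on the right-hand side of (\ref{E1}) and (\ref{E3}); the remaining relations involve no $\star$ and follow purely from associativity.

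Concretely, I would work through the list as follows. For (\ref{E1}), $(x\prec y)\prec z=xR(y)R(z)$ and $x\prec(y\star z)=xR(y\star z)$, so the Rota--Baxter axiom gives equality. For (\ref{E3}), symmetrically, $(x\star y)\succ z=R(x\star y)z=R(x)R(y)z=x\succ(y\succ z)$. The four relations (\ref{E2}), (\ref{E5}), (\ref{E6}), (\ref{E7}) each become trivial consequences of the associativity of the ambient product of $A$: for example (\ref{E5}) reads $xR(y)\cdot z=x\cdot R(y)z$. Together with the associativity (\ref{E4}) of $\bullet$, which is nothing but associativity of the product on $A$, this establishes that $(A,\prec,\succ,\bullet)$ is a tridendriform algebra, and functoriality is clear because any morphism of Rota--Baxter algebras commutes with $R$ and with the multiplication, hence with the derived products.

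For the commutative statement, once $A$ is commutative the induced structure satisfies $x\prec y=xR(y)=R(y)x=y\succ x$, so the triple $(\prec,\succ,\bullet)$ collapses to a pair $(\prec,\bullet)$ with $\bullet$ commutative, which is exactly the data of a QSh algebra; the axioms (\ref{QS1}), (\ref{QS2}) are then obtained from (\ref{E1}) and (\ref{E7}) (the remaining tridendriform axioms become redundant under the symmetry $x\succ y=y\prec x$). Functoriality in the commutative case follows from the noncommutative one.

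There is no genuine obstacle here: every step is a one-line calculation, and the only non-formal ingredient used anywhere in the argument is the Rota--Baxter identity itself, applied twice. The substance of the lemma is really the conceptual point that the tridendriform axioms were designed precisely so as to hold in every Rota--Baxter algebra with these definitions; the verification is an unpacking rather than a computation.
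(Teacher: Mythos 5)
Your proposal is correct and follows exactly the route the paper takes: the lemma is stated after the paper has already sketched these very verifications (e.g.\ $(x\prec y)\prec z=xR(y)R(z)=x\prec(y\star z)$ and $(x\cdot y)\prec z=xyR(z)=x\cdot(y\prec z)$), with the Rota--Baxter identity $R(x)R(y)=R(x\star y)$ handling the two axioms involving $\star$ and plain associativity handling the rest. Nothing to add.
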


{\bf Remarks.} Let $A$ be a NQSh algebra. 
\begin{enumerate}
\item If $A$ is a commutative algebra (for the product $\bullet$) and if for $x,y\in A$: $x\prec y=y\succ x,$ we say that $A$ is commutative as a NQSh algebra. Then, $(A,\bullet,\prec)$ is a quasi-shuffle algebra.
\item We put $\preceq=\prec+\bullet$. Then $(\ref{E1})+(\ref{E5})+(\ref{E7})+(\ref{E4})$, $(\ref{E2})+(\ref{E7})$ and $(\ref{E3})$
give: 
\begin{align}
(x \preceq y)\preceq z&=x\preceq (y\preceq z+y\succ z),\\
(x \succ y)\preceq y&=x\succ (y\preceq z),\\
(x\preceq y+x\succ y)\succ z&=x\succ (y\succ z).
\end{align}
These are the axioms that define a noncommutative shuffle algebra structure $(A,\preceq,\succ)$ on $A$. Similarly, if  $\succeq=\succ+\bullet$, then $(A,\prec,\succeq)$ is a noncommutative shuffle algebra.
\end{enumerate}

\begin{exam}[Hoffman, \cite{Hoffman}] Let $V$ be an associative, non unitary algebra. The product of $v,w\in V$ is denoted by $v.w$. The augmentation ideal $T^+(V)=\bigoplus\limits_{n\in\N^\ast}V^{\otimes n}$
of the tensor algebra $T(V)=\bigoplus\limits_{n\in\N^\ast}T_n(V)=\bigoplus\limits_{n\in\N^\ast}V^{\otimes n}$ (resp. $T(V)$) is given  a unique (resp. unital) NQSh algebra structure by induction on the length of tensors such that for all $a,b \in V$, for all $v,w \in T(V)$:
\begin{align}
av\prec bw&=a(v\qshuffle bw),&
av \succ bw&=b(av \qshuffle w),&
av \bullet bw&=(a.b)(v \qshuffle w),
\end{align}
where $\qshuffle=\prec+\succ+\bullet$ is called the quasi-shuffle product on $T(V)$ (by definition: $\forall v\in T(V), 1\qshuffle v=v=v\qshuffle 1$). 

\begin{defi}
The four-tuple $(T^+(V),\prec,\succ,\bullet)$ is the \emph{tensor quasi-shuffle algebra}
associated to $V$. It is a NQSh algebra and is a quasi-shuffle algebra if, and only if, $(V,.)$ is commutative (and then is called simply the \emph{quasi-shuffle} algebra associated to $V$).
\end{defi}

Here are examples of products in $T^+(V)$. Let $a,b,c \in V$.
\begin{align*}
a\prec b&=ab,&a\succ b&=ba,&a\bullet b&=a.b,\\
a\prec bc&=abc,&a\succ bc&=bac+bca+b(a.c),&a\bullet bc&=(a.b)c,\\
ab \prec c&=abc+acb+a(b.c),&ab \succ c&=cab,&ab \bullet c&=(a.c)b. 
\end{align*}
In particular, the restriction of $\bullet$ to $V$ is the product of $V$. If the product of $V$ is zero, we obtain the usual shuffle product $\shuffle$.

A very useful observation, to which we will refer as "Sch\"utzenberger's trick" (see \cite{schutz}) is that, in $T^+(V)$, for $v_1,\dots,v_n\in V$,
\begin{equation}\label{schutztrick}
v_1\dots v_n=v_1\prec (v_2\prec \dots (v_{n-1}\prec v_n)\dots )).
\end{equation}
\end{exam}

\section{Quasi-shuffle bialgebras}\label{sect:3}
We recall that graded connected and more generally conilpotent bialgebras are automatically equipped with an antipode \cite{Cartier2}, so that the two notions of bialgebras and Hopf algebras identify when these conditions are satisfied --this will be most often the case in the present article.

Quasi-shuffle bialgebras are particular deformations of shuffle bialgebras associated to the exponential and logarithm maps. They were first introduced by Hoffman in \cite{Hoffman} and studied further in \cite{loday3,foissy2}. The existence of a natural isomorphism between the two categories of bialgebras is known as Hoffman's isomorphism \cite{Hoffman} and has been studied in depth in \cite{foissy2}.

We introduce here a theoretical approach to their definition, namely through the categorical notion of  Hopf operads, see \cite{Livernet}. 
The underlying ideas are elementary and deserve probably to be better known. We avoid using the categorical or operadic langage and present them simply (abstract definitions and further references on the subject are given in \cite{Livernet}).

Let us consider categories of binary algebras, that is algebras defined by one or several binary products satisfying homogeneous multilinear relations (i.e. algebras over binary operads). For example, commutative algebras are algebras equipped with a binary product $\cdot$ satisfying the relations $x\cdot (y\cdot z)=(x\cdot y)\cdot z$ and $x\cdot y=y\cdot x$, and so on. Multilinear means that letters should not be repeated in the defining relations: for example, $n$-nilpotent algebras defined by a binary product with $x^n=0,\ n>1$ are excluded.

The category of algebras will be said \it non-symmetric \rm if in the defining relations the letters $x,y,z...$ always appear in the same order. For example, the category ${\Comi}$ of commutative algebras is not non-symmetric because of the relation $x\cdot y=y\cdot x$, whereas ${\Asi}$, the one of associative algebras  ($x\cdot (y\cdot z)=(x\cdot y)\cdot z$) is. 

Notice that the categories $\mathit{Sh,\ QSh}$ of shuffle and quasi-shuffle algebras are not non-symmetric (respectively because of the relation $x\star y=x\prec y+y\prec x$ and because of the commutativity of the $\bullet$ product) and are equipped with a forgetful functor to ${\Comi}$. The categories $\mathit{NSh,\ NQSh}$ of noncommutative shuffle and quasi-shuffle algebras are non-symmetric (in their defining relations the letters $x,y,z$ are not permuted) and are equipped with a forgetful functor to ${\Asi}$. 

\begin{defi}
Let ${\mathit{C}}$ be a category of binary algebras. The category is said {\rm Hopfian} if tensor products of algebras in $\mathit C$ are naturally equipped with the structure of an algebra in $\mathit C$ (i.e. the tensor product can be defined internally to $\mathit C$). 
\end{defi}
Classical examples of Hopfian categories are ${\Comi}$ and ${\Asi}$.
\begin{defi}
A bialgebra in a Hopfian category of algebras ${\mathit{C}}$ (or $\mathit C$-bialgebra) is an algebra $A$ in ${\mathit{C}}$ equipped with a coassociative morphism to $A\otimes A$ in ${\mathit{C}}$.
\end{defi}
Equivalently, it is a coalgebra in the tensor category of ${\mathit{C}}$-algebras.

Further requirements can be made in the definition of bialgebras, for example when algebras have units. When ${\mathit{C}}={\Comi}$ or ${\Asi}$, we recover the usual definition of bialgebras.

\begin{prop}
A category of binary algebras equipped with a forgetful functor to $\mathit{Com}$ is Hopfian. In particular, $\mathit{Pois,Sh,QSh}$ are Hopfian.
\end{prop}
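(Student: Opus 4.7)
The key observation is that the forgetful functor $F \colon \mathit{C} \to \mathit{Com}$ endows each $A \in \mathit{C}$ with a distinguished commutative associative product $\cdot_A$, which by the very data of $F$ is a specific linear combination $\cdot_A = \sum_i c_i \ast_i$ of the generating binary operations $\ast_i$ of $\mathit{C}$. Given $A, B \in \mathit{C}$, the plan is to define, for each generating operation $\ast$ of $\mathit{C}$,
$$(a_1 \otimes b_1) \ast (a_2 \otimes b_2) \;:=\; (a_1 \ast a_2) \otimes (b_1 \cdot_B b_2),$$
extended bilinearly, and then to verify that $A \otimes B$ so equipped lies in $\mathit{C}$.

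First one checks consistency: the commutative product $\cdot$ induced on $A \otimes B$ by the same combination $\sum_i c_i \ast_i$ equals $(a_1 \otimes b_1) \cdot (a_2 \otimes b_2) = (a_1 \cdot_A a_2) \otimes (b_1 \cdot_B b_2)$, the standard commutative tensor-product structure, so that $F(A \otimes B)$ is the usual $F(A) \otimes F(B)$ in $\mathit{Com}$.

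The main step is to verify that every defining relation of $\mathit{C}$ is preserved. By induction on parenthesization, any iterated binary word $W$ in the generating operations evaluates on simple tensors as $W(a_1 \otimes b_1, \ldots, a_n \otimes b_n) = W(a_1, \ldots, a_n) \otimes (b_{\tau(1)} \cdot_B \cdots \cdot_B b_{\tau(n)})$, where the order $\tau$ is determined by the shape of $W$ (and is just the identity in the non-symmetric case). Associativity of $\cdot_B$ collapses all parenthesizations of the $B$-factor, and commutativity of $\cdot_B$ further makes it independent of $\tau$, so the $B$-factor reduces to the canonical $b_1 \cdot_B \cdots \cdot_B b_n$. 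A homogeneous multilinear relation $\sum_\lambda W_\lambda(x_{\sigma_\lambda(1)}, \ldots, x_{\sigma_\lambda(n)}) = 0$ of $\mathit{C}$ therefore evaluates on $A \otimes B$ as this common $B$-factor tensored with $\sum_\lambda W_\lambda(a_{\sigma_\lambda(1)}, \ldots, a_{\sigma_\lambda(n)})$, which vanishes because the relation holds in $A$.

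For the stated examples: $\mathit{Pois}$ has its Poisson commutative product; $\mathit{Sh}$ has $\star = \prec + \succ$; and $\mathit{QSh}$ has $\star = \prec + \succ + \bullet$. In each case $\star$ is commutative and associative by the defining axioms, giving the required forgetful functor to $\mathit{Com}$, so the general construction applies. The main conceptual point to watch is that commutativity of $\cdot_B$ is what allows the $B$-factor to be extracted when the relations of $\mathit{C}$ involve permutations of variables; this is precisely why a forgetful functor to $\mathit{Com}$ (rather than merely to $\mathit{As}$) is the correct hypothesis in the symmetric setting, whereas in the non-symmetric case bare associativity of $\cdot_B$ would already suffice.
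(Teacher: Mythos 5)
Your proof is correct and follows essentially the same route as the paper: the paper defines exactly the same tensor-product structure $\mu_i(a\otimes b,a'\otimes b'):=\mu_i(a,a')\otimes b\cdot b'$ and then simply asserts that the relations are preserved, whereas you supply the (correct) verification that associativity and commutativity of $\cdot_B$ collapse the $B$-factor of any multilinear relation to a common term. The only difference is that you make explicit the point the paper leaves implicit, namely that commutativity of $\cdot_B$ is what handles permuted variables in the symmetric setting.
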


Here ${\mathit{Pois}}$ stands for the category of Poisson algebras, studied in \cite{Livernet} from this point of view.

Indeed, let ${\mathit{C}}$ be a category of binary algebras equipped with a forgetful functor to ${\Comi}$. We write $\mu_1,\dots,\mu_n$ the various binary products on $A,B\in{\mathit{C}}$ and $\cdot$ the commutative product (which may be one of the $\mu_i$, or be induced by these products as the $\star$ product is induced by the $\prec, \succ$ and $\bullet$ products in the case of shuffle and quasi-shuffle algebras). Notice that a given category may be equipped with several distinct forgetful functors to ${\Comi}$: the quasi-shuffle algebras carry, for example, two commutative products ($\bullet$ and $\star$).

The Proposition follows by defining properly the $\mathit C$-algebra structure on the tensor products $A\otimes B$:
$$\mu_i(a\otimes b,a'\otimes b'):=\mu_i(a,a')\otimes b\cdot b'.$$
The new products $\mu_i$ on $A\otimes B$ clearly satisfy the same relations as the corresponding products on $A$, which concludes the proof. Notice that one could also define a ``right-sided'' structure by $\mu_i(a\otimes b,a'\otimes b'):=a\cdot a'\otimes \mu_i(b, b').$

A  bialgebra (without a unit) in the category of quasi-shuffle algebras is a bialgebra in the Hopfian category ${\QShi}$, where the Hopfian structure is induced by the $\star$ product. 
Concretely, it is a quasi-shuffle algebra $A$ equipped with a coassociative map $\Delta$ in ${\QShi}$ to $A\otimes A$, where the latter is equipped with a quasi-shuffle algebra structure by:
\begin{equation}\label{precbig}(a\otimes b)\prec (a'\otimes b')=(a\prec a')\otimes (b\star b'),
\end{equation}
\begin{equation}\label{dotbig}
(a\otimes b)\bullet (a'\otimes b')=(a\bullet a')\otimes (b\star b').\end{equation}
The same process defines the notion of shuffle bialgebra (without a unit), e.g. by taking a null $\bullet$ product in the definition.

Using Sweedler's shortcut notation $\Delta(a)=:a^{(1)}\otimes a^{(2)}$, one has:

\begin{equation}\label{qsbialgebra}
\Delta(a\prec b)=a^{(1)}\prec b^{(1)}\otimes a^{(2)}\star b^{(2)},\end{equation}
\begin{equation} \Delta(a\bullet b)=a^{(1)}\bullet b^{(1)}\otimes a^{(2)}\star b^{(2)}.
\end{equation}

In the unital case, $B=k\oplus A$, one requires furthermore that $\Delta$ be a counital coproduct (with $\Delta(1)=1\otimes 1$) and, since $1\prec 1$ and $1\bullet 1$ are not defined, sets:
$$(1\otimes b)\prec (1\otimes b')=1\otimes (b\prec b'),$$
$$(1\otimes b)\bullet (1\otimes b')=1\otimes (b\bullet b').$$
Since unital quasi-shuffle and shuffle bialgebras are more important for applications, we call them simply quasi-shuffle bialgebras and shuffle bialgebras. 
In this situation it is convenient to introduce the reduced coproduct on $A$,
$$\tdelta (a):=\Delta(a)-a\otimes 1-1\otimes a.$$ 
Concretely, we get:
\begin{defi}
The unital QSh algebra $k\oplus A$ equipped with counital coassociative coproduct $\Delta$ is a quasi-shuffle bialgebra if and only if
 for all $x,y\in A$ (we introduce for the reduced coproduct the Sweedler-type notation $\tdelta(x)=x'\otimes x''$):
\begin{equation}\label{qsbialgebra1}
\tdelta(x\prec y)=x'\prec y'\otimes x''\star y''+x'\otimes x''\star y+x\prec y'\otimes y''+x'\prec y\otimes x''+x\otimes y,
\end{equation}
\begin{equation}\label{qsbialgebra2}
\tdelta(x\bullet y)=x'\bullet y'\otimes x''\star y''+x'\bullet y\otimes x''+x\bullet y'\otimes y''.
\end{equation}
\end{defi}

The same constructions and arguments hold in the non-symmetric context. We do not repeat them and only state the conclusions.

\begin{prop}
 A non-symmetric category of binary algebras equipped with a forgetful functor to ${\Asi}$ is Hopfian. In particular, $\mathit{NSh}$ and ${\NQShi}$ are Hopfian.
\end{prop}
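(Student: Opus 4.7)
The plan is to replicate the strategy of the preceding proposition, but with the associative product from the forgetful functor to $\Asi$ playing the role previously played by the commutative product from the forgetful functor to $\Comi$. Let $\mathit{C}$ be a non-symmetric category of binary algebras with forgetful functor to $\Asi$, denote its binary operations by $\mu_1,\dots,\mu_n$ and write $\ast$ for the associative product obtained via the forgetful functor. On the tensor product of two $\mathit{C}$-algebras $A$ and $B$, I would define
\[
\mu_i(a\otimes b,\,a'\otimes b'):=\mu_i(a,a')\otimes b\ast b'.
\]

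The core of the argument is to check that these products satisfy the defining relations of $\mathit{C}$. Every such relation is, by hypothesis, a multilinear identity in which the variables $x_1,x_2,\dots$ occur in the same left-to-right order on both sides. Expanding any monomial $\mu_{i_1}(\mu_{i_2}(\dots),\dots)$ on elementary tensors yields a first tensor factor obtained by applying the very same $\mu_{i_k}$-tree to the first components $a_1,a_2,\dots$, and a second tensor factor which is some $\ast$-bracketing of the second components $b_1,b_2,\dots$ in the \emph{same} left-to-right order. The first factors on the two sides of the relation coincide by the hypothesis that $A$ (and $B$) is a $\mathit{C}$-algebra; the second factors coincide by associativity of $\ast$, since any two bracketings of $b_1\ast b_2\ast\dots\ast b_k$ agree. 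This is exactly the point where non-symmetry is used: if a defining relation permuted the letters, we would be forced to identify $b\ast b'$ with $b'\ast b$ and would need commutativity of $\ast$, which is not available in $\Asi$.

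For the two specific instances, the required forgetful functors are already implicit in the excerpt: for $\NShi$ the associative product is $\star:=\prec+\succ$, and for $\NQShi$ it is $\star:=\prec+\succ+\bullet$, whose associativity was recorded just after the definition of $\NQShi$. One may note in passing that, exactly as in the symmetric case, the ``right-sided'' assignment $\mu_i(a\otimes b,a'\otimes b'):=a\ast a'\otimes \mu_i(b,b')$ defines an equally valid Hopfian structure.

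I expect no genuine obstacle: the only thing to be careful about is the bookkeeping in the verification of the defining relations, which is entirely formal once one observes that non-symmetry precisely prevents the appearance of any subterm requiring commutativity of $\ast$. The argument is therefore fully parallel to the symmetric case, with $\Asi$ replacing $\Comi$ throughout.
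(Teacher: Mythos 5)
Your proposal is correct and follows essentially the same route as the paper, which simply transports the construction $\mu_i(a\otimes b,a'\otimes b'):=\mu_i(a,a')\otimes b\ast b'$ from the commutative case, with the associative product supplied by the forgetful functor to $\Asi$ in place of the commutative one, and observes that non-symmetry (letters in the same order on both sides of each relation) together with associativity of $\ast$ makes the second tensor factors agree. Your explicit remark on where non-symmetry is used is exactly the point the paper leaves implicit when it says the same arguments hold in the non-symmetric context.
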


A  bialgebra (without a unit) in the category of noncommutative quasi-shuffle (NQSh) algebras is a bialgebra in the Hopfian category ${\NQShi}$, where the Hopfian structure is induced by the $\star$ product. 
Concretely, it is a NQSh algebra $A$ equipped with a coassociative map $\Delta$ in ${\NQShi}$ to $A\otimes A$, where the latter is equipped with a NQSh algebra structure by:
\begin{equation}\label{precbig2}(a\otimes b)\prec (a'\otimes b')=(a\prec a')\otimes (b\star b'),
\end{equation}
\begin{equation}\label{succbig2}(a\otimes b)\succ (a'\otimes b')=(a\succ a')\otimes (b\star b'),
\end{equation}
\begin{equation}\label{dotbig2}
(a\otimes b)\bullet (a'\otimes b')=(a\bullet a')\otimes (b\star b').\end{equation}
The same process defines the notion of NSh (or dendriform) bialgebra (without a unit), e.g. by taking a null $\bullet$ product in the definition. 

Recall that setting $\preceq :=\prec+\bullet$ defines a forgetful functor from NQSh to NSh algebras. The same definition yields a forgetful functor from NQSh to NSh bialgebras.

In the unital case, one requires furthermore that $\Delta$ be a counital coproduct (with $\Delta(1)=1\otimes 1$) and sets
$$(1\otimes b)\prec (1\otimes b')=1\otimes (b\prec b'),$$
and similarly for $\succ$ and $\bullet$. Since this case is more important for applications, we call simply NQSh and NSh bialgebras the ones with a unit. 

\begin{defi}
The unital NQSh algebra $k\oplus A$ equipped with counital coassociative coproduct $\Delta$ is a NQSh bialgebra if and only if
 for all $x,y\in A$:
\begin{equation}\label{nqsbialgebra1}
\tdelta(x\prec y)=x'\prec y'\otimes x''\star y''+x'\otimes x''\star y+x\prec y'\otimes y''+x'\prec y\otimes x''+x\otimes y,
\end{equation}
\begin{equation}\label{nqsbialgebra2}
\tdelta(x \succ y)=x'\succ y'\otimes x''\star y''+y'\otimes x\star y''+x\succ y'\otimes y''+x'\succ y\otimes x''+y\otimes x,
\end{equation}
\begin{equation}\label{nqsbialgebra3}
\tdelta(x\bullet y)=x'\bullet y'\otimes x''\star y''+x'\bullet y\otimes x''+x\bullet y'\otimes y''.
\end{equation}
\end{defi}

Recall, for later use, that a NQSh bialgebra $k\oplus A$ is \it connected \rm if the reduced coproduct is locally conilpotent:
$$A=\bigcup_{n\geq 0} Ker(\tdelta^{(n)}),$$
where $\tdelta^{(n)}$ is the iterated coproduct of order $n$ (and $Ker(\tdelta^{(n)})$ is also denoted $Prim(A)$, the set of primitive elements)
and similarly for the other unital bialgebras we will consider.

The reason for the importance of the unital case comes from Hoffman's:
\begin{exam} Let $V$ be an associative, non unitary algebra. With the deconcatenation coproduct $\Delta$, defined by:
$$\Delta(x_1\ldots x_n)=\sum_{i=0}^{n} x_1\ldots x_i \otimes x_{i+1}\ldots x_n,$$
 the tensor quasi-shuffle algebra $T(V)$ is a NQSh bialgebra. When $V$ is commutative, it is a quasi-shuffle bialgebra.
\end{exam}

\section{Lie theory for quasi-shuffle bialgebras}\label{sect:4}
The structural part of Lie theory, as developed for example in Bourbaki's \it Groupes et Alg\`ebres de Lie \rm \cite{Bourbaki} and Reutenauer's monograph on free Lie algebras \cite{Reutenauer}, is largely concerned with the structure of enveloping algebras and cocommutative Hopf algebras. It was shown in \cite{Livernet} that many phenomena that might seem characteristic of Lie theory do actually generalize to other families of bialgebras -precisely the ones studied in the previous section, that is the ones associated with Hopfian categories of algebras equiped with a forgetful functor to ${\Comi}$ or ${\Asi}$.

The most natural way to study these questions is by working with twisted algebras over operads --algebras in the category of $\mathbf S$-modules (families of representations of all the symmetric groups $\S_n,\ n\geq 0$) or, equivalently, of functors from finite sets to vector spaces. However, doing so systematically requires the introduction of many terms and preliminary definitions (see \cite{Livernet}), and we prefer to follow here a more direct approach inspired by the theory of combinatorial Hopf algebras. The structures we are going to introduce are reminiscent of the Malvenuto--Reutenauer Hopf algebra \cite{MR}, whose construction can be deduced from the Hopfian structure of ${\Asi}$, see \cite{PR04,patras2006twisted,patras2008trees} and \cite[Exple 2.3.4]{Livernet}. The same process will allow us to contruct a combinatorial Hopf algebra structure on the operad ${\QSh}$ of quasi-shuffle algebras.

Recall that an algebraic theory such as the ones we have been studying (associative, commutative, quasi-shuffle, NQSh... algebras) is entirely characterized by the behaviour of the corresponding free algebra functor $F$: an analytic functor described by a sequence of symmetric group representation $\mathbf{F}_n$ (i.e. a $\mathbf S$-module)
so that, for a vector space $V$, $F(V)=\bigoplus\limits_n\mathbf{F}_n\otimes_{\S_n}V^{\otimes n}.$ Composition of operations for $F$-algebras are encoded by natural transformations from $F\circ F$ to $F$. By a standard process, this defines a monad, and $ F$-algebras are the algebras over this monad.
The direct sum $\mathbf{F}=\bigoplus\limits_n\mathbf{F}_n$ equipped with the previous (multilinear) composition law is called an operad, and $F$-algebras are algebras over this operad.
Conversely, the $\mathbf{F}_n$ are most easily described as the multilinear part of the free $F$-algebras $F(X_n)$ over the vector space spanned by a finite set with $n$ elements, $X_n:=\{x_1,\dots,x_n\}$. Here, multilinear means that ${\mathbf F}_n$ is the intersection of the $n$ eigenspaces associated to the eigenvalue $\lambda$ of the $n$ operations induced on $F(X_n)$ by the map that scales $x_i$ by $\lambda$ (and acts as the identity on the $x_j,\ j\not= i$).

Let $X$ be a finite set, and let us anticipate on the next Lemma and write $QSh(X):=T^+(k[X]^+)$ for the quasi-shuffle algebra associated to $k[X]^+$, the (non unital, commutative) algebra of polynomials without constant term over $X$. For $I$ a multiset over $X$, we write $x_I$ the associated monomial 
(e.g. if $I=\{x_1,x_3,x_3\}$, $x_I=x_1x_3^2$). The tensors $x_{I_1}\dots x_{I_n}=x_{I_1}\otimes \dots \otimes x_{I_n}$ form a basis of $QSh(X)$.

There are several ways to show that $QSh(X)$ is the free quasi-shuffle algebra over $X$: the property can be deduced from the classical constructions of commutative Rota-Baxter algebras by Cartier \cite{Cartier} or Rota \cite{Rota1,Rota2} (indeed the tensor product $x_{I_1}\dots x_{I_n}$ corresponds to the Rota--Baxter monomial $x_{I_1}R(x_{I_2}R(x_{I_3}\dots R( x_{I_n})\dots )))$ in the free RB algebra over $X$). It can be deduced from the construction of the free shuffle algebra over $X$ by standard filtration/graduation arguments. It can also be deduced from a Schur functor argument \cite{loday3}.
The simplest proof is but the one due to Sch\"utzenberger for shuffle algebras that applies almost without change to quasi-shuffle algebras \cite[p. 1-19]{schutz}. 

\begin{lemma}
The quasi-shuffle algebra $QSh(X)$ is the (unique up to isomorphism) free quasi-shuffle algebra over $X$.
\end{lemma}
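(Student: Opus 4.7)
The plan is to follow Schützenberger's original treatment of shuffle algebras, which adapts almost verbatim to the quasi-shuffle setting. The argument splits into uniqueness and existence of the universal factorization.

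For uniqueness, given any QSh-morphism $\varphi: QSh(X) \to A$ extending a set map $f: X \to A$, I show $\varphi$ is determined by $f$ on a spanning family. The tensors $x_{I_1}\cdots x_{I_n}$ form a basis of $QSh(X)$ and can be rewritten entirely in terms of $X$ using $\bullet$ and $\prec$: each monomial $x_I \in k[X]^+ = V$, viewed as an element of $V^{\otimes 1} \subset T^+(V)$, equals the $\bullet$-product of the variables it contains (the embedding $V \hookrightarrow T^+(V)$ is a morphism for the restriction of $\bullet$, as observed in Hoffman's example), while Schützenberger's identity (\ref{schutztrick}) expresses the full tensor as $x_{I_1} \prec (x_{I_2} \prec \cdots \prec x_{I_n})$. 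Hence $X$ generates $QSh(X)$ as a QSh algebra and $\varphi$ is fully determined by $f$.

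For existence, given $f: X \to A$, I would first extend $f$ uniquely to a morphism of commutative non-unital algebras $\bar{f}: (k[X]^+, \cdot) \to (A, \bullet)$; this exists by freeness of $k[X]^+$ as a commutative algebra together with the commutativity of the $\bullet$ product on the QSh algebra $A$. I would then define $\tilde{f}: QSh(X) \to A$ on the tensor basis by
$$\tilde{f}(x_{I_1}\cdots x_{I_n}) := \bar{f}(x_{I_1}) \prec \bigl(\bar{f}(x_{I_2}) \prec \cdots \prec \bar{f}(x_{I_n})\bigr)$$
and extend linearly. By construction $\tilde{f}$ agrees with $f$ on $X$ and, by Schützenberger's identity, with $\bar{f}$ on $V \subset QSh(X)$.

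The main obstacle is verifying that $\tilde{f}$ is indeed a morphism of QSh algebras, i.e.\ that it intertwines both $\prec$ and $\bullet$. This is a simultaneous induction on the total length $n+m$ of the two tensor arguments, using the recursive Hoffman formulas $av \prec bw = a(v \qshuffle bw)$ and $av \bullet bw = (a \cdot b)(v \qshuffle w)$ in $T^+(V)$, together with the QSh axioms (\ref{QS1}) and (\ref{QS2}) in the target $A$. One must prove in parallel the three identities
$$\tilde{f}(u \circ w) = \tilde{f}(u) \circ \tilde{f}(w), \qquad \circ \in \{\prec, \bullet, \star\},$$
since each of $\prec$, $\bullet$ and their sum $\star = \qshuffle$ appears on the right-hand side of the others' recursive definitions, so they cannot be treated independently. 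The inductive step unfolds the leftmost tensor factor, applies the $T^+(V)$-recursion, invokes the induction hypothesis on strictly shorter tensors, and regroups the result in $A$ using (\ref{QS1}) and (\ref{QS2}) to produce the required form. The commutativity of $\bullet$ on both $V$ and $A$ ensures consistency with the fact that $\bar{f}$ is a commutative algebra morphism on monomials $x_I$. The bookkeeping is delicate but mechanical; no ingredient beyond the QSh axioms and Hoffman's recursions is needed.
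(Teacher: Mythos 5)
Your proposal is correct, but it takes a genuinely different route from the paper's. The paper proves freeness by a normal-form argument: it first shows that \emph{any} quasi-shuffle algebra generated by $X$ is linearly spanned by the normed terms $x_{I_1}\prec(x_{I_2}\prec\cdots(x_{I_{n-1}}\prec x_{I_n})\cdots)$ (a rewriting recursion using (\ref{QS1}) and (\ref{QS2})), and then notes that in $QSh(X)$ these normed terms are, by Sch\"utzenberger's trick (\ref{schutztrick}), precisely the basis tensors $x_{I_1}\cdots x_{I_n}$; the canonical surjection from the (abstractly existing) free algebra therefore carries a spanning family onto a basis and is an isomorphism. You instead verify the universal property head-on: you write down the candidate extension $\tilde f$ in normed form and check that it is a morphism by a simultaneous induction on $\prec$, $\bullet$ and $\star$. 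The axiom-pushing is thus relocated: the paper does it inside an arbitrary generated algebra (the spanning claim), you do it inside the concrete model $T^+(k[X]^+)$ (the morphism check), and the cost is comparable --- your $\prec$ step closes with (\ref{QS1}) alone, and your $\bullet$ step with (\ref{QS2}) together with the commutativity of $\bullet$ and of $\star$, exactly as you indicate. What your route buys is independence from the general existence of free objects in the variety of quasi-shuffle algebras, plus an explicit closed form for the universal morphism; what the paper's route buys is brevity, since once that existence is granted only the two facts ``normed terms span every generated algebra'' and ``normed terms are independent in the model'' are needed. Your uniqueness argument (generation of $QSh(X)$ by $X$ via $\bullet$-products of variables and iterated $\prec$) is sound, and the inductive bookkeeping you defer does indeed close up as claimed.
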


Indeed, let $A$ be an arbitrary quasi-shuffle algebra generated by $X$. Then, one checks easily by a recursion using the defining relations of quasi-shuffle algebras that every $a\in A$ is a finite sum of ``normed terms'', that is terms of the form
$$x_{I_1}\prec (x_{I_2}\prec (x_{I_3}\dots \prec x_{I_n})\dots ).$$
But, if $A=QSh(X)$, by the Sch\"utzenberger's trick,
$x_{I_1}\prec (x_{I_2}\prec (x_{I_3}\dots \prec x_{I_n})\dots )=x_{I_1}\dots x_{I_n}$; the result follows from the fact that these terms form a basis of $QSh(X)$.

\begin{cor}The component ${\QSh}_n$  of the operad ${\QSh}$ identifies therefore with the linear span of tensors $x_{I_1}\dots x_{I_k}$, where $I_1\coprod \dots \coprod I_k=[n]$. 
\end{cor}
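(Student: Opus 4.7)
The plan is to apply the previous lemma, which identifies the free quasi-shuffle algebra on $X_n = \{x_1, \dots, x_n\}$ with $QSh(X_n) = T^+(k[X_n]^+)$, and then extract its multilinear component.

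First, I would recall that, by the general definition given just above the statement, the arity $n$ part ${\QSh}_n$ of the operad is the multilinear component of the free quasi-shuffle algebra on $n$ generators, namely the intersection of the eigenspaces of eigenvalue $\lambda_1 \cdots \lambda_n$ of the $n$ commuting operators on $QSh(X_n)$ induced by the rescalings $x_i \mapsto \lambda_i x_i$ (and identity on the other generators).

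Next, I would invoke the explicit basis of $QSh(X_n)$ given right before the lemma: tensors of the form $x_{I_1}\cdots x_{I_k}$, where each $I_j$ is a nonempty multiset over $X_n$ and $x_{I_j}$ denotes the corresponding monomial in $k[X_n]^+$. Each such basis element is itself an eigenvector of every rescaling $x_i \mapsto \lambda_i x_i$, with eigenvalue $\prod_i \lambda_i^{m_i}$, where $m_i$ is the total number of occurrences of $x_i$ across the multisets $I_1,\dots,I_k$. Consequently, such a basis element lies in ${\QSh}_n$ if and only if $m_i = 1$ for every $i\in[n]$.

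The condition $m_i = 1$ for all $i$ is equivalent to requiring that each $I_j$ be a set (no repeated letter) and that the $I_j$'s be pairwise disjoint with $I_1 \coprod \cdots \coprod I_k = [n]$. This gives the desired description of ${\QSh}_n$ as the linear span of those tensors $x_{I_1}\cdots x_{I_k}$ indexed by ordered set partitions of $[n]$. No step requires genuine work beyond unwinding the definitions; the only point to be careful about is distinguishing the role of multisets (which index a basis of $QSh(X_n)$ in general) from that of sets forming an ordered partition (which index the multilinear part), and this is immediate once the eigenvalue computation has been written down.
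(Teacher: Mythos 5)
Your argument is correct and is exactly the one the paper intends: the corollary is stated as an immediate consequence of the lemma, obtained by applying the paper's definition of the multilinear part (intersection of eigenspaces of the rescaling operators) to the explicit basis of tensors of monomials in $QSh(X_n)$, and your eigenvalue computation correctly isolates the tensors indexed by ordered set partitions of $[n]$. No divergence from the paper's (implicit) proof.
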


Let us introduce useful notations. 
We write $x_{\mathcal I}:=x_{I_1}\dots x_{I_k}$, where $\mathcal I$ denotes an arbitrary ordered sequence of disjoint subsets of $\mathbf{N}^\ast$, $I_1, \dots , I_k$, and set $|\mathcal I|:=|I_1|+\dots +|I_k|$. 
Recall that the standardization map associated to a subset $I=\{i_1,\dots,i_n\}$ of $\mathbf N^\ast$, where $i_1<\dots <i_n$ is the map $st$ from $I$ to $[n]$ defined by: $st(i_k):=k.$ The \it standardization \rm of $\mathcal I$ is then the ordered sequence $st(\mathcal I):= st(I_1,\dots ,I_k)$, where $st$ is the standardization map associated to the subset 
$I_1\coprod\dots \coprod I_k$ of the integers.
We also set $st(x_{\mathcal I}):=x_{st(\mathcal I)}$. For example, if $\mathcal I=\{2,6\},\{5,9\}$, $st({\mathcal I})=\{1,3\},\{2,4\}$ and
$st(x_{\mathcal I})=x_1x_3\otimes x_2x_4$.
The shift by $k$ of a subset $I=\{i_1,\dots,i_n\}$ (or a sequence of subsets, and so on...) of $\mathbf N^\ast$, written $I+k$, is defined by $I+k:=\{i_1+k,\dots,i_n+k\}$.

\begin{theo}\label{Qshbialg}
The operad ${\QSh}$ of quasi-shuffle algebras inherits from the Hopfian structure of its category of algebras a NQSh bialgebra structure whose product operations are defined by:
$$x_{\mathcal I}\prec x_{\mathcal J}:=x_{\mathcal I}\prec_f x_{\mathcal J+n},$$
$$x_{\mathcal I}\succ x_{\mathcal J}:=x_{\mathcal I}\succ_f x_{\mathcal J+n},$$
$$x_{\mathcal I}\bullet x_{\mathcal J}:=x_{\mathcal I}\bullet_f x_{\mathcal J+n},$$
where $\mathcal I$ and $\mathcal J$ run over ordered partitions of $[n]$ and $[m]$;
the coproduct is defined by:
$$\Delta(x):=(st\otimes st)\circ\Delta_f (x),$$
where, on the right-hand sides, $\prec_f,\succ_f,\bullet_f, \Delta_f$ stand for the corresponding operations on $QSh(\mathbf N^\ast)$ (where, as usual, $x\prec_f y=:y\succ_f x$).
\end{theo}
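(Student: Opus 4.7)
The plan is to exploit the fact that the ambient space $QSh(\mathbf{N}^\ast)=T^+(k[\mathbf{N}^\ast]^+)$ is itself a NQSh bialgebra (by the preceding example, since $k[\mathbf{N}^\ast]^+$ is commutative and associative) and to view $\QSh=\bigoplus_n\QSh_n$ as a graded linear subspace of it via the inclusion of $\QSh_n$ as the span of $x_\mathcal{I}$ with $\mathcal{I}$ an ordered partition of $[n]$. This subspace is not stable under the ambient operations $\prec_f,\succ_f,\bullet_f,\Delta_f$; the formulas in the theorem correct for this by shifting arguments so as to give pairwise disjoint labels before taking products and by standardizing the two pieces of a deconcatenation back into $\QSh$.

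First I would verify well-definedness. Direct inspection of Hoffman's formulas shows that $\prec_f,\succ_f,\bullet_f$ applied to two tensors whose label sets are the disjoint blocks $[n]$ and $\{n+1,\dots,n+m\}$ produce sums of tensors $x_\mathcal{K}$ with $\mathcal{K}$ an ordered partition of $[n+m]$, hence they land in $\QSh_{n+m}$. Similarly, $\Delta_f(x_\mathcal{I})$ splits $\mathcal{I}$ at each position into two consecutive runs indexed by complementary subsets of $[n]$, which after standardization give ordered partitions of $[i]$ and $[n-i]$, so $\Delta(x_\mathcal{I})\in\bigoplus_i \QSh_i\otimes\QSh_{n-i}$.

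Next I would check the NQSh algebra axioms and coassociativity. The shift $\sigma_n\colon x_\mathcal{J}\mapsto x_{\mathcal{J}+n}$ is a NQSh morphism onto its image, so that each axiom \eqref{E1}--\eqref{E7} in $\QSh$ reduces, after expanding the defining shifts, to the corresponding axiom in $QSh(\mathbf{N}^\ast)$ applied to elements with pairwise disjoint labels; for example $(x_\mathcal{I}\prec x_\mathcal{J})\prec x_\mathcal{K}=(x_\mathcal{I}\prec_f x_{\mathcal{J}+n})\prec_f x_{\mathcal{K}+n+m}=x_\mathcal{I}\prec_f(x_{\mathcal{J}+n}\star_f x_{\mathcal{K}+n+m})=x_\mathcal{I}\prec(x_\mathcal{J}\star x_\mathcal{K})$. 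Coassociativity follows from the observation that $\tilde\Delta^{(k-1)}$ on $\QSh$ unfolds into a sum over decompositions of $\mathcal{I}$ into $k$ consecutive runs with each run standardized independently, a description manifestly symmetric under re-bracketing.

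The main technical obstacle lies in the bialgebra compatibilities \eqref{nqsbialgebra1}--\eqref{nqsbialgebra3}. The strategy is to apply the corresponding NQSh bialgebra identities already known to hold for $\Delta_f$ in $QSh(\mathbf{N}^\ast)$ to $x_\mathcal{I}\prec_f x_{\mathcal{J}+n}$, and then apply $st\otimes st$. The real work lies in matching each resulting summand of the form $x_\mathcal{A}\prec_f x_{\mathcal{B}+n}\otimes x_\mathcal{C}\star_f x_{\mathcal{D}+n}$ (with $\mathcal{A},\mathcal{C}$ splitting $\mathcal{I}$ and $\mathcal{B},\mathcal{D}$ splitting $\mathcal{J}$) against a term on the right-hand side of \eqref{nqsbialgebra1}. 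The key calculational fact is that $st$ commutes with $\prec_f,\succ_f,\bullet_f,\star_f$ when applied to tensors whose labels form two blocks lying in disjoint intervals (so that shift and standardization collapse to a single operation); this identifies each standardized summand with the shifted $\prec$ or $\star$ product in $\QSh$. The analogous argument for $\succ$ and $\bullet$ goes through verbatim, completing the proof.
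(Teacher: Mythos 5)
Your proposal is correct and follows essentially the same route as the paper: the NQSh algebra axioms are transported from the ambient structure on $QSh(\mathbf N^\ast)$ via the shifts (using that the category of NQSh algebras is non-symmetric), and the compatibility with the deconcatenation coproduct rests on exactly the paper's key lemma, namely that standardization commutes with $\prec_f,\succ_f,\bullet_f,\star_f$ on tensors whose label sets lie in two disjoint increasing blocks. You simply spell out in more detail what the paper compresses into a reference to the Hopf-operad result of Livernet plus that lemma.
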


The link with the Hopfian structure of the category of quasi-shuffle algebras refers to \cite[Thm 2.3.3]{Livernet}: any connected Hopf operad is a twisted Hopf algebra over this operad. The Theorem \ref{Qshbialg} can be thought of as a reformulation of this general result in terms of NQSh bialgebras.

The fact that ${\QSh}$ is a NQSh algebra follows immediately from the fact that $QSh(\mathbf N^\ast)$ is a NQSh algebra for $\prec_f,\succ_f,\bullet_f$, together with the fact that the category of NQSh algebras is non-symmetric.
The coalgebraic properties and their compatibility with the NQSh algebra structure are less obvious and follow from the following Lemma (itself a direct consequence of the definitions):

\begin{lemma}
Let $\mathcal I =I_1,\dots ,I_k$ and $\mathcal J=J_1,\dots ,J_l$ be two ordered sequence of disjoint subsets of $\mathbf{N}^\ast$  that for any $n\in \mathcal I_p,\ p\leq k$ and any $m\in \mathcal J_q,\ q\leq l$ we have $n<m$. Then:
$$st(x_{\mathcal I}\prec_f x_{\mathcal J})=x_ {st({\mathcal I})}\prec_f x_{st(\mathcal J)+|\mathcal I|}=x_ {st({\mathcal I})}\prec x_{st(\mathcal J)},$$
$$st(x_{\mathcal I}\succ_f x_{\mathcal J})=x_ {st({\mathcal I})}\succ_f x_{st(\mathcal J)+|\mathcal I|}=x_ {st({\mathcal I})}\succ x_{st(\mathcal J)},$$
$$st(x_{\mathcal I}\bullet_f x_{\mathcal J})=x_ {st({\mathcal I})}\bullet_f x_{st(\mathcal J)+|\mathcal I|}=x_ {st({\mathcal I})}\bullet x_{st(\mathcal J)}.$$
\end{lemma}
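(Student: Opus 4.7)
The plan is to observe that the Hoffman operations $\prec_f,\succ_f,\bullet_f$ and the quasi-shuffle product $\qshuffle$ on $QSh(\mathbf{N}^\ast)=T^+(k[\mathbf{N}^\ast]^+)$ are defined by a recursion that only performs three elementary operations on letters $x_I$: concatenation, interleaving, and multiplication in $V=k[\mathbf{N}^\ast]^+$ (which merges $x_I\cdot x_J=x_{I+J}$, the multiset-sum). Consequently, if one writes $x_{\mathcal I}\prec_f x_{\mathcal J}=\sum_{\mathcal K} c_{\mathcal K}\, x_{\mathcal K}$, then the multiset of integers appearing in the letters of each $\mathcal K$ is exactly $I_1\cup\dots\cup I_k\cup J_1\cup\dots\cup J_l$, and the integers originating in $\mathcal I$ are never merged with those originating in $\mathcal J$ (they occupy distinct letters). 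The same holds for $\succ_f$ and $\bullet_f$.

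Next I would exploit the hypothesis that every integer appearing in $\mathcal I$ is smaller than every integer appearing in $\mathcal J$. Under this hypothesis the standardization map $st$ attached to the union $I_1\cup\dots\cup I_k\cup J_1\cup\dots\cup J_l$ factors explicitly: it coincides with $st_{\mathcal I}$ on the $\mathcal I$-integers (with target $[|\mathcal I|]$), and with $|\mathcal I|+st_{\mathcal J}$ on the $\mathcal J$-integers (with target $\{|\mathcal I|+1,\dots,|\mathcal I|+|\mathcal J|\}$). In other words, $st$ is a relabelling that treats $\mathcal I$- and $\mathcal J$-entries independently up to the shift by $|\mathcal I|$ on the second block.

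Combining the two observations finishes the proof: since relabelling commutes with concatenation, interleaving and multiset-sum (applied only to letters whose supports are either entirely in $\mathcal I$ or entirely in $\mathcal J$), one obtains term-by-term, by a straightforward induction on $k+l$ following Hoffman's defining recursion,
\[
st\bigl(x_{\mathcal I}\prec_f x_{\mathcal J}\bigr)=x_{st(\mathcal I)}\prec_f x_{st(\mathcal J)+|\mathcal I|},
\]
and likewise for $\succ_f$ and $\bullet_f$. The rightmost equalities in the statement of the lemma are then precisely the definition of $\prec,\succ,\bullet$ on the operad $\QSh$ given in Theorem \ref{Qshbialg}. The only mildly delicate point is to verify that the $\bullet_f$ step of the recursion (which merges a leading letter $x_I$ from one side with a leading letter $x_J$ from the other into $x_{I+J}$) is compatible with $st$; but since $I\subseteq \mathcal I$ and $J\subseteq \mathcal J$ have disjoint supports by hypothesis, $st(I+J)=st_{\mathcal I}(I)+(st_{\mathcal J}(J)+|\mathcal I|)$, and this is exactly the merge performed on the right-hand side.
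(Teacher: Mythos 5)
Your verification is correct and is exactly the ``direct consequence of the definitions'' that the paper invokes without writing out: the Lemma is stated right after the definitions of $st$ and of the operations on $\QSh$, with no further argument given, and your factorization of $st$ into $st_{\mathcal I}$ on the first block and $st_{\mathcal J}+|\mathcal I|$ on the second is the intended observation. One caveat on wording only: your first paragraph's claim that integers originating in $\mathcal I$ and $\mathcal J$ ``are never merged'' and ``occupy distinct letters'' is literally false, since the $\bullet_f$ part of Hoffman's recursion does produce letters $x_{I\cup J}$ mixing the two sides; but your final paragraph identifies and correctly handles exactly this case (disjointness of supports plus order-preservation of $st$), so the argument as a whole stands.
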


The Hopf algebra ${\QSh}$ is naturally isomorphic with $\mathbf{WQSym}$, the Chapoton-Hivert Hopf algebra of word quasi-symmetric functions, that has been studied in \cite{novelli2,foissy2}, also in relation to quasi-shuffle algebras, but from a different point of view.

\ \par

Let us conclude this section by some insights on the "Lie theoretic" structure underlying the previous constructions on ${\QSh}$ (where "Lie theoretic" refers concretely to the behaviour of the functor of primitive elements in a class of bialgebras associated to an Hopfian category with a forgetful functor to ${\Asi}$ or ${\Comi}$).
Recall that there is a forgetful functor from quasi-shuffle algebras to commutative algebras defined by keeping only the $\bullet$ product. Dualy, the operad $\Com$ embeds into the operad ${\QSh}$: $\Com_n$ is the vector space of dimension 1 generated by the monomial $x_1\dots x_n$, and through the embedding into ${\QSh}$ this monomial is sent to the monomial (a tensor of length 1) $x_1^{\bullet n}:=x_1\bullet\dots\bullet x_1$ in $\QSh$ viewed as a NQSh algebra. Let us write slightly abusively $\Com$ for the image of $\Com$ in ${\QSh}$, we have, by definition of the coproduct on ${\QSh}$:

\begin{theo}
The operad $\Com$ embeds into the primitive part of the operad ${\QSh}$ viewed as a NQSh bialgebra. Moreover, the primitive part of ${\QSh}$ is stable under the $\bullet$ product.
\end{theo}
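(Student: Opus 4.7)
The plan is to treat the two assertions separately. Both will follow directly from the coproduct formula $\Delta = (st \otimes st) \circ \Delta_f$, where $\Delta_f$ denotes the deconcatenation on $QSh(\mathbf N^\ast)$, together with the compatibility axiom $(\ref{nqsbialgebra3})$.

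For the first claim I would use that the image of the generator $x_1 \cdots x_n \in \Com_n$ under the embedding $\Com \hookrightarrow \QSh$ is a single length-one tensor $\omega_n$. This identification is legitimate because $\bullet$ is commutative and associative on $\QSh$, a consequence of the commutativity of the underlying algebra $k[\mathbf N^\ast]^+$ which makes $QSh(\mathbf N^\ast)$ a genuine quasi-shuffle (and not merely NQSh) algebra; iterated $\bullet$-products therefore collapse to a single block without ambiguity. The deconcatenation of any length-one tensor $z \in \QSh_n$ has only its two boundary terms,
\[
\Delta_f(z) = 1 \otimes z + z \otimes 1,
\]
and applying $st \otimes st$ preserves this shape. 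Hence $\tdelta(z)=0$ and every such $z$ is primitive, so the image of $\Com$ lies in the primitive part of $\QSh$. Injectivity is then immediate since each $\Com_n$ is one-dimensional and its generator has a nonzero image.

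For the stability of the primitive part under $\bullet$, I would simply invoke $(\ref{nqsbialgebra3})$:
\[
\tdelta(x \bullet y) = x' \bullet y' \otimes x'' \star y'' + x' \bullet y \otimes x'' + x \bullet y' \otimes y''.
\]
Each of the three terms on the right-hand side involves either $x', x''$ from $\tdelta(x)$ or $y', y''$ from $\tdelta(y)$. Hence whenever $x$ and $y$ are both primitive all three terms vanish, so $x \bullet y$ is primitive as well.

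The only step that requires genuine care is identifying the image of the generator of $\Com_n$ as a single length-one tensor: without the commutativity of $\bullet$ on $\QSh$ the various orderings of $x_1, \ldots, x_n$ under $\bullet$ could a priori disagree, and the embedding would not be well defined at the level of operad components. Once this compatibility is pinned down by using that $k[\mathbf N^\ast]^+$ is commutative, both primitivity statements reduce to reading off the formulas for $\tdelta$.
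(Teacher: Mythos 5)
Your proof is correct and takes essentially the same route as the paper: the embedding statement is read off from the deconcatenation coproduct on length-one tensors, and the stability statement follows from the $\bullet$-compatibility of the reduced coproduct, where your appeal to formula (\ref{nqsbialgebra3}) is just the packaged form of the relations $1\bullet x=x\bullet 1=0$ that the paper cites directly.
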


Only the last sentence needs to be proved. It follows from the relations:
$$1\bullet x=x\bullet 1=0$$
for $x\in {\QSh}_n,\ n\geq 1$.

From the point of view of $\mathbf S$-modules, the Theorem should be understood in the light of \cite[Thm 2.4.2]{Livernet}: for $\mathbf P$ a connected Hopf operad, the space of primitive elements of the twisted Hopf $ P$-algebra $\mathbf P$ is a sub-operad of $\mathbf P$.

\ \par 

As usual in categories of algebras a forgetful functor such as the one from ${\QShi}$ to ${\Comi}$ induced by $\bullet$ has a left adjoint, see e.g. \cite{GJ94} for the general case and \cite{loday3} for quasi-shuffle algebras. 
This left adjoint, written $U$ (by analogy with the case of classical enveloping algebras: $U(A)\in {\QShi}$ for $A\in {\Comi}$ equipped with a product written $\cdot$) is, up to a canonical isomorphism, the quotient of the free quasi-shuffle over the vector space $A$ by the relations $a\bullet b=a\cdot b$.
When the initial category is Hopfian, such a forgetful functor to a category of algebras over a naturally defined sub-operad arises from the properties of the tensor product of algebras in the initial category, see \cite[Thm 2.4.2 and Sect. 3.1.2]{Livernet} --this is exactly what happens with the pair $({\Asi},{\mathit{Lie}})$ in the classical situation where the left adjoint is the usual enveloping algebra functor, and here for the pair $({\QShi},{\Comi})$.

\begin{lemma}[Quasi-shuffle PBW theorem]
The left adjoint $U$ of the forgetful functor from ${\QShi}$ to ${\Comi}$, or "quasi-shuffle enveloping algebra" functor from ${\Comi}$ to ${\QShi}$, is (up to isomorphism) Hoffman's quasi-shuffle algebra functor $T^+$.
\end{lemma}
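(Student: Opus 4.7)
The plan is to establish the universal property that identifies $T^+$ with the left adjoint $U$ (up to unique natural isomorphism). Concretely, for any commutative non-unital algebra $V$, any quasi-shuffle algebra $A$, and any morphism of commutative algebras $f\colon V \to (A,\bullet)$, I want to show that there exists a unique quasi-shuffle morphism $\bar f\colon T^+(V) \to A$ whose restriction to $V = T_1(V) \subset T^+(V)$ equals $f$. The first step is the observation that the degree-$1$ inclusion $\iota_V\colon V \hookrightarrow T^+(V)$ is itself a morphism of commutative algebras, since Hoffman's formula gives $a \bullet b = a\cdot b$ for $a,b \in V$ inside $T^+(V)$; this provides the unit of the putative adjunction. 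Uniqueness of the extension is then immediate from Sch\"utzenberger's trick (\ref{schutztrick}): every pure tensor $v_1 \cdots v_n$ in $T^+(V)$ equals the normed term $v_1 \prec (v_2 \prec (\cdots \prec v_n)\cdots)$, so any quasi-shuffle morphism extending $f$ is forced to send it to $f(v_1) \prec (f(v_2) \prec (\cdots \prec f(v_n))\cdots)$, leaving no room for choice.

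For existence, I would define $\bar f$ on pure tensors by the formula forced by uniqueness and extend multilinearly. The main obstacle is then to verify that $\bar f$ respects each of the three operations $\prec$, $\succ$, $\bullet$ (equivalently $\prec$ and $\bullet$, in the commutative case). I would handle this by a double induction on the lengths of the two tensor arguments, using on the $T^+(V)$ side the inductive Hoffman recursions
\begin{align*}
av \prec bw &= a(v \qshuffle bw), & av \succ bw &= b(av \qshuffle w), & av \bullet bw &= (a\cdot b)(v \qshuffle w),
\end{align*}
and on the $A$ side the quasi-shuffle axioms (\ref{QS1})--(\ref{QS2}) together with the associativity and commutativity of $\star$ in $A$. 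The hypothesis that $f$ is a morphism of commutative algebras is used precisely in the base case of the $\bullet$-compatibility, where it provides $f(a\cdot b) = f(a) \bullet f(b)$ and thus feeds the induction upward.

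An alternative, more conceptual route would be to realise $U(V)$ as the quotient of the free quasi-shuffle algebra on $V$ (viewed as a bare vector space) by the two-sided congruence generated by the relations $a \bullet b = a\cdot_V b$ for $a,b \in V$, and then identify this quotient with $T^+(V)$ using the presentation of the free quasi-shuffle algebra over a set recalled in the preceding lemma. Either way, the upshot is the identification $U \cong T^+$ as functors from $\Comi$ to $\QShi$, which is exactly the statement of the quasi-shuffle PBW theorem.
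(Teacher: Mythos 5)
Your proposal is correct, and your primary route is genuinely different in structure from the paper's, although both turn on Sch\"utzenberger's trick. The paper in fact takes what you call the alternative route: it presupposes (from the general discussion immediately preceding the lemma) that $U(V)$ is the quotient of the free quasi-shuffle algebra on the vector space $V$ by the relations $a\bullet b=a\cdot b$, notes that $T^+(V)$ is generated by $V$ as a quasi-shuffle algebra and satisfies these relations --- whence a surjection $U(V)\twoheadrightarrow T^+(V)$ --- and concludes by observing that the normed terms $a_{i_1}\prec(a_{i_2}\prec\cdots\prec a_{i_n})$ span $U(V)$ while their images $a_{i_1}\cdots a_{i_n}$ form a basis of $T^+(V)$, so the surjection is an isomorphism. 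That argument sidesteps any direct verification that a map respects the three products: the morphism out of $U(V)$ exists for free by the universal property of the quotient, and injectivity is a linear-algebra count. Your primary route verifies the adjunction directly; it is more self-contained, since it does not assume the quotient description of the left adjoint, but it shifts the work onto the existence step, namely the check that $\bar f$ is compatible with $\prec$, $\succ$ and $\bullet$. That double induction does close --- it is essentially the induction the paper carries out for the morphism $\theta$ in the Corollary of Section \ref{sect:6} --- and you correctly isolate the one place where the hypothesis that $f$ is a morphism of commutative algebras enters, namely the length-one base case of the $\bullet$-compatibility.
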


An elementary proof follows once again from (a variant of) Sch\"utzenberger's construction of the free shuffle algebra.
Notice first that $T^+(A)$ is generated by $A$ as a quasi-shuffle algebra, and that, in it, the relations $a\bullet b=a\cdot b$ hold. Moreover, choosing a basis $(a_i)_{i\in I}$ of $A$, the tensors $a_{i_1}\dots a_{i_n}=a_{i_1}\prec (a_{i_2}\prec \dots \prec a_{i_n})\dots )$ form a basis of $T^+(A)$. On the other hand,
by the definition of the left adjoint $U(A)$ as a quotient of $Sh(A)$ by the relations $a\bullet b=a\cdot b$, using the defining relations of quasi-shuffle algebras, any term in $U(A)$ can be written recursively as a sum of terms in "normed form" $a_{i_1} \prec (a_{i_2}\prec \dots (a_{i_{n-1 }}\prec a_{i_n})\dots )$. The Lemma follows.

Notice that the existence of a basis of $T^+(A)$ of tensors $a_{i_1}\dots a_{i_n}=a_{i_1}\prec (a_{i_2}\prec \dots \prec a_{i_n})\dots )$ is the analog, for quasi-shuffle enveloping algebras, of the Poincar\'e-Birkhoff-Witt (PBW) basis for usual enveloping algebras.

\section{Endomorphism algebras}\label{sect:5}

We follow once again the analogy with the familiar notion of usual enveloping algebras and connected cocommutative Hopf algebras and study, in this section the analogs of the convolution product of their linear endomorphisms. Surjections happen to play, for quasi-shuffle algebras $T(A)$ associated to commutative algebras $A$, the role played by bijections in classical Lie theory, see  \cite{MR} and \cite{novelli2,foissy2}.

\begin{prop}Let $A$ be a coassociative (non necessarily counitary) coalgebra with coproduct $\tdelta:A\longrightarrow A\otimes A$, and $B$ be a NQSh algebra. The space of linear morphisms $Lin(A,B)$ is given a NQSh algebra structure in the following way: for all $f,g\in Lin(A,B)$, 
\begin{align}
f\prec g&=\prec \circ (f\otimes g) \circ \tdelta,&
f\succ g&=\succ \circ (f\otimes g) \circ \tdelta,&
f\bullet g&=\bullet \circ (f\otimes g) \circ \tdelta.
\end{align}\end{prop}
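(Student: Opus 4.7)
The plan is to verify each of the seven NQSh axioms (\ref{E1})--(\ref{E7}) directly, by rewriting both sides of every identity as a composition of the form
\[
\mu\circ(f\otimes g\otimes h)\circ\tdelta^{(2)}_A,
\]
where $\tdelta^{(2)}_A$ is the iterated reduced coproduct of $A$ (well defined thanks to coassociativity), and $\mu:B^{\otimes 3}\to B$ is a trilinear operation on $B$ built from $\prec_B,\succ_B,\bullet_B$. The two sides of each axiom on $Lin(A,B)$ will differ only in the trilinear operation $\mu$ used, and these two choices of $\mu$ will coincide precisely because of the corresponding NQSh axiom on $B$. First, however, I would observe that the sum of the three convolution products defined in the statement is itself $\star_B\circ(f\otimes g)\circ\tdelta_A$, so that the $\star$ product on $Lin(A,B)$ is obtained from the $\star_B$ product on $B$ in exactly the same convolution fashion as the three partial products.

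The key computation, which I would carry out explicitly once and then invoke for the remaining axioms, is the case (\ref{E1}). Unpacking the definition,
\[
(f\prec g)\prec h=\prec_B\circ\bigl(\prec_B\otimes\mathrm{id}_B\bigr)\circ(f\otimes g\otimes h)\circ(\tdelta_A\otimes\mathrm{id}_A)\circ\tdelta_A,
\]
while
\[
f\prec(g\star h)=\prec_B\circ\bigl(\mathrm{id}_B\otimes\star_B\bigr)\circ(f\otimes g\otimes h)\circ(\mathrm{id}_A\otimes\tdelta_A)\circ\tdelta_A.
\]
Coassociativity of $\tdelta_A$ gives $(\tdelta_A\otimes\mathrm{id}_A)\circ\tdelta_A=(\mathrm{id}_A\otimes\tdelta_A)\circ\tdelta_A=\tdelta^{(2)}_A$, and axiom (\ref{E1}) applied in $B$ yields $\prec_B\circ(\prec_B\otimes\mathrm{id}_B)=\prec_B\circ(\mathrm{id}_B\otimes\star_B)$ as maps $B^{\otimes 3}\to B$. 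Composing with $(f\otimes g\otimes h)\circ\tdelta^{(2)}_A$ on the right gives equality in $Lin(A,B)$.

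The remaining axioms (\ref{E2})--(\ref{E7}) follow by the same template: the left-hand side is obtained by first contracting two of the three arguments with a product and then contracting with the third via another product; coassociativity moves the iterated coproduct into a canonical position, and the matching axiom on $B$ finishes the identification. For (\ref{E2})--(\ref{E3}) one uses axioms (\ref{E2})--(\ref{E3}) of $B$; for (\ref{E5})--(\ref{E7}) one uses (\ref{E5})--(\ref{E7}); associativity of $\bullet$ on $Lin(A,B)$ reduces in the same way to associativity of $\bullet_B$. There is no genuine obstacle here: the only thing to be careful about is that $A$ need not be counital, which is exactly why one works throughout with the reduced coproduct $\tdelta_A$ and the non-unital NQSh structure on $B$; the convolution products then stay inside $Lin(A,B)$ without the need for any unit-related adjustment.
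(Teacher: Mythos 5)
Your proof is correct and follows essentially the same route as the paper: both arguments reduce each NQSh axiom on $Lin(A,B)$ to the corresponding axiom on $B$ by using coassociativity of $\tdelta$ to identify the two iterated coproducts, the paper illustrating this with (\ref{E2}) in Sweedler notation where you illustrate it with (\ref{E1}) in composition-of-maps notation. Your preliminary observation that the induced $\star$ on $Lin(A,B)$ is convolution with $\star_B$ is the same point the paper records as a remark immediately after its proof.
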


\begin{proof} The construction follows easily from the fact that ${\NQShi}$ is non-symmetric and from the coassociativity of the coproduct.
As an example, let us prove (\ref{E2}) using Sweedler's notation for $\tdelta$. Let $f,g,h\in Lin(A,B)$. For all $x\in A$,
\begin{align*}
(f\succ g)\prec h(x)&=(f\succ g)(x') \prec h(x'')\\
&=(f((x')')\succ g((x')''))\prec h(x'')\\
&=f(x') \succ (g((x'')')\prec h((x'')'')\\
&=f(x') \succ (g\prec h)(x'')\\
&=f\succ (g\prec h)(x).
\end{align*}
So $(f\succ g)\prec h=f\succ (g\prec h)$. \end{proof} 

{\bf Remark.} The induced product $\star$ on $Lin(A,B)$ is the usual convolution product.

\begin{cor}
The set of linear endomorphisms of $A$, where $k\oplus A$ is a NQSh bialgebra is naturally equiped with the structure of a NQSh algebra.
\end{cor}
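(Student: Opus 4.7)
The corollary is an immediate application of the preceding Proposition, specialized to the case $A=B$, so the plan is simply to observe that the hypotheses of the Proposition are met.

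First, I would note that when $k\oplus A$ is a NQSh bialgebra, the subspace $A$ inherits both of the structures needed to invoke the Proposition. On the algebra side, the three products $\prec,\succ,\bullet$ of the bialgebra $k\oplus A$ restrict to bilinear operations on $A$ (the only products that are not defined intrinsically on $k\oplus A$ are the ones involving two copies of the unit, which do not concern elements of $A$), and the axioms (\ref{E1})--(\ref{E7}) for these operations on $A$ are inherited from $k\oplus A$. Hence $A$ is a NQSh algebra.

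Next, for the coalgebra structure, the reduced coproduct
\[
\tdelta(x):=\Delta(x)-x\otimes 1-1\otimes x
\]
takes values in $A\otimes A$ for $x\in A$, precisely because $k\oplus A$ is counital and $\Delta(x)-x\otimes 1-1\otimes x$ has no component along $1\otimes 1$ nor along terms of the form $1\otimes y$ or $y\otimes 1$ once one uses counitality. A straightforward calculation from the coassociativity of $\Delta$ on $k\oplus A$ shows that $\tdelta$ is coassociative (without counit) on $A$; this is the standard fact that passing from $\Delta$ to $\tdelta$ preserves coassociativity in the connected setting. Hence $(A,\tdelta)$ is a (non necessarily counital) coassociative coalgebra.

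Having verified both hypotheses, I would then apply the Proposition with the coalgebra $A$ and the NQSh algebra $A$, obtaining a NQSh algebra structure on $Lin(A,A)=End(A)$ with operations
\[
f\prec g=\prec\circ(f\otimes g)\circ\tdelta,\quad f\succ g=\succ\circ(f\otimes g)\circ\tdelta,\quad f\bullet g=\bullet\circ(f\otimes g)\circ\tdelta.
\]
There is no real obstacle here; the only point requiring any care is the verification that $\tdelta$ lands in $A\otimes A$ and is coassociative, but this is the familiar reduced-coproduct computation and requires no more than expanding $\Delta$ into its components involving the unit and applying the counit axiom.
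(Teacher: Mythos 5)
Your proposal is correct and is exactly the argument the paper intends: the corollary is stated without proof as an immediate specialization of the preceding Proposition to $B=A$ with the reduced coproduct $\tdelta$ supplying the coassociative (non counital) coalgebra structure on $A$. Your explicit checks that $\tdelta$ lands in $A\otimes A$ and that the NQSh products restrict to $A$ are the right (routine) verifications.
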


Let us turn now to the quasi-shuffle analog of the Malvenuto-Reutenauer noncommutative shuffle algebra of permutations. The appearance of a noncommutative shuffle algebra of permutations in Lie theory in \cite{MR} can be understood operadically by noticing that the linear span of the $n$-th symmetric group $\S_n$ is $\As_n$, the $n$-th component of the operad of associative algebras (or also the $n$-th component of $\QSh$, the quasi-shuffle operad). The same reason explain why surjections appear naturally in the study of quasi-shuffle algebras: ordered partitions of initial subsets of the integers (say $\{2,4\},\{5\},\{1,3\}$) parametrize  a natural basis of ${\QSh}_n$, and such ordered partitions are canonically in bijection with surjections (here, the surjection $s$ from $[5]$ to $[3]$ defined by $s(2)=s(4)=1,\ s(5)=2,\ s(1)=s(3)=3$). Let us show how the NQSh algebra structure of ${\QSh}$ can be recovered from the point of view of the structure of NQSh algebras of linear endomorphisms. In the process, we also give explicit combinatorial formulas for the corresponding structure maps $\prec,\succ,\bullet$. We also point out that composition of endomorphisms leads to a new product on ${\QSh}$ (such a product is usually called ``internal product'' in the theory of combinatorial Hopf algebras, we follow the use, see \cite{gelfand1994noncommutative,novelli2}).

Let $n \geq 0$. We denote by $\Surj_n$ the set of maps $\sigma:[n]:=\{1,\ldots,n\}\longrightarrow \mathbb{N}^\ast$
such that $\sigma(\{1,\ldots,n\})=\{1,\ldots,k\}$ for a certain $k$. The corresponding elements in ${\QSh}_n$ are the ordered partitions $\sigma^{-1}(\{1\}),\dots,\sigma^{-1}(\{k\})$ of $[n]$. The integer $k$ is the maximum of $\sigma$ and denoted by $max(\sigma)$.
 The element $\sigma\in \Surj_n$ will be represented by the packed word $(\sigma(1)\ldots \sigma(n))$ (recall that a word $n_1\dots n_k$ over the integers is called packed if the underlying set $S=\{n_1,\dots,n_k\}$ is an initial subset of $\mathbb{N}^\ast$, that is, $S=[m]$ for a certain $m$).  
 We identify in this way elements of $\Surj_n$ with packed words of length $n$. For later use, recall also that any word $n_1\dots n_k$ over the integers can be packed: $pack(n_1\dots n_k)=m_1\dots m_k$ is the unique packed word preserving the natural order of letters ($m_i<m_j\Leftrightarrow n_i<n_j$, $m_i=m_j\Leftrightarrow n_i=n_j$, e.g. $pack(6353)=3121$). \\

We assume that $V$ is an associative, commutative algebra and work with the quasi-shuffle algebra $T^+(V)$.
Let $\sigma \in \Surj_n$, $n \geq 1$.  We define $F_{\sigma} \in End_K(T(V))$ in the following way: for all $x_1,\ldots,x_l \in V$,
$$F_{\sigma}(x_1\ldots x_l)=\left\{
\begin{array}{l}
\displaystyle \left(\prod_{\sigma(i)=1}x_i\right)\ldots \left(\prod_{\sigma(i)=max(\sigma)}x_i\right) \mbox{ if }k=l, \\[9mm]
0\mbox{ otherwise}.
\end{array}\right.$$
Note that in each parenthesis, the product is the product of $V$. For example, if $x,y,z\in V$,
\begin{align*}
F_{(123)}(xyz)&=xyz&F_{(132)}(xyz)&=xzy&F_{(213)}(xyz)&=yxz\\
F_{(231)}(xyz)&=zxy&F_{(312)}(xyz)&=yzx&F_{(321)}(xyz)&=zyx\\
F_{(122)}(xyz)&=x(y.z)&F_{(212)}(xyz)&=y(x.z)&F_{(221)}(xyz)&=z(x.y)\\
F_{(112)}(xyz)&=(x.y)z&F_{(121)}(xyz)&=(x.z)y&F_{(211)}(xyz)&=(y.z)x\\
&&F_{(111)}(xyz)&=x.y.z.
\end{align*}
We also define $F_1$, where $1$ is the empty word, by $F_1(x_1\ldots x_n)=\varepsilon(x_1\ldots x_n)1=\delta_{x_1\ldots x_n,1}$.\\

{\bf Notations.} Let $k,l \geq 0$.
\begin{enumerate}
\item \begin{enumerate}
\item  We denote by $QSh_{k,l}$ the set of $(k,l)$ quasi-shuffles, that is to say elements $\sigma\in \Surj_{k+l}$ such that
$\sigma(1)<\ldots <\sigma(k)$ and $\sigma(k+1)<\ldots<\sigma(k+l)$.
\item $QSh^\prec_{k,l}$ is the set of $(k,l)$ quasi-shuffles $\sigma$ such that $\sigma^{-1}(\{1\})=\{1\}$.
\item $QSh^\succ_{k,l}$ is the set of $(k,l)$ quasi-shuffles $\sigma$ such that $\sigma^{-1}(\{1\})=\{k+1\}$.
\item $QSh^\bullet_{k,l}$ is the set of $(k,l)$ quasi-shuffles $\sigma$ such that $\sigma^{-1}(\{1\})=\{1,k+1\}$.
\end{enumerate}
Note that $QSh_{k,l}=QSh^\prec_{k,l}\sqcup QSh^\succ_{k,l}\sqcup QSh^\bullet_{k,l}$.
\item If $\sigma \in \Surj_k$ and $\tau \in \Surj_l$, $\sigma \otimes \tau$ is the element of $\Surj_ {k+l}$ represented by
the packed word $\sigma \tau[\max(\sigma)]$., where $[k]$ denotes the translation by $k$ ($312[5]=867$).
\end{enumerate}

The subspace of $End_K(T(V))$ generated by the maps $F_\sigma$ is stable under composition and the  products:

\begin{prop}\label{7}
Let $\sigma \in \Surj_k$ and $\tau \in \Surj_l$.
\begin{enumerate}
\item If $max(\tau)=k$, then $F_\sigma\circ F_\tau=F_{\sigma \circ \tau}$. Otherwise, this composition is equal to $0$.
\item  \begin{align*}
F_\sigma\prec F_\tau&=\sum_{\zeta \in QSh_{k,l}^\prec}F_{\zeta \circ (\sigma \otimes \tau)},&
F_\sigma\succ F_\tau&=\sum_{\zeta \in QSh_{k,l}^\succ}F_{\zeta \circ (\sigma \otimes \tau)},\\
F_\sigma\bullet F_\tau&=\sum_{\zeta \in QSh_{k,l}^\bullet}F_{\zeta \circ (\sigma \otimes \tau)},&
F_\sigma\qshuffle F_\tau&=\sum_{\zeta \in QSh_{k,l}}F_{\zeta \circ (\sigma \otimes \tau)}.
\end{align*}
The same formulas describe the structure of the operad ${\QSh}$ as a NQSh algebra (i.e., in $\QSh$, using the identification between surjections and ordered partitions, $\sigma\prec \tau =\sum_{\zeta \in QSh_{k,l}^\prec}\zeta \circ (\sigma \otimes \tau)$, and so on).
\end{enumerate}\end{prop}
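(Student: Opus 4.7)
The plan is to prove both parts by direct computation on words in $T(V)$ and then transfer the result to the operad $\QSh$ using Theorem~\ref{Qshbialg}.

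For part (1), I would first note that $F_\tau(x_1\dots x_l)$ vanishes unless $l$ equals the arity of $\tau$, in which case the output is the word whose $i$-th letter is $y_i := \prod_{\tau(j)=i}x_j$, a word of length $\max(\tau)$. Applying $F_\sigma$ then gives zero unless $\max(\tau)=k$, and otherwise produces the word whose $i$-th letter equals $\prod_{\sigma(j)=i} y_j = \prod_{j':\,\sigma(\tau(j'))=i} x_{j'}$, where associativity and commutativity of the product in $V$ are used to flatten the nested products. By definition of $F$ this last expression is exactly $F_{\sigma \circ \tau}(x_1\dots x_l)$.

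For the product formulas in part (2), I would apply the defining identity $F_\sigma \prec F_\tau = \prec \circ (F_\sigma \otimes F_\tau) \circ \tdelta$ to a generic tensor $x_1\dots x_n$. Since the reduced deconcatenation coproduct of $T(V)$ is $\tdelta(x_1\dots x_n)=\sum_{i=1}^{n-1}x_1\dots x_i \otimes x_{i+1}\dots x_n$, the vanishing conditions on $F_\sigma$ and $F_\tau$ single out the term at $i = k$ and force $n = k+l$. What remains is a product $Y \prec Z$ of two concrete words built from $\sigma$, $\tau$ and the $x_j$'s, of lengths $\max(\sigma)$ and $\max(\tau)$. Iteratively unfolding Hoffman's recursive rule $av \prec bw = a(v\qshuffle bw)$ rewrites $Y \prec Z$ as a sum over the ways of quasi-shuffling the letters of $Y$ with those of $Z$ subject to the constraint that the first letter of the output comes from $Y$. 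Each such quasi-shuffle is encoded by a $\zeta$ of the announced type, and the composition $\zeta \circ (\sigma \otimes \tau)$ describes precisely the induced partition of $\{1,\dots,k+l\}$ whose fibers are multiplied in $V$ to produce each letter of the output --- so the sum matches term by term with $\sum_\zeta F_{\zeta \circ (\sigma \otimes \tau)}$. The treatments of $\succ$ and $\bullet$ are strictly parallel, using Hoffman's rules $av \succ bw = b(av \qshuffle w)$ and $av \bullet bw = (a.b)(v \qshuffle w)$; the $\qshuffle$ formula follows by summing.

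The final sentence of part (2) --- that these same formulas describe the NQSh algebra structure of the operad $\QSh$ --- is essentially a translation: by Theorem~\ref{Qshbialg} the products on $\QSh$ are defined through the corresponding products in $QSh(\mathbf N^\ast)$ with the second argument shifted by $n$, and under the bijection between $\Surj_n$ and ordered partitions of $[n]$ the shift $\sigma\,\tau[\max(\sigma)]$ realizes exactly $\sigma \otimes \tau$. Applying Hoffman's recursion inside $QSh(\mathbf N^\ast)$ therefore yields the same quasi-shuffle sums. The main obstacle is the combinatorial bookkeeping: making the bijection between iterated applications of Hoffman's rule and surjections of the form $\zeta \circ (\sigma \otimes \tau)$ precise, and verifying that the fibers of the composite surjection match the grouping of factors in $V$ dictated by the quasi-shuffle. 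This is expected to be routine once one works consistently with the fiber-partition description of surjections.
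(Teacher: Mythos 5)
Your proposal is correct and follows essentially the same route as the paper, whose proof consists of exactly this direct computation (the four identities expressing $x_1\ldots x_k\prec x_{k+1}\ldots x_{k+l}$ etc.\ as sums of $F_\zeta$'s over quasi-shuffles are precisely what you obtain by unfolding Hoffman's recursion, and the transfer to $\QSh$ is the same translation via the shift/standardization). One small point your computation makes visible: since $F_\sigma(x_1\ldots x_k)$ and $F_\tau(x_{k+1}\ldots x_{k+l})$ have lengths $max(\sigma)$ and $max(\tau)$, the index set in the displayed formulas should be read as the quasi-shuffles $QSh_{max(\sigma),max(\tau)}$ (so that $\zeta\circ(\sigma\otimes\tau)$ is defined as in part (1)); this agrees with the paper's stated identities, which are the special case $\sigma=I_k$, $\tau=I_l$.
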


\begin{proof} The proof of 1. and 2. follows by direct computations. The identification with the corresponding formulas for $\QSh$ follows from the identities, 
for all $x_1,\ldots,x_{k+l} \in V$, in the quasi-shuffle algebra $T^+(A)$:
\begin{align*}
x_1\ldots x_k \prec x_{k+1}\ldots x_{k+l}&=\sum_{\zeta \in QSh_{k,l}^\prec}F_\zeta(x_1\ldots x_{k+l}),\\
x_1\ldots x_k \succ x_{k+1}\ldots x_{k+l}&=\sum_{\zeta \in QSh_{k,l}^\succ}F_\zeta(x_1\ldots x_{k+l}),\\
x_1\ldots x_k \bullet x_{k+1}\ldots x_{k+l}&=\sum_{\zeta \in QSh_{k,l}^\bullet}F_\zeta(x_1\ldots x_{k+l}),\\
x_1\ldots x_k \qshuffle x_{k+1}\ldots x_{k+l}&=\sum_{\zeta \in QSh_{k,l}}F_\zeta(x_1\ldots x_{k+l}).
\end{align*}
Moreover:
$$x_1\ldots x_k \shuffle x_{k+1}\ldots x_{k+l}=\sum_{\zeta \in Sh_{k,l}}F_\zeta(x_1\ldots x_{k+l}),$$
where $Sh_{k,l}$ is the set of $(k,l)$-shuffles, that is to say $\S_{k+l}\cap QSh_{k,l}$.

 \end{proof}

{\bf Remarks.} \begin{enumerate}
\item $F_{(1\ldots n)}$ is the projection on the space of words of length $n$. Consequently:
$$Id=\sum_{n=0}^\infty F_{(1\ldots n)}.$$
\item In general, this action of  packed words is not faithful. For example, if $A$ is a trivial algebra, 
then for any $\sigma \in \Surj_k\setminus {\S}_k$, $F_\sigma=0$.
\item Here is an example where the action is faithful. Let $A=K[X_i\mid i\geq 1]_+$.
Let us assume that $\sum a_\sigma F_\sigma=0$. Acting on the word $X_1\ldots X_k$, we obtain:
$$\sum_{\sigma \in \Surj_k}a_\sigma\left(\prod_{\sigma(i)=1} X_i\right)\ldots \left(\prod_{\sigma(i)=max(\sigma)}X_i\right)=0.$$
As the $X_i$ are algebraically independent, the words appearing in this sum are linearly independent, so for all $\sigma$, $a_\sigma=0$. 
\end{enumerate}

\section{Canonical projections on primitives}\label{sect:6}

This section studies the analog, for quasi-shuffle bialgebras, of the canonical projection from a connected cocommutative Hopf algebra to its primitive part --the logarithm of the identity (see e.g. \cite{Reutenauer,patras1993decomposition,patras1994algebre}).

Recall that a coalgebra with a coassociative coproduct $\tilde\Delta$ is \it connected \rm if and only if the coproduct il locally conilpotent.

\begin{prop}\label{4}Let $A$ be a coassociative, non counitary, coalgebra with a locally conilpotent coproduct $$\tdelta:A\longrightarrow A\otimes A,\ \  A=\bigcup_{n\geq 0} Ker(\tdelta^{(n)}),$$ and let $B$ be a NQSh algebra. Then, for any $f\in Lin(A,B)$,
 there exists a unique map $\pi_f\in Lin(A,B)$,
such that $$f=\pi_f+\pi_f \prec f.$$
\end{prop}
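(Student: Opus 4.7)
Rewrite the equation as the fixed-point equation $\pi_f = f - \pi_f \prec f$ and solve it inductively along the conilpotency filtration $A_n := Ker(\tdelta^{(n)})$ of $A$ (so $A_1 = Prim(A)$), whose union is $A$ by local conilpotency; the whole argument reduces to a recursion on $n$.

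The pivotal preliminary fact is the sharp inclusion $\tdelta(A_n) \subset A_{n-1} \otimes A$ for $n \geq 1$ (with the convention $A_0 := 0$). This follows from the coassociativity identity $\tdelta^{(n)} = (\tdelta^{(n-1)} \otimes id)\tdelta$: writing $\tdelta(a) = \sum_i a'_i \otimes a''_i$ with the $a''_i$ chosen linearly independent in $A$, the vanishing $\sum_i \tdelta^{(n-1)}(a'_i) \otimes a''_i = \tdelta^{(n)}(a) = 0$ forces each $a'_i$ to lie in $A_{n-1}$. Consequently, for any $g \in Lin(A,B)$ already known on $A_{n-1}$, the quantity $(g \prec f)(a) = \sum_i g(a'_i) \prec f(a''_i)$ is an unambiguous element of $B$ whenever $a \in A_n$.

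Uniqueness is then immediate: if $\pi, \pi'$ both satisfy the equation, $g := \pi - \pi'$ satisfies $g = -\, g \prec f$, and induction on $n$ shows $g$ vanishes on every $A_n$ (base $n=1$: $\tdelta(a) = 0$; step: apply the preliminary fact and the inductive hypothesis to the $a'_i$). Existence is the dual construction: set $\pi_f := f$ on $A_1$ and, once $\pi_f$ is defined on $A_{n-1}$, define for $a \in A_n$
\[
\pi_f(a) := f(a) - \sum_i \pi_f(a'_i) \prec f(a''_i),
\]
well-defined by the preliminary fact. Compatibility with the inclusions $A_{n-1} \subset A_n$ is built into the construction, so the pieces assemble into a linear map $\pi_f : A \to B$ solving $f = \pi_f + \pi_f \prec f$.

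The only technically substantive step is the filtration lemma $\tdelta(A_n) \subset A_{n-1} \otimes A$; the rest is a routine fixed-point iteration. Alternatively, iterating the equation and using axiom (\ref{E1}) in the form $(g \prec f)\prec f = g \prec (f \star f)$ would lead to the closed expansion $\pi_f = \sum_{n \geq 0}(-1)^n f \prec f^{\star n}$, with $f^{\star 0}$ interpreted via a unital completion of $B$; the inductive route avoids that extra bookkeeping.
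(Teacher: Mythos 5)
Your proposal is correct and follows essentially the same route as the paper: a fixed-point recursion along the coradical filtration $Ker(\tdelta^{(n)})$, with existence and uniqueness both handled by induction once one knows that $\tdelta$ lowers the filtration degree on the left tensor factor. The only cosmetic difference is that the paper chooses vector-space complements $E_n$ with $F_n=F_{n-1}\oplus E_n$ and defines $\pi_f$ on each $E_n$, whereas you work directly with the filtration and justify well-definedness via the linear-independence argument; both are standard and equivalent.
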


\begin{proof} For all $n\geq 1$, we put $F_n=Ker(\tdelta^{(n)})$: this defines the coradical filtration of $A$.
In particular, $F_1=:Prim(A)$.  Moreover, if $n\geq 1$:
$$\tdelta(F_n)\subseteq F_{n-1}\otimes F_{n-1}.$$
Let us choose for all $n$ a subspace $E_n$ of $A$ such that $F_n=F_{n-1}\oplus E_n$. In particular, $E_1=F_1=Prim(A)$.
Then, $A$ is the direct sum of the $E_n$'s and for all $n$:
$$\tdelta(E_n)\subseteq \bigoplus_{i,j<n} E_i \otimes E_j.$$

{\it Existence.} We inductively define a map $\pi_f:E_n \longrightarrow B$ for all $n \geq 1$ in the following way:
\begin{itemize}
\item For all $a\in E_1$, $\pi_f(a)=f(a)$.
\item If $a\in E_n$, as $\displaystyle \tdelta(a)\in \bigoplus_{i+j<n} E_i \otimes E_j$, $(\pi_f \otimes f)\circ \tdelta(a)$ is already defined. We then put:
$$\pi_f(a)=f(a)-\prec \circ (\pi_f \otimes f) \circ \tdelta(a) = f(a)-(\pi_f \prec f)(a)$$
\end{itemize}

{\it Unicity.} Let $\mu_f$ such that $f=\mu_f+(\mu_f \prec f)$. For all $a\in E_1$,
$f(a)=\mu_f(a)+0$, so $\mu_f(a)=\pi_f(a)$. Let us assume that for all $k<n$, $\mu_f(a)=\pi_f(a)$ if $a\in E_k$. Let $a\in E_n$. Then:
$$a=\mu_f(a)+\mu_f(a')\prec a''=\mu_f(a)+\pi_f(a')\prec a''=\mu_f(a)+a-\pi_f(a),$$
so $\mu_f(a)=\pi_f(a)$. Hence, $\mu_f=\pi_f$. 
\end{proof}

\begin{prop}\label{8}
When $A=B=T^+(V)$ and $f=Id$, the map $\pi:=\pi_f$ defined in proposition \ref{4}  is equal to the projection $F_{(1)}$.
\end{prop}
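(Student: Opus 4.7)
The plan is to verify directly that $F_{(1)}$ satisfies the defining identity $Id = \pi + \pi \prec Id$ of Proposition~\ref{4}, after which the uniqueness clause of that proposition immediately forces $\pi = F_{(1)}$.

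First I would evaluate $F_{(1)} \prec Id$ on a basis tensor $x_1 \ldots x_n$ of $T^+(V)$. Using the reduced deconcatenation
\[
\tdelta(x_1\ldots x_n) = \sum_{i=1}^{n-1} x_1\ldots x_i \otimes x_{i+1}\ldots x_n,
\]
and the fact that $F_{(1)}$ annihilates every word of length $\neq 1$, only the term $i = 1$ survives in $(F_{(1)} \otimes Id)\circ\tdelta$. Hence for $n \geq 2$,
\[
(F_{(1)} \prec Id)(x_1\ldots x_n) = x_1 \prec (x_2\ldots x_n),
\]
while for $n = 1$ the expression vanishes because $\tdelta(x_1) = 0$.

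Next I would invoke Hoffman's recursive definition $av \prec bw = a(v \qshuffle bw)$ with $a = x_1$ and $v$ the empty word, which gives $x_1 \prec (x_2\ldots x_n) = x_1(1 \qshuffle x_2\ldots x_n) = x_1\ldots x_n$. Adding $F_{(1)}(x_1\ldots x_n)$, which equals $x_1$ when $n = 1$ and $0$ otherwise, recovers $x_1\ldots x_n$ in both cases, so $Id = F_{(1)} + F_{(1)} \prec Id$ as required.

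I expect no serious obstacle: the only delicate point is unwinding Hoffman's formula on $x_1 \prec (x_2\ldots x_n)$ correctly, but this is immediate once one notes that the left factor is a single letter with empty tail $v$, so the quasi-shuffle inside reduces to the identity on the right factor.
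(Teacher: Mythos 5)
Your proof is correct and follows essentially the same strategy as the paper: verify that $F_{(1)}$ satisfies $Id=F_{(1)}+F_{(1)}\prec Id$ and then invoke the uniqueness clause of Proposition \ref{4}. The only (cosmetic) difference is that you compute $F_{(1)}\prec Id$ directly on basis tensors via the deconcatenation coproduct and Hoffman's recursion, whereas the paper performs the same verification at the operator level using the formula $F_{(1)}\prec F_{(1\ldots n)}=F_{(1\,2\ldots n+1)}$ from Proposition \ref{7} together with $Id=\sum_n F_{(1\ldots n)}$.
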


\begin{proof} First, observe that, as $QSh^\prec_{1,k}=\{(1,\ldots,k)\}$, 
for all packed words $(a_1\ldots a_k)$, $F_{(1)}\prec F_{(a_1\ldots a_k)}=F_{(1 a_1+1\ldots a_k+1)}$.
Hence, in $A$:
 $$ F_{(1)}+F_{(1)}\prec Id_A=F_{(1)}+\sum_{n=1}^\infty F_{(1)}\prec F_{(1\ldots n)}=F_{(1)}
+\sum_{n=1}^\infty F_{(1\ldots n+1)}=\sum_{n=1}^\infty F_{(1\ldots n)}=Id_A.$$
 By unicity in proposition \ref{4}, $\pi_f=F_{(1)}$. \end{proof}

More generally, we have:
\begin{prop}\label{4'}
Let $A$ be a non unital, connected NQSh bialgebra, and $\pi$ the unique solution to
$$Id_A=\pi+\pi\prec Id_A,$$
then $\pi$ is a projection on $Prim(A)$, and for all $x\in Prim(A)$,
$y\in A$, $\pi(x\prec y)=0$.
\end{prop}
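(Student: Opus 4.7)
The plan is to prove both assertions simultaneously by induction along the coradical filtration $F_n = Ker(\tdelta^{(n)})$ introduced in the proof of Proposition \ref{4}, using repeatedly the simplification that relation (\ref{nqsbialgebra1}) undergoes when one of its arguments is primitive.

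First I would record the crucial computational lemma: if $x \in Prim(A)$, then three of the five terms in (\ref{nqsbialgebra1}) vanish (all those involving $x'\otimes x''$), leaving
\begin{equation*}
\tdelta(x \prec y) = x \prec y' \otimes y'' + x \otimes y.
\end{equation*}
Next, to show $\pi$ projects onto $Prim(A)$, note that by construction $\pi|_{Prim(A)} = Id$ (the recursion of Proposition \ref{4} starts with $\pi(a) = f(a) = a$ on $E_1$), so it suffices to prove $\tdelta \circ \pi = 0$. I would argue by induction on $n$ that $\pi(E_n) \subseteq Prim(A)$. The base case $n=1$ is immediate. For the inductive step, use the defining relation in the equivalent form $a - \pi(a) = \pi(a')\prec a''$ and compute
\begin{equation*}
\tdelta(\pi(a)) = \tdelta(a) - \tdelta(\pi(a') \prec a''),
\end{equation*}
where by induction $\pi(a')$ is primitive (since $a'\in F_{n-1}$). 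Applying the simplified formula above to $\tdelta(\pi(a') \prec a'')$ and then using coassociativity to rewrite $(\pi(a')\prec a'')\otimes a'''$ as $(a_1 - \pi(a_1)) \otimes a_2$ (via the identity $a_1 - \pi(a_1) = \pi((a_1)') \prec (a_1)''$ applied to the first tensor factor of $\tdelta(a)$), the three remaining terms telescope to zero.

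For the second assertion, let $x \in Prim(A)$ and proceed by induction on the coradical level of $y$. The defining equation applied to $x\prec y$ reads
\begin{equation*}
x\prec y = \pi(x\prec y) + \pi((x\prec y)')\prec (x\prec y)'',
\end{equation*}
and substituting the simplified $\tdelta(x\prec y) = x \prec y' \otimes y'' + x \otimes y$ into the right-hand side, together with $\pi(x) = x$, yields
\begin{equation*}
x\prec y = \pi(x\prec y) + \pi(x \prec y')\prec y'' + x \prec y,
\end{equation*}
so that $\pi(x\prec y) = -\pi(x\prec y')\prec y''$. When $y\in Prim(A)$ the right-hand side is zero (base case); in general $y'$ sits in strictly lower coradical filtration, so the inductive hypothesis gives $\pi(x\prec y') = 0$ and hence $\pi(x\prec y)=0$.

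The main obstacle I anticipate is bookkeeping of the Sweedler notation when iterating the coproduct, since the reductions all rely on identifying $(\tdelta\otimes Id)\tdelta$ with $(Id\otimes\tdelta)\tdelta$ at the right step; once the simplified expression for $\tdelta(x\prec y)$ with $x$ primitive is in hand, both parts follow by clean inductions.
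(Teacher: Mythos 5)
Your proposal is correct and follows essentially the same route as the paper's proof: both parts proceed by induction along the coradical filtration, using the simplification of $\tdelta(x\prec y)$ when $x$ is primitive together with the defining recursion $Id_A=\pi+\pi\prec Id_A$ to make the terms telescope. The paper's argument is exactly this, so there is nothing to add.
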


\begin{proof} Let us prove that for all $a\in E_n$, $\pi(a)\in Prim(A)$ by induction on $n$.
As $E_1=Prim(A)$, this is obvious if $n=1$. Let us assume the result for all $k<n$. Let $a \in E_n$. Then $\pi(a)=a-\pi(a')\prec a''$.
By the induction hypothesis, we can assume that $\pi(a') \in Prim(A)$, so:
$$\tdelta(\pi(a))=a'\otimes a''-\pi(a')\prec a''\otimes a'''-\pi(a')\otimes a''=(a'-(\pi \prec Id)(a')-\pi(a'))\otimes a''=0.$$
Hence, for all $a\in A$, $\pi(a)\in Prim(a)$. So $\pi$ that, by its very  definition, acts as the identity on $Prim(A)$, is a projection on $Prim(A)$. 

Let $x \in Prim(A)$ and $y\in E_n$, let us prove that $\pi(x\prec y)=0$ by induction on $n$. If $n=1$, then $y\in Prim(A)$, so $\tdelta(x\prec y)=x\otimes y$,
and $\pi(x\prec y)=x\prec y-\pi(x)\prec y=x\prec y-x\prec y=0$. Let us assume the result at all rank $<n$. We have:
$$\tdelta(x\prec y)=x\prec y'\otimes y''+x\otimes y.$$
By the induction hypothesis, we can assume that $\pi(x\prec y')=0$, so $\pi(x\prec y)=x\prec y-0-\pi(x)\prec y=x\prec y-x\prec y=0$. \end{proof} 

{\bf Remark.} For all $x,y\in Prim(A)$:
\begin{align*}
\pi(x\prec y)&=0,&
\pi(x\succ y)&=x\succ y-y\prec x,&
\pi(x\bullet y)&=x\bullet y.
\end{align*}

\begin{cor}
Let $A$ be a nonunital, connected quasi-shuffle bialgebra. Then $Prim(A)$ is stable under $\bullet$ and the following map is an isomorphism
of quasi-shuffle bialgebras:
$$\theta:\left\{\begin{array}{rcl}
T^+(Prim(A))&\longrightarrow&A\\
a_1\ldots a_k&\longrightarrow&a_1\prec (a_2\prec (\ldots \prec a_k)\ldots).
\end{array}\right.$$
\end{cor}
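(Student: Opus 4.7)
The plan has three parts. First, $Prim(A)$ is stable under $\bullet$: for $x,y$ primitive, formula (\ref{qsbialgebra2}) expresses $\tdelta(x\bullet y)$ as three Sweedler summands, each containing one of $x',x'',y',y''$, so all vanish and $x\bullet y\in Prim(A)$. Thus $(Prim(A),\bullet)$ is a commutative subalgebra of $A$ and Hoffman's example makes $T^+(Prim(A))$ a quasi-shuffle bialgebra. Second, I produce $\theta$ via the quasi-shuffle PBW lemma: since $T^+$ is left adjoint to the forgetful functor $\QShi\to\Comi$ along $\bullet$, the inclusion $Prim(A)\hookrightarrow A$ (a $\Comi$-morphism) extends uniquely to a $\QShi$-morphism $\theta:T^+(Prim(A))\to A$. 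Inside $T^+(Prim(A))$, Sch\"utzenberger's trick (\ref{schutztrick}) identifies the tensor $a_1\dots a_k$ with $a_1\prec(a_2\prec(\dots\prec a_k)\dots)$, and since $\theta$ preserves $\prec$ this yields the explicit formula of the statement.

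Next I verify $\theta$ intertwines the deconcatenation coproduct on $T^+(Prim(A))$ with $\tdelta$ on $A$, by induction on $k$. The base $k=1$ is trivial (both sides are zero). For the step, set $Y:=\theta(a_2\dots a_k)$ and expand $\tdelta(\theta(a_1\dots a_k))=\tdelta(a_1\prec Y)$ using (\ref{qsbialgebra1}); primitivity of $a_1$ kills three of the five summands and leaves $a_1\prec Y'\otimes Y''+a_1\otimes Y$. Substituting the inductive description of $\tdelta Y$ and using the identity $a_1\prec\theta(a_2\dots a_{j+1})=\theta(a_1 a_2\dots a_{j+1})$ (which follows from Sch\"utzenberger's trick combined with $\theta$ being a $\prec$-morphism), everything reassembles into $\sum_{i=1}^{k-1}\theta(a_1\dots a_i)\otimes\theta(a_{i+1}\dots a_k)$, as required.

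Finally I construct an explicit inverse $\Psi:A\to T^+(Prim(A))$ by $\Psi:=\sum_{k\ge 1}\pi^{\otimes k}\circ\tdelta^{(k-1)}$, a finite sum on each element by local conilpotency. Once Step~2 is in hand, $\Psi\circ\theta=Id$ follows at once: $\tdelta^{(k-1)}\circ\theta$ becomes $\theta^{\otimes k}$ applied to deconcatenations of length $k$, and Proposition \ref{4'} gives $\pi\circ\theta(w)=0$ whenever $|w|\ge 2$, so only the complete deconcatenation of a length-$n$ tensor into $n$ singletons survives, recovering $a_1\otimes\dots\otimes a_n$. For $\theta\circ\Psi=Id_A$, separating the $k=1$ term from $\Psi$ and using coassociativity of $\tdelta$ shows $\theta\circ\Psi$ satisfies the fixed-point equation $f=\pi+\pi\prec f$; the identity $Id_A$ satisfies the same relation (Propositions \ref{4} and \ref{8}), and the inductive template of Proposition \ref{4} determines $f$ uniquely from $\pi$, hence $\theta\circ\Psi=Id_A$. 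I expect the main friction to lie in Step~2, where one has to turn nested $\prec$-products against iterated reduced coproducts with careful reindexing; once that compatibility is established, Step~3 is essentially a formal recursion.
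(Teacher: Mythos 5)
Your proof is correct, and while it leans on the same two pillars as the paper (the projection $\pi$ of Propositions \ref{4}--\ref{4'} and the induction establishing that $\theta$ intertwines deconcatenation with $\tdelta$), it diverges from the paper's argument at the two points where the paper works hardest. For multiplicativity, the paper runs an explicit induction on $k+l$ treating $\prec$, $\succ$ and $\bullet$ case by case (including the slightly delicate $a\bullet b$ step, which is reduced to $a\bullet b_1$ via the tridendriform axioms); you instead invoke the quasi-shuffle PBW lemma, so that the inclusion $Prim(A)\hookrightarrow A$, being a morphism for $\bullet$, extends uniquely to a morphism of quasi-shuffle algebras out of $T^+(Prim(A))$, and Sch\"utzenberger's trick identifies that morphism with $\theta$. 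This is slicker and makes the multiplicativity essentially formal, at the cost of relying on the adjunction $U\cong T^+$ whose proof in the paper is itself only sketched. For bijectivity, the paper proves injectivity via $(Id_A\otimes\pi)\circ\tdelta$ and surjectivity by showing $Prim(A)$ generates $A$ (using $a=\pi(a)+\pi(a')\prec a''$); you package both into the explicit two-sided inverse $\Psi=\sum_k\pi^{\otimes k}\circ\tdelta^{(k-1)}$, with $\theta\circ\Psi=Id_A$ obtained from the fixed-point equation $f=\pi+\pi\prec f$ and its uniqueness of solution (the same recursion on the coradical filtration that underlies Propositions \ref{4} and \ref{12}, applied with the roles of the two unknowns exchanged --- a point worth one explicit line, but not a gap). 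Your version yields a closed formula for $\theta^{-1}$ as a bonus; the paper's version avoids introducing $\Psi$ and stays entirely inside the generation-by-primitives picture.
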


\begin{proof} 
Let  $a_1,\ldots,a_k \in Prim(A)$. An easy induction on $k$ proves that:
$$\tdelta(\theta(a_1\otimes \ldots \otimes a_k))=\sum_{i=1}^{k-1} \theta(a_1\otimes \ldots \otimes a_i) \otimes \theta(a_{i+1}\otimes \ldots
\otimes a_k).$$
So $\theta$ is a coalgebra morphism.

From this coalgebra morphism property and the identity $\pi (x\prec y)=0$ for $x\in Prim(A)$, we get for $a_1,\ldots,a_k \in Prim(A)$, $(Id_A\otimes \pi)\circ \tdelta(\theta(a_1\otimes \ldots \otimes a_k))=\theta(a_1\otimes \ldots \otimes a_{k-1}) \otimes \theta(a_k)$. Since $\theta$ is the identity on its restriction to $Prim(A)$, its injectivity follows by induction.\\

Let $a=a_1\ldots a_k$ and $b=b_1\ldots b_l \in T^+(Prim(A))$. Let us prove by induction on $k+l$ that:
\begin{align*}
\theta(a\prec b)&=\theta(a)\prec \theta(b),&\theta(a\succ b)&=\theta(a)\succ \theta(b),&\theta(a\bullet b)&=\theta(a)\bullet \theta(b).
\end{align*}
If $k=1$, then $a\prec b_1\ldots b_l=ab_1\ldots b_l$, so $\theta(a\prec b)=a\prec \theta(b)=\theta(a)\prec \theta(b)$.
If $l=1$, then $a\succ b=ba$, so $\theta(a\succ b)=b \prec \theta(a)=\theta(b) \prec \theta(a)=\theta(a)\succ \theta(b)$.
If $k=l=1$, , $x\bullet y=\pi (x\bullet y) \in Prim(A)$, so $\theta(a \bullet b)=a\bullet b=\theta(a)\bullet \theta(b)$.
All these remarks give the results for $k+l\leq 2$. Let us assume the result at all ranks $<k+l$. 
If $k=1$, we already proved that $\theta(a\prec b)=\theta(a)\prec \theta(b)$. If $k\geq 2$, $a\prec b=a_1 (a_2\ldots a_k \qshuffle b)$.
By the induction hypothesis applied to $a_2\ldots a_k$ and $b$:
$$\theta(a\prec b)=a_1\prec (\theta(a_2\ldots a_k) \star \theta(b))=(a_1\prec \theta(a_2\ldots a_k)) \prec \theta(b)
=\theta(a)\prec \theta(b).$$
Using the commutativity of $T^+(Prim(A))$ and $A$, we obtain $\theta(a\succ b)=\theta(a)\succ \theta(b)$. 
If $l>1$, $a\bullet b=a\bullet (b_1\prec b_2\ldots b_l)=(a\bullet b_1)\prec b_2\ldots b_l$.
Moreover, $a\bullet b_1$ is a linear span of words of length $\leq k+1$, so, by the preceding computation and the induction hypothesis:
$$\theta(a\bullet b)=\theta(a\bullet b_1)\prec \theta(b_2\ldots b_l).$$
The induction hypothesis holds for $a$ and $b_1$, so:
$$\theta(a\bullet b)=(\theta(a)\bullet \theta(b_1))\prec \bullet(b_2\ldots b_l)
=\theta(a) \bullet (b_1 \prec \theta(b_2\ldots b_l))=\theta(a)\bullet \theta(b).$$
If $l=1$, then $k>1$ and we conclude with the commutativity of $\bullet$. \\

Let us now prove that $Prim(A)$ generates $A$ as a quasi-shuffle algebra.
Let $A'$ be the quasi-shuffle subalgebra of $A$ generated by $Prim(A)$. Let $a\in E_n$, let us prove that $x \in A'$ by induction on $n$.
As $E_1=Prim(A)$, this is obvious if $n=1$. Let us assume the result for all ranks $<n$. Then $a=\pi(a)+\pi(a')\prec a''$.
By the induction hypothesis, $a''\in A'$. Moreover, $\pi(a)$ and $\pi(a')\in Prim(A)$, so $a\in A'$.  

As a conclusion, $\theta$ is a morphism of quasi-shuffle algebras, whose image contains $Prim(A)$, which generates $A$, so $\theta$ is surjective.
 \end{proof}

\section{Relating the shuffle and quasi-shuffle operads}\label{sect:7}

A fundamental theorem of the theory of quasi-shuffle algebras relates quasi-shuffle bialgebras and shuffle bialgebras and, under some hypothesis (combinatorial and graduation hypothesis on the generators in Hoffman's original version of the theorem \cite{Hoffman}), shows that the two categories of bialgebras are isomorphic. This result allows to understand quasi-shuffle bialgebras as deformations of shuffle bialgebras and, as such, can be extended to other deformations of the shuffle product than the one induced by Hoffman's exponential map, see \cite{foissy2}. We will come back to this line of arguments in the next section.

Here, we stick to the relations between shuffle and quasi-shuffle algebras and show that Hoffman's theorem can be better understood and refined in the light of an Hopf algebra morphism relating the shuffle and quasi-shuffle operads.

Let us notice first that the same construction that allows to define a NQSh algebra structure on the operad ${\QSh}$ allows, \it mutatis mutandis\rm , to define a noncommutative shuffle algebra structure on $\Sh$, the operad of shuffle algebras. A natural basis of  the latter operad is given by permutations (the result goes back to Sch\"utzenberger, who showed that the tensor algebra over a vector space $V$ is a model of the free shuffle algebra over $V$ \cite{schutz}). Let us stick here to the underlying Hopf algebra structures.

Recall first that the set of packed words (or surjections, or ordered partitions of initial subsets of the integers) $\Surj$ is a basis of ${\QSh}$. As a Hopf algebra, ${\QSh}$ is isomorphic to $\WQSym$, the Hopf algebra of word symmetric functions, see e.g. \cite{foissy2} for references  on the subject. This Hopf algebra structure is obtained as follows.
For all $\sigma\in \Surj_k$, $\tau \in \Surj_l$:
$$\sigma\star\tau=\sum_{\zeta \in QSh_{k,l}} \zeta \circ (\sigma \otimes \tau).$$
For all $\sigma \in \Surj_n$:
$$\Delta(\sigma)=\sum_{k=0}^{max(\sigma)} \sigma_{\mid \{1,\ldots,k\}} \otimes Pack(\sigma_{\mid \{k+1,\ldots,max(\sigma)\}}),$$
where for all $I\subseteq \{1,\ldots,max(\sigma)\}$, $\sigma_{\mid I}$ is the packed word obtained by keeping only the letters of $\sigma$ which belong to $I$.

On the other hand, the set of permutations is a basis of the operad $\Sh$. As a Hopf algebra, the latter identifies with the Malvenuto-Reutenauer Hopf algebra \cite{MR} and with the Hopf algebra of free quasi-symmetric functions $\FQSym$. Its Hopf  structure is obtained as follows.
For all $\sigma \in \S_k$, $\tau \in \S_l$:
$$\sigma\star\tau=\sum_{\zeta \in Sh_{k,l}} \zeta \circ (\sigma \otimes \tau).$$
For all $\sigma \in \S_n$:
$$\Delta(\sigma)=\sum_{k=0}^{max(\sigma)} \sigma_{\mid \{1,\ldots,k\}} \otimes Pack(\sigma_{\mid \{k+1,\ldots,max(\sigma)\}}).$$

There is an obvious surjective Hopf algebra epimorphism $\Xi$ from ${\QSh}$ to $\Sh$, sending a packed word $\sigma$ to itself if $\sigma$ is a permutation,
and to $0$ otherwise. From an operadic point of view, this maps amounts to put to zero the $\bullet$ product. There is however another, non operadic, transformation, relating the two structures.

\begin{theo}
We use the following notations:
\begin{enumerate}
\item Let $\sigma \in \S_n$ and $\tau \in \Surj_n$. We shall say that $\tau \propto \sigma$ if:
$$\forall 1\leq i,j \leq n,(\sigma(i)\leq \sigma(j) \Longrightarrow \tau(i)\leq \tau(j)).$$
\item Let $\tau \in \Surj_n$. We put $\displaystyle \tau!=\prod_{i=1}^{max(\tau)} |\tau^{-1}(\{i\})|!$.
\end{enumerate}
We consider the following map:
$$\Phi:\left\{\begin{array}{rcl}
\Sh&\longrightarrow&{\QSh}\\
\sigma \in \S_n&\longrightarrow&\displaystyle \sum_{\tau \propto \sigma} \frac{\tau}{\tau!}.
\end{array}\right.$$
Then $\Phi$ is an injective Hopf algebra morphism. Moreover it is equivariant: for all $\sigma,\tau \in \S_n$,
$$\Phi(\sigma \circ \tau)=\Phi(\sigma)\circ \tau.$$
\end{theo}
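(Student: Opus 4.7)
My plan is to verify injectivity via a triangularity argument, then equivariance by direct substitution, and finally the Hopf morphism property by separately checking the coproduct and the product. For \textbf{injectivity}, note that for $\sigma \in \S_n$ one has $\sigma \propto \sigma$ trivially with $\sigma! = 1$, and any other $\tau \propto \sigma$ must satisfy $\max(\tau) < n$: if $\max(\tau) = n$ then $\tau \in \S_n$, and the implication $\sigma(i) \leq \sigma(j) \Rightarrow \tau(i) \leq \tau(j)$ together with the bijectivity of $\sigma$ and $\tau$ forces $\tau = \sigma$. Hence $\Phi(\sigma) = \sigma + \sum_{\tau \propto \sigma,\, \max\tau < n} \tau/\tau!$ is upper triangular with unit diagonal in the $\max$-filtration of $\QSh_n$, whence injectivity.

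For \textbf{equivariance}, the defining inequalities for $\rho \propto \sigma$ transform, under the substitution $(i,j) \mapsto (\tau(i), \tau(j))$, into those for $\rho \circ \tau \propto \sigma \circ \tau$, giving a bijection $\rho \leftrightarrow \rho \circ \tau$ of the corresponding index sets. Precomposition by the permutation $\tau$ preserves value multiplicities, so $(\rho \circ \tau)! = \rho!$, and substituting in the defining sum yields $\Phi(\sigma \circ \tau) = \Phi(\sigma) \circ \tau$.

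For \textbf{coproduct compatibility}, expand
$$\Delta(\Phi(\sigma)) = \sum_{\tau \propto \sigma} \frac{1}{\tau!} \sum_{k=0}^{\max\tau} \tau_{\mid [k]} \otimes \mathrm{Pack}(\tau_{\mid \{k+1,\ldots,\max\tau\}}).$$
Since $\tau \propto \sigma$ orders the values of $\tau$ compatibly with $\sigma$, the set $\tau^{-1}([k])$ is the initial segment of size $k'' := |\tau^{-1}([k])|$ in the total order on $[n]$ induced by $\sigma$, namely $\sigma^{-1}([k''])$. This produces a bijection between pairs $(\tau, k)$ and triples $(k'', \tau_1, \tau_2)$ with $\tau_1 \propto \sigma_{\mid [k'']}$ and $\tau_2 \propto \mathrm{Pack}(\sigma_{\mid \{k''+1,\ldots,n\}})$, given by $\tau_1 = \tau_{\mid [k]}$, $\tau_2 = \mathrm{Pack}(\tau_{\mid \{k+1,\ldots,\max\tau\}})$, and compatible with the splitting $\tau! = \tau_1! \tau_2!$. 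This identifies the expansion of $\Delta(\Phi(\sigma))$ with that of $(\Phi \otimes \Phi) \circ \Delta(\sigma)$.

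\textbf{Product compatibility} is the main obstacle. By equivariance applied to the permutation $\sigma \otimes \tau$, the identity $\Phi(\sigma \star \tau) = \Phi(\sigma) \star \Phi(\tau)$ reduces to the special case $\sigma = (1\ldots k)$, $\tau = (1\ldots l)$. In that case the left-hand side becomes $\sum_{\zeta \in Sh_{k,l}} \Phi(\zeta)$, a sum over $(k,l)$-shuffles $\zeta$ and their refinements $\omega \propto \zeta$ weighted by $1/\omega!$, while the right-hand side, expanded via the product formula of Proposition~\ref{7}, is a sum indexed by weakly-increasing packed words $\rho$ on $[k]$ and $\pi$ on $[l]$ (equivalently, compositions of $k$ and $l$) together with a quasi-shuffle $\xi$, weighted by $1/(\rho!\pi!)$. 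The required identity reduces to a combinatorial bijection matching, term by term on each packed word of $[k+l]$, pairs (shuffle, refinement) with triples (refinement of $(1\ldots k)$, refinement of $(1\ldots l)$, quasi-shuffle), together with the verification that the factorial weights collect correctly. This is the only delicate step; conceptually it encodes, at the operadic level, Hoffman's exponential isomorphism between shuffle and quasi-shuffle bialgebras on tensor algebras \cite{Hoffman}, from which product compatibility can alternatively be deduced.
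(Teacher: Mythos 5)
Your arguments for injectivity, equivariance and coproduct compatibility are correct and essentially coincide with the paper's (the paper phrases injectivity via the retraction $\Xi\circ\Phi=Id_{\Sh}$ and equivariance via the reduction $\Phi(\sigma)=\Phi(I_n)\circ\sigma$, but these are the same observations as yours). Your reduction of the product identity to the case $\sigma=I_k$, $\tau=I_l$ via equivariance is also valid and is a mild streamlining of the paper, which treats general $\sigma_1,\sigma_2$ directly.

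However, there is a genuine gap exactly where you flag "the only delicate step": you do not prove the product compatibility, you only describe what would need to be checked. Moreover, the correspondence you would need is not a term-by-term bijection. For a fixed packed word $\omega$ of length $k+l$ lying in the relevant support, the coefficient coming from $\Phi(I_k)\star\Phi(I_l)$ is $1/(\rho!\,\pi!)$ with $\rho=Pack(\omega(1)\ldots\omega(k))$ and $\pi=Pack(\omega(k+1)\ldots\omega(k+l))$ (the factorization $\omega=\xi\circ(\rho\otimes\pi)$ being unique), whereas the coefficient coming from $\Phi(I_k\star I_l)$ is $\frac{1}{\omega!}\,\sharp\{\zeta\in Sh_{k,l}\mid \omega\propto\zeta\}$. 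The whole content of the proof is the counting identity
$$\sharp\{\zeta\in Sh_{k,l}\mid \omega\propto\zeta\}=\frac{\omega!}{\rho!\,\pi!}=\prod_{i=1}^{max(\omega)}\binom{|\omega^{-1}(\{i\})|}{|\rho^{-1}(\{i\})|},$$
which the paper establishes by observing that such a $\zeta$ is determined by choosing, on each fiber $\omega^{-1}(\{i\})$, a bijection onto a prescribed interval that is increasing separately on the first $k$ and the last $l$ positions. Without this multinomial count your argument does not close. Finally, the suggested alternative of deducing the product identity from Hoffman's exponential isomorphism is problematic in the logic of this paper, since the exponential isomorphism (Corollary \ref{11}) is itself derived as a consequence of this theorem; invoking it here would at best require a careful independent argument that you do not supply.
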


\begin{proof} Let $\sigma,\tau \in \S_n$. Then $\tau \propto \sigma$ if, and only if, $\sigma=\tau$. So, for all $\sigma \in \S_n$:
$$\Phi(\sigma)=\sigma+\mbox{linear span of packed words which are not permutations}.$$
So $\Xi \circ \Phi=Id_{\Sh}$, and $\Phi$ is injective. \\

Let $\tau \in \Surj_n$ and $\sigma \in \S_n$. Then $\tau \propto \sigma$ if, and only if, $\tau\circ \sigma^{-1}\propto I_n$.
Moreover, $|\tau \circ \sigma^{-1}|!=\tau!$, as $\sigma$ is a bijection. Hence:
$$\Phi(\sigma)=\sum_{\tau \propto \sigma} \frac{\tau}{\tau!}=\sum_{\rho \propto I_n} \frac{\rho \circ \sigma}{\rho!}=\Phi(I_n)\circ \sigma.$$
More generally, if $\sigma,\tau \in \S_n$, $\Phi(\sigma \circ \tau)=\Phi(I_n)\circ (\sigma \circ \tau)=(\Phi(I_n)\circ \sigma)\circ \tau=\Phi(\sigma)\circ \tau$. \\

Let $\sigma_1 \in \S_{n_1}$ and $\sigma_2 \in \S_{n_2}$.
$$\Phi(\sigma_1)\star\Phi(\sigma_2)=\sum_{\substack {\tau_1\propto \sigma_1,\tau_2\propto\sigma_2 \\ \zeta \in QSh(max(\tau_1),max(\tau_2))}}
\frac{\zeta \circ (\tau_1\otimes \tau_2)}{\tau_1!\tau_2!}.$$
Let $S$ be the set of elements $\sigma \in \Surj_{n_1+n_2}$ such that:
\begin{itemize}
\item For all $1\leq i,j \leq n_1$, $\sigma_1(i)\leq \sigma_1(j)$ $\Longrightarrow$ $\sigma(i)\leq \sigma(j)$.
\item For all $1\leq i,j \leq n_2$, $\sigma_2(i)\leq \sigma_2(j)$ $\Longrightarrow$ $\sigma(i+n_1)\leq \sigma(j+n_2)$.
\end{itemize}

Let $\tau_1\propto \sigma_1$, $\tau_2\propto\sigma_2$ and $\zeta \in QSh(max(\tau_1),max(\tau_2))$. As $\zeta$ is increasing on 
$\{1,\ldots, max(\tau_1)\}$ and $\{max(\tau_1)+1,\ldots,max(\tau_1)+max(\tau_2)\}$, $\zeta \circ (\tau_1\otimes \tau_2)\in S$.
Conversely, if $\sigma \in S$, there exists a unique $\tau_1 \in \Surj_{n_1}$, $\tau_2 \in \Surj_{n_2}$ and $\zeta \in QSh_{max(\tau_1),max(\tau_2)}$
such that $\sigma=\zeta \circ (\tau_1 \otimes \tau_2)$: in particular, $\tau_1=Pack(\sigma(1)\ldots \sigma(n_1))$
and $\tau_2=Pack(\sigma(n_1+1)\ldots \sigma(n_1+n_2))$. As $\sigma \in S$ and $\zeta \in QSh_{max(\tau_1),max(\tau_2)}$,
 $\tau_1\propto \sigma_1$ and $\tau_2 \propto \sigma_2$. Hence:
 $$\Phi(\sigma_1)\star\Phi(\sigma_2)=\sum_{\sigma \in S}\frac{\sigma}{Pack(\sigma(1)\ldots \sigma(n_1))!
 Pack(\sigma(n_1+1)\ldots \sigma(n_1+n_2))!}.$$
 On the other hand:
 $$\Phi(\sigma_1\star\sigma_2)=\sum_{\substack{\zeta \in Sh(n_1,n_2) \\ \tau \propto \zeta \circ (\sigma_1\otimes \sigma_2)}} \frac{\tau}{\tau!}.$$
 Let $\zeta \in Sh(n_1,n_2)$ and $\tau \propto \zeta \circ (\sigma_1\otimes \sigma_2)$. If $1\leq i,j \leq n_1$ and $\sigma_1(i)\leq \sigma_1(j)$, then:
 $$\zeta \circ (\sigma_1 \otimes \sigma_2)(i)=\zeta(\sigma_1(i)) \leq \zeta(\sigma_1(j))=\zeta \circ (\sigma_1 \otimes \sigma_2)(i),$$
 so $\tau(i)\leq \tau(j)$. If $1\leq i,j \leq n_2$ and $\sigma_2(i) \leq \sigma_2(j)$, then:
 $$\zeta \circ (\sigma_1 \otimes \sigma_2)(i+n_1)=\zeta(\sigma_2(i)+max(\sigma_1))
 \leq \zeta(\sigma_2(j)+max(\sigma_1))=\zeta \circ (\sigma_1 \otimes \sigma_2)(j+n_1),$$
 so $\tau(i+n_1)\leq \tau(j+n_2)$. Hence, $\tau \in S$ and finally:
 $$\Phi(\sigma_1\star\sigma_2)=\sum_{\tau \in S} \frac{\tau}{\tau!} \sharp\{\zeta \in Sh(n_1,n_2)\mid
 \tau \propto \zeta \circ (\sigma_1 \otimes \sigma_2) \}.$$
 Let $\tau \in S$. We put $\tau_1=(\tau(1)\ldots \tau(n_1))$ and $\tau_2=(\tau(n_1+1)\ldots \tau(n_1+n_2))$.
 Let $\zeta \in Sh(n_1,n_2)$, such that $\tau \propto \zeta \circ (\sigma_1 \otimes \sigma_2)$. For all $1\leq i\leq max(\tau)$,
$\zeta(\tau^{-1}(\{i\}))=I_i$ is entirely determined and does not depend on $\zeta$. By the increasing conditions on $\zeta$, the determination of such a $\zeta$ consists of
choosing for all $1\leq i\leq max(\tau)$ a bijective map $\zeta_i$ from $\tau^{-1}(\{i\}) $ to $I_i$, such that $\zeta_i$ is increasing
on $\tau^{-1}(\{i\}) \cap \{1,\ldots, n_1\}=\tau_1^{-1}(\{i\})$ and on $\tau^{-1}(\{i\}) \cap \{n_1+1,\ldots, n_1+n_2\}=\tau_2^{-1}(\{i\})$.
Hence, the number of possibilities for $\zeta$ is:
\begin{align*}
&\prod_{i=1}^{max(\tau)} \frac{|\tau^{-1}(i)|!}{|\tau_1^{-1}(\{i\})|!|\tau_2^{-1}(\{i\})|!}\\
&=\frac{\displaystyle \prod_{i=1}^{max(\tau)}|\tau^{-1}(\{i\})|!}{\displaystyle \prod_{i=1}^{max(\tau_1)}|\tau_1^{-1}(\{i\})|!
\prod_{i=1}^{max(\tau_2)}|\tau_2^{-1}(\{i\})|!}\\
&=\frac{\displaystyle \prod_{i=1}^{max(\tau)}|\tau^{-1}(\{i\})|!}
{\displaystyle \prod_{i=1}^{max(Pack(\tau_1))}|Pack(\tau_1)^{-1}(\{i\})|!
\prod_{i=1}^{max(Pack(\tau_2))}|Pack(\tau_2)^{-1}(\{i\})|!}\\
&=\frac{\tau!}{Pack(\tau_1)!Pack(\tau_2)!}.
\end{align*}
Hence:
\begin{align*}
\Phi(\sigma_1\star\sigma_2)&=\sum_{\tau \in S} \frac{\tau}{\tau!} \frac{\tau!}{Pack(\tau(1)\ldots \tau(n_1))!
 Pack(\tau(n_1+1)\ldots \tau(n_1+n_2))!}\\
 &=\Phi(\sigma_1)\star\Phi(\sigma_2).
\end{align*}
So  $\Phi$ is an algebra morphism.  \\

Let $\sigma \in \S_n$. 
\begin{align*}
&\Delta(\Phi(\sigma))\\
&=\sum_{\tau \propto \sigma}\sum_{k=0}^{max(\tau)}
\frac{1}{\tau!} \tau_{\mid\{1,\ldots,k\}}\otimes Pack(\tau_{\mid\{k+1,\ldots,max(\tau)\}}\\
&=\sum_{\tau \propto \sigma}\sum_{k=0}^{max(\tau)}\frac{1}{ \tau_{\mid\{1,\ldots,k\}}! Pack(\tau_{\mid\{k+1,\ldots,max(\tau)\}}!}
\tau_{\mid\{1,\ldots,k\}}\otimes Pack(\tau_{\mid\{k+1,\ldots,max(\tau)\}}\\
&=\sum_{k=0}^n \sum_{\substack{\tau_1\propto \sigma_{\mid \{1,\ldots,k\}}\\ \tau_2 \propto Pack(\sigma_{\mid \{k+1,\ldots,n\}})}}
\frac{\tau_1}{\tau_1!} \otimes \frac{\tau_2}{\tau_2!}\\
&=(\Phi \otimes \Phi)\circ \Delta(\sigma).
\end{align*}
Hence, $\Phi$ is a coalgebra morphism. \end{proof} 
 
{\bf Examples.}
\begin{align*}
\Phi((1))&=(1),\\ 
\Phi((12))&=(12)+\frac{1}{2}(11),\\ 
\Phi((123))&=(123)+\frac{1}{2}(112)+\frac{1}{2}(122)+\frac{1}{6}(111),\\ 
\Phi((1234))&=(1234)+\frac{1}{2}(1123)++\frac{1}{2}(1223)++\frac{1}{2}(1233)\\
&+\frac{1}{4}(1122)+\frac{1}{6}(1112)+\frac{1}{6}(1222)+\frac{1}{24}(1111).
\end{align*}
More generally:
$$\Phi((1\ldots n))=\sum_{k=1}^n \sum_{i_1+\ldots+i_k=n} \frac{1}{i_1!\ldots i_k!} (1^{i_1}\ldots k^{i_k}).$$

{\bf Remark.}  The map $\Phi$ is not a morphism of NSh algebras from $(\Sh,\prec,\succ)$ to $({\QSh},\preceq,\succ)$,
nor to $({\QSh},\prec,\succeq)$. Indeed:
\begin{align*}
\Phi((1)\prec(1))&=(12)+\frac{1}{2}(11),\\
\Phi((1))\prec\Phi((1))&=(12),\\
\Phi((1))\preceq \Phi((1))&=(12)+(11).
\end{align*}

We extend the map $\sigma\longrightarrow F_\sigma$ into a linear map from ${\QSh}$ to $End(T(V))$. By proposition \ref{7}, $F$ is an algebra morphism.

\begin{cor}[Exponential isomorphism]\label{11}
Le us consider the following linear map:
$$\phi:\left\{\begin{array}{rcl}
T(V)&\longrightarrow&T(V)\\
x_1\ldots x_n&\longrightarrow& F_{\Phi(I_n)}(x_1\ldots x_n).
\end{array}\right.$$
Then $\phi$ is a Hopf algebra isomorphism from $(T(V),\shuffle,\Delta)$ to $(T(V),\qshuffle,\Delta)$.
\end{cor}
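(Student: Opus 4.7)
The plan is to identify $\phi$ concretely and then verify, in turn, bijectivity, the intertwining of products, and the intertwining of coproducts. First, observe that the non-decreasing packed words are exactly the $\tau\in\Surj_n$ with $\tau\propto I_n$, so
$$\phi(x_1\ldots x_n)=\sum_{\tau\propto I_n}\frac{1}{\tau!}F_\tau(x_1\ldots x_n),$$
which expands, after grouping by the composition of $n$ recording the block sizes of $\tau$, into the usual Hoffman exponential formula. The term $\tau=I_n$ contributes $x_1\ldots x_n$ with coefficient one, and every other non-decreasing $\tau$ has $\max(\tau)<n$, so contributes a word of strictly smaller length. Thus $\phi$ is upper-triangular with respect to the tensor-length filtration of $T(V)$, and hence is bijective.

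For the algebra-morphism property $\phi(u\shuffle v)=\phi(u)\qshuffle\phi(v)$, with $u\in V^{\otimes k}$ and $v\in V^{\otimes l}$, the key auxiliary identity I will use is
$$F_\sigma(u)\qshuffle F_\tau(v)=F_{\sigma\star\tau}(uv),\qquad\sigma\in\Surj_k,\ \tau\in\Surj_l,$$
where $uv$ is the concatenation. This comes out of expanding the left-hand side via $F_\sigma\qshuffle F_\tau=\qshuffle\circ(F_\sigma\otimes F_\tau)\circ\Delta$: only the deconcatenation that splits $uv$ at position $k$ survives, since $F_\sigma$ and $F_\tau$ kill words of the wrong length, and Proposition \ref{7} then identifies $F_\sigma\qshuffle F_\tau$ with $F_{\sigma\star\tau}$. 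Combining this identity with the equivariance $\Phi(I_{k+l})\circ\zeta=\Phi(\zeta)$ and the algebra-morphism property $\Phi(I_k)\star\Phi(I_l)=\Phi(I_k\star I_l)=\sum_{\zeta\in Sh_{k,l}}\Phi(\zeta)$ from the preceding theorem yields the chain
\begin{align*}
\phi(u)\qshuffle\phi(v)&=F_{\Phi(I_k)\star\Phi(I_l)}(uv)=\sum_{\zeta\in Sh_{k,l}}F_{\Phi(\zeta)}(uv)\\
&=\sum_{\zeta\in Sh_{k,l}}F_{\Phi(I_{k+l})}(F_\zeta(uv))=\phi(u\shuffle v),
\end{align*}
where the last line uses $u\shuffle v=\sum_{\zeta\in Sh_{k,l}}F_\zeta(uv)$ and the composition rule $F_{\Phi(I_{k+l})}\circ F_\zeta=F_{\Phi(I_{k+l})\circ\zeta}$ from Proposition \ref{7}.

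For the coalgebra-morphism property, the crucial fact is that a non-decreasing surjection $\tau$ satisfies $\tau^{-1}(\{1,\ldots,j\})=\{1,\ldots,i_j\}$ for some $i_j$, so deconcatenating the word $F_\tau(x_1\ldots x_n)$ between positions $j$ and $j+1$ amounts to splitting the \emph{original} word at position $s=i_j$; moreover the two halves of $\tau$ are themselves non-decreasing and the product rule $\tau!=\tau_1!\,\tau_2!$ holds. Regrouping the double sum defining $\Delta\phi(x_1\ldots x_n)$ by the cut position $s$ then gives $(\phi\otimes\phi)\Delta(x_1\ldots x_n)$ directly. The main obstacle is really the auxiliary identity $F_\sigma(u)\qshuffle F_\tau(v)=F_{\sigma\star\tau}(uv)$ of step two; once it, together with the Hopf-algebra-morphism property and equivariance of $\Phi$ secured by the previous theorem, is in hand, the rest is a bookkeeping exercise.
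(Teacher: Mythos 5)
Your proof is correct and follows essentially the same route as the paper: the same triangularity argument for bijectivity, and for the product the same chain $F_{\Phi(I_k)}\qshuffle F_{\Phi(I_l)}=F_{\Phi(I_k)\star\Phi(I_l)}=\sum_{\zeta}F_{\Phi(I_{k+l})}\circ F_\zeta$ (merely run in the opposite direction), resting on Proposition~\ref{7} together with the equivariance and algebra-morphism properties of $\Phi$. The only cosmetic difference is the coproduct step, where you redo the block/factorial bookkeeping for non-decreasing surjections directly instead of invoking, via the operators $G_{\sigma\otimes\tau}$ as the paper does, the already-established identity $\Delta\circ\Phi=(\Phi\otimes\Phi)\circ\Delta$ --- but that is exactly the computation that proved that identity in the preceding theorem.
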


\begin{proof} Let $x_1,\ldots,x_{k+l} \in V$. 
\begin{align*}
\phi(x_1\ldots x_k \shuffle x_{k+1}\ldots x_{k+l})&=\sum_{\zeta \in Sh(k,l)} F_{\Phi(I_{k+l})}\circ F_\zeta(x_1\ldots x_{k+l})\\
&=\sum_{\zeta \in Sh(k,l)} F_{\Phi(I_{k+l})\circ \zeta}(x_1\ldots x_{k+l})\\
&=\sum_{\zeta \in Sh(k,l)} F_{\Phi(\zeta)}(x_1\ldots x_{k+l})\\
&=F_{\Phi(I_k\star I_l)}(x_1\ldots x_{k+l})\\
&=F_{\Phi(I_k)\star\Phi(I_l)}(x_1\ldots x_{k+l})\\
&=F_{\Phi(I_k)}\qshuffle F_{\Phi(I_l)}(x_1\ldots x_{k+l})\\
&=\sum_{i=0}^{k+l} F_{\Phi(I_k)}(x_1\ldots x_i) \qshuffle F_{\Phi(I_l)}(x_{i+1}\ldots x_{k+l})\\
&= F_{\Phi(I_k)}(x_1\ldots x_k) \qshuffle F_{\Phi(I_l)}(x_{k+1}\ldots x_{k+l})\\
&=\phi(x_1\ldots x_k) \qshuffle \phi(x_{k+1}\ldots x_l).
\end{align*}
So $\phi$ is an algebra morphism.  \\

For any packed words $\sigma \in \Surj_k$, $\tau \in \Surj_l$ 
and all $x_1,\ldots,x_n \in V$
 we define $G_{\sigma \otimes \tau}$ by:
$$G_{\sigma \otimes \tau}(x_1\ldots x_n)= F_\sigma(x_1\ldots x_k) \otimes F_\tau(x_{k+1}\ldots x_{n})
$$
is $k+l=n$ and $=0$ else.
Then, for all increasing packed word $\sigma$, for all $x\in T(V)$:
$$\Delta (F_\sigma(x))=G_{\Delta(\sigma)}(x).$$
Hence, if $x_1,\ldots,x_n \in V$:
\begin{align*}
\Delta \circ \phi(x_1\ldots x_n)&=G_{\Delta(\Phi(I_n))}(x_1\ldots x_n)\\
&=G_{(\Phi\otimes \Phi)\circ \Delta(I_n)}(x_1\ldots x_n)\\
&=\sum_{k=0}^n G_{\Phi(I_k)\otimes \Phi(I_{n-k})}(x_1\ldots x_n)\\
&=\sum_{k=0}^n F_{\Phi(I_k)}(x_1\ldots x_k) \otimes F_{\Phi(I_{n-k})}(x_{k+1}\ldots x_n)\\
&=\sum_{k=0}^n \phi(x_1\ldots x_k) \otimes \phi(x_{k+1}\ldots x_n)\\
&=(\phi \otimes \phi)\circ \Delta(x_1\ldots x_n).
\end{align*}
So $\phi$ is a coalgebra morphism.\\

As the unique bijection appearing in $\Phi(I_n)$ is $I_n$, for all word $x_1\ldots x_n$:
$$\phi(x_1\ldots x_n)=x_1\ldots x_n+\mbox{linear span of words of length}<n.$$
So $\phi$ is a bijection.  \end{proof} 

{\bf Examples.} Let $x_1,x_2,x_3,x_4 \in V$.
\begin{align*}
\phi(x_1)&=x_1,\\
\phi(x_1x_2)&=x_1x_2+\frac{1}{2}x_1.x_2,\\
\phi(x_1x_2x_3)&=x_1x_2x_3+\frac{1}{2}(x_1.x_2)x_3+\frac{1}{2}x_1(x_2.x_3)+\frac{1}{6}x_1.x_2.x_3,\\
\phi(x_1x_2x_3x_4)&=x_1x_2x_3x_4+\frac{1}{2}(x_1.x_2)x_3x_4+\frac{1}{2}x_1(x_2.x_3)x_4\\
&+\frac{1}{2}x_1x_2(x_3.x_4)+\frac{1}{4}(x_1.x_2)(x_3.x_4)+\frac{1}{6}(x_1.x_2.x_3)x_4\\
&+\frac{1}{6}x_1(x_2.x_3.x_4) +\frac{1}{24}x_1.x_2.x_3.x_4.
\end{align*}
More generally, for all $x_1,\ldots, x_n \in V$:
$$\phi(x_1\ldots x_n)=\sum_{k=1}^n \sum_{i_1+\ldots+i_k=n} \frac{1}{i_1!\ldots i_k!}F_{(1^{i_1}\ldots k^{i_k})}(x_1\ldots x_n).$$

{\bf Remarks.} \begin{enumerate}
\item This isomorphism is the morphism denoted by $\exp$ and  obtained in the graded case by Hoffman in \cite{Hoffman}.
\item If $V$ is a trivial algebra, then $\phi=Id_{T(V)}$.
\item This morphism is not a NSh algebra morphism, except if $V$ is a trivial algebra. In fact, except if the product of $V$
is zero, the NSh algebras $(T(V),\preceq,\succ)$ and $(T(V),\prec,\succeq)$ are not commutative, so cannot be isomorphic to a shuffle algebra.
\end{enumerate}

\section{Coalgebra and Hopf algebra endomorphisms}\label{sect:8}

In the previous section, we studied the links between shuffle and quasi-shuffle operads  and obtained as a corollary the exponential isomorphism of Cor. \ref{11} between the shuffle and quasi-shuffle Hopf algebra structures on $T(V)$. This section aims at classifying all such possible (natural, i.e. functorial in commutative algebras $V$) morphisms. We refer to our \cite{foissy2} for applications of natural coalgebra endomorphisms to the study of deformations of shuffle bialgebras.

Recall that we defined $\pi$ as the unique linear endomorphism of the quasi-shuffle bialgebra $T^+(V)$ such that $\pi+\pi \prec Id_{T^+(V)}=Id_{T^+(V)}$.
By proposition \ref{8}, it is equal to $F_{(1)}$, so is the canonical projection on $V$. This construction generalizes as follows. 

Hereafter, we work in the unital setting and write $\varepsilon$  for the canonical projection from $T(V)$ to the scalars (the augmentation map). It behaves as a unit w.r.t. the NQSh products on $End(T^+(V))$: for $g\in End(T^+(V))$, $\varepsilon\prec g=0,\ g\prec \varepsilon=g$.

\begin{prop}\label{12}
Let $f:T(V)\longrightarrow V$ be a linear map such that $f(1)=0$. There exists a unique coalgebra endomorphism $\psi$ of $T(V)$ such that
$\pi \circ \psi=f$. This coalgebra endomorphism is the unique linear endomorphism of $T(V)$ such that $\varepsilon +f\prec \psi=\psi$.
\end{prop}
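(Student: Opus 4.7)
The plan is to establish existence and uniqueness of a solution to the fixed point equation $\psi = \varepsilon + f \prec \psi$, identify this solution with the standard coalgebra morphism coming from the cofreeness of $(T(V),\Delta)$, and then use Sch\"utzenberger's trick to show that every coalgebra endomorphism $\widetilde\psi$ with $\pi\circ\widetilde\psi = f$ satisfies the same fixed point equation, hence coincides with $\psi$.

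Unfolding the equation with the full (unital) deconcatenation $\Delta$, the conventions $1\prec y=0$, $y\prec 1=y$, and Hoffman's rule $a\prec w=aw$ for $a\in V$ (the special case $v=1$ of $av\prec bw=a(v\qshuffle bw)$), one gets the explicit length-graded recursion
\[
\psi(1)=1,\qquad \psi(x_1\ldots x_n)=f(x_1\ldots x_n)+\sum_{i=1}^{n-1} f(x_1\ldots x_i)\cdot \psi(x_{i+1}\ldots x_n),
\]
which determines $\psi$ uniquely by induction on $n$. Iterating the recursion yields the closed form
\[
\psi(x_1\ldots x_n)=\sum_{j_1+\cdots+j_k=n} f(x_1\ldots x_{j_1})\cdot f(x_{j_1+1}\ldots x_{j_1+j_2})\cdots f(x_{j_1+\cdots+j_{k-1}+1}\ldots x_n),
\]
the sum running over compositions of $n$ (tuples of positive integers). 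This is the well-known cofreeness formula on the deconcatenation coalgebra $T(V)$: a direct check using the unique splitting of a composition of $n$ into a composition of some $j$ followed by one of $n-j$ (matching the deconcatenation of $x_1\ldots x_n$) gives $\Delta\circ\psi=(\psi\otimes\psi)\circ\Delta$, while $\pi\circ\psi=f$ is immediate since only the $k=1$, $j_1=n$ summand has length one.

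For uniqueness among coalgebra endomorphisms, I would invoke the unital version of Propositions \ref{4} and \ref{8}, namely the identity $\mathrm{Id}_{T(V)}=\varepsilon+\pi\prec\mathrm{Id}_{T(V)}$ --- Sch\"utzenberger's trick rewritten with the counital coproduct, in which $\varepsilon$ plays the role of unit for $\prec$ noted just before the statement. If $\widetilde\psi$ is any coalgebra endomorphism with $\pi\circ\widetilde\psi=f$, then applying this identity to $y=\widetilde\psi(x)$ and using $\Delta\circ\widetilde\psi=(\widetilde\psi\otimes\widetilde\psi)\circ\Delta$ together with $\varepsilon\circ\widetilde\psi=\varepsilon$ yields
\[
\widetilde\psi(x)=\varepsilon(x)+\pi(\widetilde\psi(x_{(1)}))\prec\widetilde\psi(x_{(2)})=\varepsilon(x)+f(x_{(1)})\prec\widetilde\psi(x_{(2)})=(\varepsilon+f\prec\widetilde\psi)(x),
\]
so $\widetilde\psi$ also satisfies the fixed point equation and must equal $\psi$. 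The only real subtlety is bookkeeping the transition from the non-counital framework of Propositions \ref{4} and \ref{8}, where $\pi$ acts as the unit for $\prec$ on $\mathrm{End}(T^+(V))$, to the present counital setting, where $\varepsilon$ takes over that role; after that adjustment, every step is a routine verification.
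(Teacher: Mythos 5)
Your proposal is correct, and it reorganizes the argument around the same fixed-point equation $\psi=\varepsilon+f\prec\psi$ that the paper uses, but the two verifications attached to it are carried out differently. The paper proves that the solution of the recursion is a coalgebra morphism by a direct induction on word length (computing $\tdelta\circ\psi(x)$ from $\psi(x)=f(x)+f(x')\prec\psi(x'')$), and proves uniqueness among coalgebra endomorphisms by a separate induction: the difference $\psi_1(x_1\ldots x_n)-\psi_2(x_1\ldots x_n)$ is primitive, hence lies in $V$, hence is killed by applying $\pi$. You instead unfold the recursion into the closed cofreeness formula indexed by compositions of $n$ and read off both $\Delta\circ\psi=(\psi\otimes\psi)\circ\Delta$ and $\pi\circ\psi=f$ from it, and you obtain uniqueness by showing that \emph{every} coalgebra endomorphism $\widetilde\psi$ with $\pi\circ\widetilde\psi=f$ satisfies the same fixed-point equation, via the identity $Id_{T(V)}=\varepsilon+\pi\prec Id_{T(V)}$ (the unital form of Propositions \ref{4} and \ref{8}) together with the morphism property. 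Both routes are complete; yours is more conceptual and makes the role of cofreeness of the deconcatenation coalgebra explicit, while the paper's inductions are more self-contained and avoid having to state the closed formula. Two small points of bookkeeping on your side: the identity $Id_{T(V)}=\varepsilon+\pi\prec Id_{T(V)}$ does need the one-line check that the only surviving term of $(\pi\prec Id)(x_1\ldots x_n)$ is $x_1\prec x_2\ldots x_n=x_1\ldots x_n$ (which is exactly the computation behind Proposition \ref{8}); and the step $\varepsilon\circ\widetilde\psi=\varepsilon$ uses counitality of $\widetilde\psi$, which is the same implicit normalization the paper makes when it restricts to nonzero coalgebra endomorphisms and deduces $\widetilde\psi(1)=1$ from group-likeness. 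Neither is a gap.
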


\begin{proof} {\it First step.} Let us prove the unicity of the coalgebra morphism $\psi$ such that $\pi \circ \psi=f$.
Let $\psi_1,\psi_2$ be two (non zero) coalgebra endomorphisms such that $\pi\circ\psi_1=\pi \circ \psi_2$. Let us prove that for all $x_1,\ldots,x_n \in V$,
$\psi_1(x_1\ldots x_n)=\psi_2(x_1\ldots x_n)$ by induction on $n$. If $n=1$, as $\psi_1(1)$ and $\psi_2(1)$ are both nonzero group-like elements,
they are both equal to $1$. Let us assume the result at all rank $<n$. Then:
\begin{align*}
\Delta\circ \psi_1(x_1\ldots x_n)&=(\psi_1\otimes \psi_1)\circ \Delta(x_1\ldots x_n)\\
&=\psi_1(x_1\ldots x_n)\otimes 1+1\otimes \psi_1(x_1\ldots x_n)\\
&+\sum_{i=1}^{n-1}\psi_1(x_1\ldots x_i)\otimes
\psi_1(x_{i+1}\ldots x_n),\\
\Delta\circ \psi_2(x_1\ldots x_n) &=\psi_2(x_1\ldots x_n)\otimes 1+1\otimes \psi_2(x_1\ldots x_n)\\
&+\sum_{i=1}^{n-1}\psi_2(x_1\ldots x_i)\otimes   \psi_2(x_{i+1}\ldots x_n).
\end{align*}
Applying the induction hypothesis, for all $i\leq 1\leq n-1$, $\psi_1(x_1\ldots x_i)=\psi_2(x_1\ldots x_i)$
and $\psi_1(x_{i+1}\ldots x_n)=\psi_2(x_{i+1}\ldots x_n)$. Consequently, $\psi_1(x_1\ldots x_n)-\psi_2(x_1\ldots x_n)$ is primitive,
so belongs to $V$ and:
$$\psi_1(x_1\ldots x_n)-\psi_2(x_1\ldots x_n)=\pi \circ \psi_1(x_1\ldots x_n)-\pi \circ \psi_2(x_1\ldots x_n)=0.$$

{\it Second step.} Let us prove the existence of a (necessarily unique) endomorphism $\psi$ such that $\psi=\varepsilon +f\prec \psi$. We construct $\psi(x_1\ldots x_n)$ 
for all $x_1,\ldots,x_n \in V$ by induction on $n$ in the following way: $\psi(1)=1$ and, if $n\geq 1$:
$$\psi(x_1\ldots x_n):=f(x_1\ldots f_n)+\sum_{i=1}^{n-1}f(x_1\ldots x_i)\prec \psi(x_{i+1}\ldots x_n).$$
Then $(\varepsilon +f\prec \psi)(1)=\varepsilon(1)=1=\psi(1)$. If $n \geq 1$:
\begin{align*}
&(\varepsilon +f\prec \psi)(x_1\ldots x_n)\\
&=\varepsilon(x_1\ldots x_n)+f(x_1\ldots x_n)+\sum_{i=1}^{n-1} f(x_1\ldots x_i)\prec \psi(x_{i+1}\ldots x_n)\\
&=0+f(x_1\ldots x_n)+\sum_{i=1}^{n-1}  f(x_1\ldots x_i)\prec \psi(x_{i+1}\ldots x_n)\\
&=\psi(x_1\ldots x_n).
\end{align*}
Hence, $\varepsilon +f\prec \psi=\psi$. \\

{\it Third step.} Let $\psi$ such that $\varepsilon +f\prec \psi=\psi$.
Let us prove that $\Delta\circ \psi(x_1\ldots x_n)=(\psi\otimes \psi)\circ \Delta(x_1\ldots x_n)$ by induction on $n$. If $n=0$,
then $\psi(1)=\varepsilon(1)+f(1)=1+0=1$, so $\Delta\circ \psi(1)=(\psi \otimes \psi) \circ \Delta(1)=1\otimes 1$. 
If $n \geq 1$, we put $x=x_1\ldots x_n$, $\Delta(x)=x\otimes 1+1\otimes x+x'\otimes x''$. The induction hypothesis holds for $x''$.
Moreover:
$$\psi(x)=\varepsilon(x)+f(x)+f(x')\prec \psi(x'')=f(x)+f(x')\prec \psi(x'').$$
As $f(x) , f(x')\in  V$ are primitive:
\begin{align*}
\tdelta\circ \psi(x)&=f(x')\otimes \psi(x'')+f(x')\prec \psi(x'')'\otimes \psi(x'')'\\
&=f(x')\otimes \psi(x'')+f(x')\prec \psi(x'')\otimes \psi(x''')\\
&=\psi(x')\otimes \psi(x'')\\
&=(\psi \otimes \psi)\circ \tdelta(x).
\end{align*}
As $\psi(1)=1$, we deduce that $\Delta \circ \psi(x)=(\psi \otimes \psi)\circ \Delta(x)$. So $\psi$ is a coalgebra morphism. 
Moreover, $\pi \circ \psi(1)=\pi(1)=0=f(1)$. If $\varepsilon(x)=0$:
$$\pi\circ \psi(x)=\pi\circ f(x)+\pi(f(x')\prec f(x''))=f(x),$$
as $f(x),(x')\in V$ (so $f(x')\prec f(x'')$ is a linear span of words of length $\geq 2$, so vanishes under the action of $\pi$).
Hence, $\pi \circ \psi=f$. \end{proof}

\begin{prop}
Let $\displaystyle A=\sum_{n\geq 1} a_n X^n$ be a formal series without constant term. Let $f_A$ be the linear map from $T(V)$ to $V$ defined
by $f_A(x_1\ldots x_n)=a_n x_1\bullet \ldots \bullet x_n$ and let $\phi_A$ be the unique coalgebra endomorphism of $T(V)$ such that
$\pi\circ \phi_A=f_A$. For all $x_1,\ldots,x_n \in V$:
\begin{equation}
\label{E29}\phi_A(x_1\ldots x_n)=\sum_{k=1}^n \sum_{i_1+\ldots+i_k=n} a_{i_1}\ldots a_{i_k}
F_{(1^{i_1}\ldots k^{i_k})}(x_1\ldots x_n).
\end{equation}\end{prop}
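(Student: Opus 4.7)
The plan is to use Proposition \ref{12}: the coalgebra endomorphism $\phi_A$ is the unique linear map satisfying the fixed-point equation $\phi_A = \varepsilon + f_A \prec \phi_A$, which unfolds (on a length-$n$ word, $n\geq 1$) as the recursion
$$\phi_A(x_1\ldots x_n) = f_A(x_1\ldots x_n) + \sum_{i=1}^{n-1} f_A(x_1\ldots x_i)\prec \phi_A(x_{i+1}\ldots x_n).$$
So it suffices to check that the right-hand side $\Psi(x_1\ldots x_n)$ of (\ref{E29}) satisfies the same recursion, and then invoke uniqueness. I would proceed by induction on $n$, with the base case $n=1$ immediate since the only composition is $(1)$ and $F_{(1)}(x_1)=x_1=f_A(x_1)/a_1$.

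The key computational input is Hoffman's formula for $\prec$ on tensors, specialized to the case where the left factor is a \emph{single} letter: for $v\in V$ and any $w\in T^+(V)$, one has $v\prec w = v\cdot w$ (simple concatenation), because the tail of $v$ is empty, so $v\qshuffle w = w$. Since $f_A(x_1\ldots x_i) = a_i\,(x_1\bullet\cdots\bullet x_i)$ is a single letter of $V$, applying $\prec\phi_A(x_{i+1}\ldots x_n)$ amounts to prepending this letter to each term in $\phi_A(x_{i+1}\ldots x_n)$.

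Applying the inductive hypothesis to the shorter word $x_{i+1}\ldots x_n$, the term $f_A(x_1\ldots x_i)\prec \phi_A(x_{i+1}\ldots x_n)$ becomes
$$\sum_{k'\geq 1}\sum_{j_1+\cdots+j_{k'}=n-i} a_i\,a_{j_1}\cdots a_{j_{k'}}\,F_{(1^{i}\,2^{j_1}\,\ldots\,(k'+1)^{j_{k'}})}(x_1\ldots x_n),$$
where the translation of the surjection is forced by the fact that prepending a letter shifts all values by $1$ and inserts $i$ copies of $1$ at the front. Summing over $i=1,\dots,n-1$ and renaming $i_1:=i$, $k:=k'+1$, one recovers exactly the contribution of compositions of $n$ with $k\geq 2$ parts to $\Psi(x_1\ldots x_n)$. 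The remaining $k=1$ term is $a_n F_{(1^n)}(x_1\ldots x_n) = f_A(x_1\ldots x_n)$, matching the first summand of the recursion.

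The main obstacle, if any, is purely bookkeeping: one must verify carefully the reindexing of the surjections under prepending, i.e.\ that $F_{(1^{j_1}\ldots k'^{j_{k'}})}$ acting on $x_{i+1}\ldots x_n$, followed by prepending $a_i(x_1\bullet\cdots\bullet x_i)$, coincides with $a_i\,F_{(1^i\,2^{j_1}\,\ldots\,(k'+1)^{j_{k'}})}$ acting on $x_1\ldots x_n$. This is immediate from the definition of $F_\sigma$ (the preimage $\sigma^{-1}(\{1\})=\{1,\dots,i\}$ picks out precisely the first block, which is $\bullet$-multiplied, and the remaining blocks are shifted by one level). Once this is observed, the identification of $\Psi$ with $\phi_A$ follows by uniqueness in Proposition \ref{12}, completing the proof.
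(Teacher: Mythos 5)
Your proof is correct and follows essentially the same route as the paper: both verify that the right-hand side of (\ref{E29}) satisfies the fixed-point equation $\varepsilon+f_A\prec\phi=\phi$ and conclude by the uniqueness statement of Proposition \ref{12}. The only cosmetic difference is that you carry out the key step via Hoffman's concatenation identity $v\prec w=vw$ for a single letter $v$, while the paper phrases the same fact as $F_{(1^i)}\prec F_{(1^{i_2}\ldots (k-1)^{i_k})}=F_{(1^i2^{i_2}\ldots k^{i_k})}$ at the level of the operators $F_\sigma$.
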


\begin{proof} Note that $f_A(x_1\ldots x_n)=a_n F_{(1^n)}(x_1\ldots x_n)$. Let $\phi$ be the morphism defined by the second member
of (\ref{E29}). Then $(\varepsilon +f_A\prec \phi)(1)=1+f_A(1)=1=\phi(1)$. If $n \geq 1$:
\begin{align*}
&(\varepsilon +f_A\prec \phi)(x_1\ldots x_n)\\
&=f_A(x_1\ldots x_n)+\sum_{i=1}^{n-1} f_A(x_1\ldots x_i)\prec \phi(x_{i+1}\ldots x_n)\\
&=a_n F_{(1^n)}(x_1\ldots x_n)\\
&+\sum_{i=1}^{n-1}\sum_{k=2}^n \sum_{i_2+\ldots+i_k=n-i}a_ia_{i_2}\ldots a_{i_k} F_{(1^i)}\prec F_{(1^{i_2}\ldots (k-1)^{i_k})}(x_1\ldots x_n)\\
&=a_n F_{(1^n)}(x_1\ldots x_n)\\
&+\sum_{i=1}^{n-1}\sum_{k=2}^n \sum_{i+i_2+\ldots+i_k=n}a_ia_{i_2}\ldots a_{i_k} \prec F_{(1^i2^{i_2}\ldots k^{i_k})}(x_1\ldots x_n)\\
&=\phi(x_1\ldots x_n).
\end{align*}
By unicity in proposition \ref{12}, $\phi=\phi_A$. \end{proof} 

{\bf Remark.} The morphism $\phi$ defined in corollary \ref{11} is $\phi_{exp(X)-1}$.

\begin{prop}
$\phi_X=Id$ and for all formal series $A,B$ without constant terms, $\phi_A\circ \phi_B=\phi_{A\circ B}$.
\end{prop}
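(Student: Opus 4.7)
My plan is to use the uniqueness part of Proposition \ref{12} twice: once for $\phi_X$, once for $\phi_A\circ \phi_B$. In each case, I produce a coalgebra endomorphism whose composition with the canonical projection $\pi$ gives the correct map to $V$.

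For $\phi_X=\mathrm{Id}$, note that when $A=X$ the coefficients are $a_1=1$ and $a_n=0$ for $n>1$, so the map $f_X:T(V)\longrightarrow V$ defined in the previous proposition coincides with $\pi=F_{(1)}$. Since $\mathrm{Id}_{T(V)}$ is a coalgebra endomorphism satisfying $\pi\circ \mathrm{Id}=\pi=f_X$, the uniqueness statement in Proposition \ref{12} forces $\phi_X=\mathrm{Id}$.

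For the composition, observe that $\phi_A\circ\phi_B$ is a composition of coalgebra endomorphisms, hence a coalgebra endomorphism of $T(V)$. By the uniqueness in Proposition \ref{12} applied to the linear map $f_{A\circ B}$, it suffices to prove that $\pi\circ(\phi_A\circ\phi_B)=f_{A\circ B}$. I will compute the left-hand side on a word $x_1\ldots x_n$ using the explicit formula (\ref{E29}). First,
\[
\phi_B(x_1\ldots x_n)=\sum_{k=1}^{n}\sum_{i_1+\cdots+i_k=n}b_{i_1}\cdots b_{i_k}\,F_{(1^{i_1}\ldots k^{i_k})}(x_1\ldots x_n),
\]
and each term $F_{(1^{i_1}\ldots k^{i_k})}(x_1\ldots x_n)=y_1\cdots y_k$ is a word of length $k$ whose letters $y_j\in V$ are bullet products of consecutive $x_i$'s. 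Now apply $\phi_A$ to such a length-$k$ word via (\ref{E29}) and compose with $\pi$. Since $\pi$ is the projection onto $V$, i.e.\ onto words of length $1$, only the inner term with $l=1$ (so $j_1=k$) survives, contributing $a_k\,F_{(1^k)}(y_1\cdots y_k)=a_k(y_1\bullet\cdots\bullet y_k)$. Associativity of $\bullet$ on $V$ then collapses this to $a_k(x_1\bullet\cdots\bullet x_n)$, independently of the partition $(i_1,\dots,i_k)$.

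Summing over all $k$ and all compositions of $n$, we obtain
\[
\pi\circ\phi_A\circ\phi_B(x_1\ldots x_n)=\left(\sum_{k=1}^{n}a_k\!\!\sum_{i_1+\cdots+i_k=n}\!\! b_{i_1}\cdots b_{i_k}\right)x_1\bullet\cdots\bullet x_n.
\]
The scalar in parentheses is precisely the coefficient of $X^n$ in $A(B(X))=A\circ B$, so the right-hand side equals $f_{A\circ B}(x_1\ldots x_n)$. By uniqueness in Proposition \ref{12}, $\phi_A\circ\phi_B=\phi_{A\circ B}$. The only subtle point is the identification of $\pi\circ\phi_A$ restricted to length-$k$ words with the operation $w\mapsto a_k(w_1\bullet\cdots\bullet w_k)$, but this is immediate from (\ref{E29}) together with the definition of $\pi$ as projection on $V=T_1(V)$, so no real obstacle arises.
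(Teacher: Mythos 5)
Your proof is correct and follows essentially the same route as the paper: both parts rely on the uniqueness statement of Proposition \ref{12}, and the computation of $\pi\circ\phi_A\circ\phi_B$ on a word reduces, exactly as in the paper, to applying $f_A$ to the expansion (\ref{E29}) of $\phi_B(x_1\ldots x_n)$ and collapsing the bullet products by associativity. The only cosmetic difference is that you re-derive $\pi\circ\phi_A=f_A$ on length-$k$ words from (\ref{E29}) instead of invoking it directly as the defining property of $\phi_A$; this changes nothing of substance.
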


\begin{proof} For all $x_1,\ldots, x_n \in V$, $\pi \circ Id(x_1\ldots x_n)=\delta_{1,n} x_1\ldots x_n=f_X(x_1\ldots x_n)$.
By unicity in proposition \ref{12}, $\phi_X=Id$. Moreover:
\begin{align*}
&\pi \circ \phi_A\circ \phi_B(x_1\ldots x_n)\\
&=f_A\left(\sum_{k=1}^n \sum_{i_1+\ldots+i_k=n}\hspace{-4mm} b_{i_1}\ldots b_{i_k} (x_1\bullet \ldots \bullet x_{i_1})\ldots 
(x_{i_1+\ldots+i_{k-1}+1}\bullet \ldots \bullet x_{1+\ldots+i_k})\right)\\
&=\sum_{k=1}^n \sum_{i_1+\ldots+i_k=n}a_k b_{i_1}\ldots b_{i_k}x_1\bullet \ldots \bullet x_n\\
&=f_{A\circ B}(x_1\ldots x_n).
\end{align*}
By unicity in proposition \ref{12}, $\phi_A\circ \phi_B=\phi_{A\circ B}$. \end{proof} 

So the set of all $\phi_A$, where $A$ is a formal series such that $A(0)=0$ and $A'(0)\neq 1$, is a subgroup of the group of coalgebra isomorphisms of $T(V)$,
isomorphic to the group of formal diffeomorphisms of the line.

\begin{cor}
The inverse of the isomorphism $\phi$ defined in corollary \ref{11} is $\phi_{ln(1+X)}$:
$$\phi^{-1}(x_1\ldots x_n)=\sum_{k=1}^n \sum_{i_1+\ldots+i_k=n}\frac{(-1)^{n+k}}{i_1\ldots i_k}
F_{(1^{i_1}\ldots k^{i_k})}(x_1\ldots x_n).$$
\end{cor}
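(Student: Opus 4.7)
The plan is to exploit the composition law $\phi_A \circ \phi_B = \phi_{A \circ B}$ together with the identity $\phi_X = \mathrm{Id}$ established in the preceding proposition, which together say that $A \mapsto \phi_A$ is a group anti-homomorphism (or homomorphism, depending on conventions) from formal diffeomorphisms of the line to coalgebra automorphisms of $T(V)$. Since $\phi$ from Corollary~\ref{11} was identified in the remark as $\phi_{\exp(X)-1}$, and since $\ln(1+X)$ and $\exp(X)-1$ are compositional inverses as formal series, we immediately obtain
\[
\phi_{\exp(X)-1} \circ \phi_{\ln(1+X)} = \phi_{\exp(\ln(1+X))-1} = \phi_X = \mathrm{Id},
\]
and symmetrically in the other order. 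Hence $\phi^{-1} = \phi_{\ln(1+X)}$.

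It then remains only to unpack the explicit formula. I would apply formula (\ref{E29}) with $A(X) = \ln(1+X) = \sum_{n\geq 1} \frac{(-1)^{n+1}}{n} X^n$, so $a_n = \frac{(-1)^{n+1}}{n}$. Substituting,
\[
\phi_{\ln(1+X)}(x_1\ldots x_n) = \sum_{k=1}^{n} \sum_{i_1+\ldots+i_k=n} \frac{(-1)^{i_1+1}}{i_1} \cdots \frac{(-1)^{i_k+1}}{i_k}\, F_{(1^{i_1}\ldots k^{i_k})}(x_1\ldots x_n),
\]
and the sign collapses to $(-1)^{(i_1+\ldots+i_k)+k} = (-1)^{n+k}$, giving exactly the stated expression.

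There is essentially no obstacle here: all the real work was done in establishing the group homomorphism property and in proving (\ref{E29}). The only thing worth being careful about is verifying that $\ln(1+X)$ falls in the class of formal series for which the previous proposition applies (namely, formal series without constant term, with $a_1 = 1$ so that the resulting coalgebra endomorphism is invertible), which is immediate since $\ln(1+X) = X - \frac{X^2}{2} + \cdots$ has no constant term and leading coefficient $1$.
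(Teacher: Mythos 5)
Your argument is exactly the paper's intended one: the corollary follows immediately from the composition law $\phi_A\circ\phi_B=\phi_{A\circ B}$, the identity $\phi_X=\mathrm{Id}$, the identification $\phi=\phi_{\exp(X)-1}$, and the fact that $\ln(1+X)$ and $\exp(X)-1$ are compositional inverses, after which substituting $a_n=(-1)^{n+1}/n$ into formula (\ref{E29}) yields the stated sign $(-1)^{n+k}$. The proof is correct and complete.
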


\begin{prop}\label{classif} Let $A\in K[[X]]^+$. 
\begin{enumerate}
\item $\phi_A:(T(V),\shuffle,\Delta)\longrightarrow (T(V),\shuffle,\Delta)$ is a Hopf algebra morphism for any commutative algebra $V$ if,  
and only if, $A=aX$ for a certain $a\in K$.
\item $\phi_A:(T(V),\shuffle,\Delta)\longrightarrow (T(V),\qshuffle,\Delta)$ is a Hopf algebra morphism for any commutative algebra $V$ if,  
and only if, $A=exp(aX)-1$ for a certain $a\in K$.
\item $\phi_A:(T(V),\qshuffle,\Delta)\longrightarrow (T(V),\qshuffle,\Delta)$ is a Hopf algebra morphism for any commutative algebra $V$ if,  
and only if, $A=(1+X)^a-1$ for a certain $a\in K$.
\item $\phi_A:(T(V),\qshuffle,\Delta)\longrightarrow (T(V),\shuffle,\Delta)$ is a Hopf algebra morphism for any commutative algebra $V$ if,  
and only if, $A=a\ln(1+X)$ for a certain $a\in K$.
\end{enumerate}\end{prop}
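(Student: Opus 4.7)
The plan rests on two facts already established. First, $\phi_A$ is automatically a coalgebra morphism of $(T(V),\Delta)$, independently of which algebra structure $\shuffle$ or $\qshuffle$ one puts on source or target, so in each case only the algebra-morphism condition for the relevant product remains to be checked. Second, the composition law $\phi_A\circ\phi_B=\phi_{A\circ B}$, combined with the Hopf isomorphism $\phi=\phi_e$ of Corollary \ref{11} (with $e=\exp(X)-1$) and its inverse $\phi_f$ (with $f=\ln(1+X)$) between the shuffle and quasi-shuffle bialgebra structures on $T(V)$, will let us reduce cases (2), (3), (4) to case (1).

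I would first prove case (1) directly. For sufficiency, when $A=aX$ formula \eqref{E29} collapses to the single term with $k=n$ and all $i_j=1$, so $\phi_{aX}$ acts by scalar multiplication by $a^n$ on $V^{\otimes n}$. Since the shuffle product sends $V^{\otimes m}\otimes V^{\otimes n}$ into $V^{\otimes(m+n)}$, both sides of $\phi_{aX}(x\shuffle y)=\phi_{aX}(x)\shuffle\phi_{aX}(y)$ equal $a^{m+n}(x\shuffle y)$ for $x\in V^{\otimes m}$, $y\in V^{\otimes n}$, making $\phi_{aX}$ a shuffle Hopf endomorphism for every $V$. For necessity, specialize to $V=k[X]^+$ and test the identity $\phi_A(x^{\shuffle n})=\phi_A(x)^{\shuffle n}$ with $x=X$. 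Using $X^{\shuffle n}=n!\,X^{\otimes n}$, the right-hand side equals $a_1^n\,n!\,X^{\otimes n}$, while \eqref{E29} expands the left-hand side as $n!\sum a_{i_1}\cdots a_{i_k}\,X^{i_1}\otimes\cdots\otimes X^{i_k}$, summed over compositions $(i_1,\ldots,i_k)$ of $n$. The tensors indexed by distinct compositions are linearly independent in $T(V)$; the $k=1$ term yields the single-factor tensor $X^n\in V\subset T(V)$, and comparing coefficients forces $a_n=0$ for every $n\ge 2$, whence $A=a_1 X$.

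For the remaining cases, since $\phi_e,\phi_f$ are mutually inverse Hopf isomorphisms between the shuffle and quasi-shuffle bialgebra structures (functorially in $V$), the Hopf-morphism property is preserved by pre- or post-composition with them. Hence $\phi_A\colon (T(V),\shuffle,\Delta)\to(T(V),\qshuffle,\Delta)$ is Hopf for all $V$ iff $\phi_f\circ\phi_A=\phi_{f\circ A}$ is a shuffle Hopf endomorphism, which by case (1) happens iff $f\circ A=bX$, i.e., $A=e\circ bX=\exp(bX)-1$. Similarly, $\phi_A\colon (T(V),\qshuffle,\Delta)\to(T(V),\qshuffle,\Delta)$ is Hopf iff $f\circ A\circ e=bX$, giving $A=e\circ bX\circ f=(1+X)^b-1$; and $\phi_A\colon(T(V),\qshuffle,\Delta)\to(T(V),\shuffle,\Delta)$ is Hopf iff $A\circ e=bX$, giving $A=bX\circ f=b\ln(1+X)$.

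The only real obstacle is the necessity direction of case (1); once it is in hand, the other three cases are bookkeeping via the composition law. A mild subtlety: when $b\in k$ is not a nonnegative integer, $(1+X)^b-1$ must be understood as the binomial power series $\sum_{n\ge 1}\binom{b}{n}X^n$, consistent with the formal identity $\exp(b\ln(1+X))-1=(1+X)^b-1$ in the ring of formal series without constant term.
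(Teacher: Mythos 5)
Your proof is correct, and for cases (3) and (4) it coincides with the paper's own strategy (conjugate or compose with the Hopf isomorphisms $\phi_{\exp(X)-1}$ and $\phi_{\ln(1+X)}$ and invoke $\phi_A\circ\phi_B=\phi_{A\circ B}$ to reduce to the shuffle-to-shuffle case). The two genuine differences are in how the base case is pinned down and in the treatment of case (2). For the necessity in case (1), the paper post-composes the identity $\phi_A(x\shuffle x_1\ldots x_k)=\phi_A(x)\shuffle\phi_A(x_1\ldots x_k)$ with the projection $\pi=F_{(1)}$ onto $V$, using $\pi\circ\phi_A(x_1\ldots x_k)=a_kF_{(1\ldots1)}(x_1\ldots x_k)$ to read off $(k+1)a_{k+1}=0$ in one line; you instead expand $\phi_A(X^{\shuffle n})$ fully via \eqref{E29} on the test algebra $V=k[X]^+$ and compare coefficients of the length-one tensor $X^n$. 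Both are valid; the paper's $\pi$-trick is lighter on bookkeeping and has the advantage that the same computation, applied to the quasi-shuffle product on the target, immediately yields the recursion $(k+1)a_{k+1}=a_1a_k$ and hence a direct proof of the necessity in case (2) --- which you instead obtain by a further reduction through $\phi_{\ln(1+X)}$. Your uniform reduction of (2), (3), (4) to (1) is arguably cleaner conceptually, at the cost of relying on Corollary \ref{11} even for case (2), where the paper only needs it for sufficiency. One small point worth making explicit in your write-up: the equivalence ``$\phi_A$ is Hopf for all $V$ iff $\phi_{f\circ A}$ (resp.\ $\phi_{f\circ A\circ e}$, $\phi_{A\circ e}$) is Hopf for all $V$'' uses that $\phi_e$ and $\phi_f$ are Hopf isomorphisms functorially in $V$, so that both composition and de-composition preserve the ``for all $V$'' quantifier; you state this and it is indeed guaranteed by Corollary \ref{11}.
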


\begin{proof} First, note that for any $x_1,\ldots,x_k \in V$:
$$\pi \circ \phi_A(x_1\ldots x_k)=a_kF_{(1\ldots 1)}(x_1\ldots x_k).$$
Consequently, for any commutative algebra $V$, for any $x,x_1,\ldots,x_k \in V$, $k\geq 1$:
\begin{align*}
\pi \circ \phi_A(x \shuffle x_1\ldots x_k)&=\pi(xx_1\ldots x_{k+1}+\ldots+x_1\ldots x_{k+1}x)\\
&=(k+1)a_{k+1}x.x_1\cdot\ \ldots\ \cdot x_k,\\
\pi(\phi_A(x)\shuffle \phi_A(x_1\ldots x_k))&=0,\\
\pi(\phi_A(x)\qshuffle \phi_A(x_1\ldots x_k))&=a_1a_kx.x_1\cdot \ldots \ \cdot x_k.
\end{align*}

1. We assume that $\phi_A$ is an algebra morphism for any $V$ for the shuffle product.
Let us choose an algebra $V$ and elements $x,x_1,\ldots,x_k \in V$ such that $x.x_1\cdot \ \ldots \ \cdot x_k \neq 0$ in $V$. 
As $\phi(x\shuffle x_1\ldots x_k)=\phi(x)\shuffle \phi(x_1\ldots x_k)$, applying $\pi$, we deduce that for all $k\geq 1$,
$(k+1)a_{k+1}=0$, so $a_{k+1}=0$. Hence, $A=a_1X$. Conversely, for any $x_1,\ldots,x_k \in V$,
$\phi_{aX}(x_1\ldots x_k)=a_1^k x_1\ldots x_k$, so $\phi_{aX}$ is an endomorphism of the Hopf algebra $(T(V),\shuffle,\Delta)$.\\

2. We already proved that $\phi_{exp(X)-1}$ is a Hopf algebra morphism from $(T(V),\shuffle,\Delta)$ to $(T(V),\qshuffle,\Delta)$. 
By composition:
$$\phi_{exp(aX)-1}=\phi_{exp(X)-1}\circ \phi_{aX}:(T(V),\shuffle,\Delta)\longrightarrow (T(V),\shuffle,\Delta)\longrightarrow (T(V),\qshuffle,\Delta)$$
is a Hopf algebra morphism.

We assume that $\phi_A$ is an algebra morphism for any $V$ from the shuffle product to the quasi-shuffle product. Let us choose an algebra $V$,
and $x,x_1,\ldots,x_k \in V$, such that $x.x_1\cdot\ \ldots \ \cdot x_k \neq 0$ in $V$. 
As $\phi(x\shuffle x_1\ldots x_k)=\phi(x)\qshuffle \phi(x_1\ldots x_k)$, applying $\pi$, we deduce that for all $k\geq 1$,
$(k+1)a_{k+1}=a_1a_k$, so $a_k=\frac{a_1^k}{k!}$ for all $k\geq 1$. Hence, $A=exp(a_1X)-1$.\\

3. The following conditions are equivalent:
\begin{itemize}
\item For any $V$, $\phi_A:(T(V),\qshuffle,\Delta)\longrightarrow (T(V),\qshuffle,\Delta)$ is a Hopf algebra morphism.
\item For any $V$, $\phi_{\ln(1+X)}\circ\phi_A\circ \phi_{exp(X)-1}:(T(V),\shuffle,\Delta)\longrightarrow (T(V),\shuffle,\Delta)$ is a Hopf algebra morphism.
For any $V$, $\phi_{\ln(1+X)\circ A\circ (exp(X)-1)}:(T(V),\shuffle,\Delta)\longrightarrow (T(V),\shuffle,\Delta)$ is a Hopf algebra morphism.
\item There exists $a\in K$, $\ln(1+X)\circ A\circ (exp(X)-1)=aX$.
\item There exists $a\in K$, $A=(1+X)^a-1$.
 \end{itemize}

4. Similar proof. \end{proof}

{\bf Remark.} The Proposition \ref{classif} classifies actually all the Hopf algebra endomorphisms and morphisms relating shuffle and quasi-shuffle algebras $T(V)$, that are natural (i.e. functorial) in $V$. This naturality property follows formally from the study of nonlinear Schur-Weyl duality in \cite{novelli2,foissy2}.

\section{Coderivations and graduations}\label{sect:9}
The present section complements the previous one that studied coalgebra endomorphisms. We aim at investigating here coderivations of quasi-shuffle bialgebras. As an application we recover the existence of a natural graded structure on the Hopf algebras $(T(V),\qshuffle,\Delta)$ \cite{foissy2}.

{\bf Notations.} Let $A$ be a NQSh algebra, $f \in End_K(A)$ and $v \in A$. We define:
$$f\prec v:\left\{\begin{array}{rcl}
A&\longrightarrow&A\\
x&\longrightarrow&f(x)\prec v,
\end{array}\right. \hspace{1cm}
v\prec f:\left\{\begin{array}{rcl}
A&\longrightarrow&A\\
x&\longrightarrow&v\prec f(x).
\end{array}\right.$$

\begin{prop}\label{17}
Let $f:T(V)\longrightarrow V$ be a linear map. There exists a unique coderivation $D$ of $T(V)$ such that $\pi \circ D=f$.
Moreover, $D$ is the unique linear endomorphism of $T(V)$ such that $D=f+\pi \prec D+f \prec Id$.
\end{prop}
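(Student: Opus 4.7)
The proof will parallel that of Proposition \ref{12} step for step, replacing ``coalgebra endomorphism'' with ``coderivation.'' Throughout, I tacitly work under the assumption $f(1)=0$: otherwise no coderivation $D$ with $\pi\circ D=f$ can exist, since the coderivation identity applied to $x_1\in V$ gives $\tdelta D(x_1)=x_1\otimes D(1)+D(1)\otimes x_1$, which forces $D(1)=0$ (and hence $f(1)=0$) once one requires $\pi D(x_1)\in V$. The plan has four pieces: (i) uniqueness of any coderivation $D$ with $\pi\circ D=f$; (ii) existence and uniqueness of a linear endomorphism $D$ satisfying the fixed-point equation $D=f+\pi\prec D+f\prec Id$; (iii) this $D$ is a coderivation; (iv) $\pi\circ D=f$.

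For (i), I induct on the length of tensors. At $n=1$ the coderivation identity together with $D(1)=0$ gives $\tdelta D(x_1)=0$, so $D(x_1)\in V$ and $D_1(x_1)-D_2(x_1)=\pi(D_1-D_2)(x_1)=0$. For $n\geq 2$, the coderivation identity applied to the deconcatenation coproduct $\tdelta(x_1\ldots x_n)=\sum_{i=1}^{n-1}x_1\ldots x_i\otimes x_{i+1}\ldots x_n$ expresses $\tdelta D(x_1\ldots x_n)$ as a sum of tensors evaluated on strictly shorter subwords. By the inductive hypothesis $\tdelta(D_1-D_2)(x_1\ldots x_n)=0$, hence the difference is primitive; applying $\pi$ yields zero. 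For (ii), note $\pi=F_{(1)}$ by Proposition \ref{8}, so $\pi$ annihilates every tensor of length $\neq 1$. Consequently $(\pi\prec D)(x_1\ldots x_n)$ collapses to $x_1\prec D(x_2\ldots x_n)$ (and is zero for $n\leq 1$), and the fixed-point equation reads
\[
D(x_1\ldots x_n)=f(x_1\ldots x_n)+x_1\prec D(x_2\ldots x_n)+\sum_{i=1}^{n-1}f(x_1\ldots x_i)\prec(x_{i+1}\ldots x_n),
\]
with base values $D(1)=0$ and $D(x_1)=f(x_1)$. This inductively defines and uniquely characterizes $D$. Step (iv) follows immediately: since every $\prec$ product lands in $\bigoplus_{n\geq 2}T_n(V)$ and $\pi$ kills these, applying $\pi$ to the fixed-point equation yields $\pi D=\pi f=f$.

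The main obstacle is (iii), showing that the inductively constructed $D$ is a coderivation. I verify $\Delta D(x_1\ldots x_n)=(D\otimes Id+Id\otimes D)\Delta(x_1\ldots x_n)$ by induction on $n$. The essential computational tool is the NQSh bialgebra identity (\ref{nqsbialgebra1}), which for a primitive $u\in V$ collapses (using $\tdelta(u)=0$) to
\[
\tdelta(u\prec v)=u\prec v'\otimes v''+u\otimes v.
\]
I apply this identity once to $x_1\prec D(x_2\ldots x_n)$, invoking the inductive hypothesis to replace $\tdelta D(x_2\ldots x_n)$ by $(D\otimes Id+Id\otimes D)\tdelta(x_2\ldots x_n)$, and once to each term $f(x_1\ldots x_i)\prec(x_{i+1}\ldots x_n)$ (noting $f(x_1\ldots x_i)\in V$ is primitive). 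Collecting and reindexing the resulting sums by the cut position of the deconcatenation, the expression reassembles exactly into $(D\otimes Id+Id\otimes D)\Delta(x_1\ldots x_n)$. The bookkeeping is delicate but entirely parallel to the Sweedler-style computation at the end of the proof of Proposition \ref{12}. Combining (i)--(iv) then yields both the existence and uniqueness of the coderivation, together with its characterization by the fixed-point equation.
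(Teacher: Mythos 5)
Your overall strategy (uniqueness via primitivity of the difference of two coderivations, existence via the explicit recursion solving the fixed-point equation, then checking the coderivation identity by induction on tensor length) is exactly the paper's. But your opening reduction to the case $f(1)=0$ is wrong, and it matters: Proposition \ref{17} --- unlike Proposition \ref{12}, which explicitly assumes $f(1)=0$ --- is stated for an arbitrary linear map $f:T(V)\longrightarrow V$. Your argument that $f(1)=0$ is forced confuses the (automatic) condition $\pi D(x_1)\in V$ with the (unrequired) condition $D(x_1)\in V$. The coderivation identity applied to the unit gives $\Delta(D(1))=D(1)\otimes 1+1\otimes D(1)$, i.e.\ $D(1)$ is primitive, hence $D(1)=\pi D(1)=f(1)$; nothing forces this to vanish. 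For $x_1\in V$ one then gets $\tdelta D(x_1)=x_1\otimes D(1)+D(1)\otimes x_1$, which is perfectly consistent with $D(x_1)=x_1f(1)+f(1)x_1+f(x_1)$ --- and this is what the correct recursion produces. That recursion is
$$D(x_1\ldots x_n)=x_1\prec D(x_2\ldots x_n)+\sum_{i=0}^{n-1}f(x_1\ldots x_i)\prec x_{i+1}\ldots x_n+f(x_1\ldots x_n),$$
whose $i=0$ term $f(1)\prec x_1\ldots x_n=f(1)x_1\ldots x_n$ you have dropped, together with the base value $D(1)=f(1)$ rather than $0$. Your proof is therefore a proof of the special case $f(1)=0$ only (which happens to cover the subsequent applications, where $f=f_A$ with $A$ a series without constant term, but not the statement as written).

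Apart from this, the remaining steps are sound and follow the paper's route: your step (iii) is carried out in the paper by rewriting $u\prec w=uw$ for $u\in V$ and expanding the deconcatenation coproduct directly, which is equivalent to your use of the specialization of (\ref{nqsbialgebra1}) to a primitive left argument. If you repair the $f(1)$ issue, just make sure the extra $i=0$ term is carried through that computation --- the paper's displayed calculation contains the summand $f(1)\otimes x_1\ldots x_n$ precisely because of it.
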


\begin{proof} {\it First step.} Let us prove that the unicity of the coderivation $D$ such that $\pi \circ D=f$. Let $D_1$ and $D_2$ be two coderivations
such that $\pi \circ D_1=\pi \circ D_2$. Let us prove that $D_1(x_1\ldots x_n)=D_2(x_1\ldots x_n)$ by induction on $n$. 
$$\Delta\circ D_1(1)=(D_1\otimes Id+Id \otimes D_1)(1\otimes 1)=D_1(1)\otimes 1+1\otimes D_1(1),$$
so $D_1(1)\in Prim(T(V))=V$. Similarly, $D_2(1)\in V$. Hence, $D_1(1)=\pi \circ D_1(1)=\pi \circ D_2(1)=D_2(1)$.
Let us assume the result at all ranks $<n$.  If $p=1$ or $2$:
$$\Delta \circ D_p(x_1\ldots x_n)=\sum_{i=0}^n D_p(x_1\ldots x_i)\otimes x_{i+1}\ldots x_n+
\sum_{i=0}^n x_1\ldots x_i \otimes D_p(x_{i+1}\ldots x_n).$$
Applying the induction hypothesis at all ranks $<k$, we obtain by substraction:
$$\Delta\circ (D_1-D_2)(x_1\ldots x_n)=(D_1-D_2)(x_1\ldots x_n)\otimes 1+1\otimes (D_1-D_2)(x_1\ldots x_n).$$
So $(D_1-D_2)(x_1\ldots x_n)\in V$. Applying $\pi$:
$$(D_1-D_2)(x_1\ldots x_n)=\pi \circ (D_1-D_2)(x_1\ldots x_n)=0.$$
So $D_1(x_1\ldots x_n)=D_2(x_1\ldots x_n)$. \\

{\it Second step.}  Let us prove the existence of a map $D$ such that $D=f+\pi \prec D+f \prec Id$. 
We define $D(x_1\ldots x_n)$ by induction on $n$ by $D(1)=f(1)$ and:
$$D(x_1\ldots x_n)=x_1\prec D(x_2\ldots x_n)+\sum_{i=0}^{n-1} f(x_1\ldots x_i) \prec x_{i+1}\ldots x_n+f(x_1\ldots x_n).$$
Then $(f+\pi \prec D+f \prec Id)(1)=f(1)=D(1)$. If $n \geq 1$:
\begin{align*}
&(f+\pi \prec D+f \prec Id)(x_1\ldots x_n)\\
&=f(x_1\ldots x_n)+\sum_{i=1}^{n} \pi(x_1\ldots x_i)\prec D(x_{i+1}\ldots x_n)\\
&+\sum_{i=0}^{n-1} f(x_1\ldots x_i)\prec x_{i+1}\ldots x_n\\
&=f(x_1\ldots x_n)+x_1\prec D(x_2\ldots x_n)+\sum_{i=0}^{n-1} f(x_1\ldots x_i) \prec x_{i+1}\ldots x_n\\
&=D(x_1\ldots x_n).
\end{align*}
So $D=f+\pi \prec D+f \prec Id$.\\

{\it Last step.} Let $D$ such that $D=f+\pi \prec D+f \prec Id$. Let us prove that $\Delta\circ D(x_1\ldots x_n)=(D\otimes Id+Id \otimes D)\circ \Delta(x_1\ldots x_n)$ by induction on $n$. If $n=0$:
\begin{align*}
\Delta\circ D(1)&=\Delta(f(1))\\
&=f(1)\otimes 1+1\otimes f(1)\\
&=D(1)\otimes 1+1\otimes D(1)\\
&=(D\otimes Id+Id\otimes D)(1\otimes 1).
\end{align*}
Let us assume the result at all ranks $<n$. 
\begin{align*}
D(x_1\ldots x_n)&=(f+\pi \prec D+f \prec Id)(x_1\ldots x_n)\\
&=\sum_{i=1}^{n} \pi(x_1\ldots x_i)\prec D(x_{i+1}\ldots x_n)+\sum_{i=0}^{n-1} f(x_1\ldots x_i)\prec x_{i+1}\ldots x_n\\
&+f(x_1\ldots x_n)\\
&=x_1 D(x_2\ldots x_n)+\sum_{i=0}^n f(x_1\ldots x_i) x_{i+1}\ldots x_n.
\end{align*}
Hence:
\begin{align*}
&\Delta\circ D(x_1\ldots x_n))\\
&=\sum_{j=1}^n x_1D(x_2\ldots x_j)\otimes x_{j+1}\ldots x_n\\
&+\sum_{j=1}^n x_1\ldots x_j \otimes D(x_{j+1}\ldots x_n)+1\otimes x_1D(x_2\ldots x_n)\\
&+\sum_{i=0}^n\sum_{j=i}^n f(x_1\ldots x_i)x_{i+1}\ldots x_j \otimes x_{j+1}\ldots x_n\\
&+\sum_{i=0}^n 1\otimes f(x_1\ldots x_i)x_{i+1}\ldots x_n\\
&=\sum_{j=1}^n x_1D(x_2\ldots x_j)\otimes x_{j+1}\ldots x_n\\
&+\sum_{j=1}^n x_1\ldots x_j \otimes D(x_{j+1}\ldots x_n)+1\otimes x_1D(x_2\ldots x_n)\\
&+\sum_{j=1}^n\sum_{i=1}^j f(x_1\ldots x_i)x_{i+1}\ldots x_j \otimes x_{j+1}\ldots x_n\\
&+f(1)\otimes x_1\ldots x_n+\sum_{i=0}^n 1\otimes f(x_1\ldots x_i)x_{i+1}\ldots x_n\\
&=\sum_{j=0}^n D(x_1\ldots x_j) \otimes x_{j+1}\ldots x_n+\sum_{j=1}^n x_1\ldots x_j \otimes D(x_{j+1}\ldots x_n)\\
&=(D\otimes Id+Id \otimes D)\circ \Delta(x_1\ldots x_n).
\end{align*}
Moreover, $\pi \circ D(1)=\pi \circ f(1)=f(1)$; if $n\geq 1$:
\begin{align*}
 \circ D(x_1\ldots x_n)&=\pi(x_1D(x_2\ldots x_n))+\sum_{i=0}^n \pi(f(x_1\ldots x_i) x_{i+1}\ldots x_n)\\
 &=0+f(x_1\ldots x_n).
\end{align*}
So $\pi \circ D=f$. \end{proof}

\begin{prop}
Let $\displaystyle A=\sum_{n\geq 1} a_n X^n$ be a formal series without constant term. Let $D_A$ be the unique coderivation of $T(V)$ such that
$\pi\circ \phi_A=f_A$. For all $x_1,\ldots,x_n \in V$:
\begin{equation}
\label{E30}D_A(x_1\ldots x_n)=\sum_{i=1}^n a_i \sum_{j=1}^{n-i+1}F_{(12\ldots j-1 j^i j+1\ldots n-i+1)}(x_1\ldots x_n).
\end{equation}\end{prop}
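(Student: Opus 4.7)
The strategy is to recognize the right-hand side of~(\ref{E30}), which I will call $\tilde D$, as a fixed point of the recursive equation $D=f_A+\pi\prec D+f_A\prec \mathrm{Id}$ characterizing $D_A$ in Proposition~\ref{17}. The uniqueness statement of that proposition then forces $\tilde D=D_A$. No further structural argument is needed -- only a careful bookkeeping of summands.

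First I unpack the notation. Using the definition of $F_\sigma$ in terms of an ordered partition, the $(i,j)$ summand of~(\ref{E30}) equals
$$a_i\,x_1\ldots x_{j-1}\,(x_j\bullet\cdots\bullet x_{j+i-1})\,x_{j+i}\ldots x_n,$$
i.e.\ the word obtained by contracting the $i$ consecutive letters $x_j,\ldots,x_{j+i-1}$ into a single $\bullet$-product and multiplying by $a_i$. In particular $f_A(x_1\ldots x_n)=a_n F_{(1^n)}(x_1\ldots x_n)$ is exactly the summand $(i,j)=(n,1)$, and each $(i,1)$ summand equals $a_i(x_1\bullet\cdots\bullet x_i)x_{i+1}\ldots x_n$.

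Next I verify the fixed-point equation by induction on $n$. The cases $n=0,1$ are immediate since $\tdelta$ vanishes on tensors of length $\le 1$, so the right-hand side reduces to $f_A$, matching $\tilde D$ (both sides are $0$ for $n=0$ and $a_1 x_1$ for $n=1$). For the induction step I evaluate each of the three pieces of $f_A+\pi\prec \tilde D+f_A\prec\mathrm{Id}$ on $x_1\ldots x_n$, using $\tdelta(x_1\ldots x_n)=\sum_{k=1}^{n-1}x_1\ldots x_k\otimes x_{k+1}\ldots x_n$ and the fact that $\pi=F_{(1)}$ vanishes on tensors of length $\ge 2$ (Proposition~\ref{8}). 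The $f_A$-term yields the summand $(n,1)$. The $(f_A\prec\mathrm{Id})$-term gives $\sum_{i=1}^{n-1}a_i(x_1\bullet\cdots\bullet x_i)\prec x_{i+1}\ldots x_n$, which by the Sch\"utzenberger identity~(\ref{schutztrick}) (or just $v\prec w = vw$ when $v\in V$) equals $\sum_{i=1}^{n-1}a_i(x_1\bullet\cdots\bullet x_i)x_{i+1}\ldots x_n$, i.e.\ the $j=1$ summands with $i<n$. The $(\pi\prec \tilde D)$-term collapses to the single contribution $x_1\prec \tilde D(x_2\ldots x_n) = x_1\,\tilde D(x_2\ldots x_n)$, and by the induction hypothesis this equals $\sum_{i=1}^{n-1}a_i\sum_{j'=1}^{n-i} x_1 x_2\ldots x_{j'}(x_{j'+1}\bullet\cdots\bullet x_{j'+i})x_{j'+i+1}\ldots x_n$; after reindexing $j=j'+1$ this produces exactly the summands of $\tilde D(x_1\ldots x_n)$ with $j\ge 2$.

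Summing the three contributions recovers $\tilde D(x_1\ldots x_n)$, which establishes $\tilde D=f_A+\pi\prec \tilde D+f_A\prec\mathrm{Id}$ and hence, by Proposition~\ref{17}, $\tilde D=D_A$. The only real obstacle is the bookkeeping on index ranges: one must check that the set $\{(i,j):1\le i\le n,\,1\le j\le n-i+1\}$ splits exactly as $\{(n,1)\}\sqcup\{(i,1):1\le i<n\}\sqcup\{(i,j):1\le i\le n-1,\,2\le j\le n-i+1\}$, with the shift $j=j'+1$ matching the third piece. Once this is verified the induction closes without further work.
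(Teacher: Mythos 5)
Your proposal is correct and follows essentially the same route as the paper: both define the right-hand side of (\ref{E30}) as a candidate map, verify that it satisfies the fixed-point equation $D=f_A+\pi\prec D+f_A\prec \mathrm{Id}$ by splitting the summands into the $j=1$ terms (coming from $f_A$ and $f_A\prec\mathrm{Id}$) and the $j\geq 2$ terms (coming from $x_1\prec D(x_2\ldots x_n)$, using that $\pi=F_{(1)}$ kills longer tensors), and then invoke the uniqueness clause of Proposition \ref{17}. The index bookkeeping you describe matches the paper's computation exactly.
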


\begin{proof}  Let $D$ be the linear endomorphism defined by the right side of (\ref{E30}).  As $f_A(1)=0$, we get by induction on $n$:
\begin{align*}
&(f+\pi \prec D+f \prec Id)(x_1\ldots x_n)\\
&=f(x_1\ldots x_n)+x_1D(x_2\ldots x_n)+\sum_{i=1}^{n-1} f(x_1\ldots x_i)x_{i+1}\ldots x_n\\
&=x_1D(x_2\ldots x_n)+\sum_{i=1}^n f(x_1\ldots x_i)x_{i+1}\ldots x_n\\
&=\sum_{i=1}^{n-1}a_i \sum_{j=2}^{n-i+1}F_{(12\ldots j-1 j^i j+1\ldots n-i+1)}(x_1\ldots x_n)
+\sum_{i=1}^n a_i F_{(1^i 2\ldots n-i+1)}(x_1\ldots x_n)\\
&=\sum_{i=1}^n a_i \sum_{j=1}^{n-i+1}F_{(12\ldots j-1 j^i j+1\ldots n-i+1)}(x_1\ldots x_n)\\
&=D(x_1\ldots x_n).
\end{align*}
Moreover, $\pi \circ D(x_1\ldots x_n)=a_n x_1\bullet \ldots \bullet x_n=f_A(x_1\ldots x_n)$. The unicity in proposition \ref{17} implies that
$D=D_A$. \end{proof}

\begin{cor}
For all word $x_1\ldots x_n$, $D_X(x_1\ldots x_n)=n x_1\ldots x_n$.
\end{cor}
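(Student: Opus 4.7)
The plan is to simply specialize formula (\ref{E30}) from the preceding proposition to the case $A = X$, for which $a_1 = 1$ and $a_i = 0$ for all $i \geq 2$. After this specialization the double sum collapses dramatically: only the term $i=1$ survives, and for each $j \in \{1,\dots,n\}$ the packed word $(12\ldots j-1\, j^1\, j+1\ldots n-1+1) = (12\ldots n)$ is simply the identity packed word of length $n$. Hence
$$D_X(x_1\ldots x_n) = \sum_{j=1}^n F_{(12\ldots n)}(x_1\ldots x_n).$$
Since $F_{(1\ldots n)}$ acts as the identity on $V^{\otimes n}$ (as noted in the first remark following Proposition \ref{7}), each summand equals $x_1\ldots x_n$, and summing over the $n$ values of $j$ yields $n\, x_1\ldots x_n$.

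There is essentially nothing nontrivial here; the only thing one must verify is that the exponent $j^1$ really collapses to a single letter $j$ so that the underlying packed word is increasing with no repetition, hence equal to $(12\ldots n)$.

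As a sanity check, one could also argue from uniqueness in Proposition \ref{17}. Let $D\colon T(V)\to T(V)$ be the grading operator sending $x_1\ldots x_n$ to $n\, x_1\ldots x_n$ (with $D(1)=0$). One verifies immediately that $\pi\circ D = f_X$, since $f_X$ is zero outside $V$ and equals the identity on $V$. The coderivation property reduces, on the deconcatenation coproduct, to the telescoping identity
$$\sum_{i=0}^{n}\bigl(i + (n-i)\bigr)\, x_1\ldots x_i \otimes x_{i+1}\ldots x_n = n\,\Delta(x_1\ldots x_n),$$
which is trivial. By the uniqueness part of Proposition \ref{17}, $D = D_X$, giving the claim.
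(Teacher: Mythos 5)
Your main argument is exactly the paper's proof: specialize formula (\ref{E30}) to $A=X$, observe that only the $i=1$ terms survive and that each packed word $(12\ldots j-1\,j^1\,j+1\ldots n)$ is just $(1\ldots n)$, then sum over $j$. The uniqueness-based sanity check is a pleasant extra but not needed; the proposal is correct and matches the paper.
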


\begin{proof} Indeed, $\displaystyle D_X(x_1\ldots x_n)=\sum_{j=1}^nF_{(12\ldots j-1 j^1 j+1\ldots n)}(x_1\ldots x_n)=n x_1\ldots x_n$. \end{proof} 

{\bf Remark.} Let $A$ and $B$ be two formal series and $\lambda \in K$. As $D_A+\lambda D_B$ is a coderivation and 
$\pi \circ (D_A+\lambda D_B)=f_A+\lambda f_B=f_{A+\lambda B}$:
$$D_A+\lambda D_B=D_{A+\lambda B}.$$
Moreover, the group of coalgebra automorphims of $T(V)$ acts on the space of coderivations of $T(V)$ by conjugacy. Let us precise this action 
if we work only with automorphisms and coderivations associated to formal series.

\begin{prop} 
Let $A,B$ be two formal series without constant terms, such that $A'(0)\neq 0$. Then:
$$\phi_A^{-1}\circ D_B\circ \phi_A=D_{\frac{ B \circ A}{A'}}.$$
\end{prop}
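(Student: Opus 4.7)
The plan is to recognize $A\mapsto \phi_A$ as a group homomorphism and differentiate the desired conjugation identity through an infinitesimal deformation. I will work over the ring of dual numbers $R:=k[\varepsilon]/(\varepsilon^2)$ (silently extending scalars).

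\emph{Step 1 (infinitesimal identification).} My key claim will be that for every formal series $C$ without constant term,
$$\phi_{X+\varepsilon C}=Id+\varepsilon\, D_C$$
as $R$-linear coalgebra endomorphisms of $T(V)\otimes_k R$. Both sides are unital, and both are coalgebra morphisms: for the right-hand side, the coderivation property $\Delta\circ D_C=(D_C\otimes Id+Id\otimes D_C)\circ \Delta$ yields $(Id+\varepsilon D_C)^{\otimes 2}\circ\Delta=\Delta\circ(Id+\varepsilon D_C)$ since $\varepsilon^2=0$. By the uniqueness half of Proposition \ref{12}, the two endomorphisms will coincide as soon as their projections under $\pi$ to $V\otimes_k R$ agree. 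This last check is immediate from $\pi\circ \phi_A(x_1\ldots x_n)=a_n\,x_1\bullet\cdots\bullet x_n$: applied to $A=X+\varepsilon C$ one gets $\delta_{n,1}\,x_1+\varepsilon c_n\,x_1\bullet\cdots\bullet x_n$, which is exactly $(\pi+\varepsilon f_C)(x_1\ldots x_n)$.

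\emph{Step 2 (group law and chain rule).} Next I will exploit the group law $\phi_{A_1}\circ \phi_{A_2}=\phi_{A_1\circ A_2}$ and $\phi_X=Id$ established just before Corollary \ref{11}: $\phi_A$ is invertible with inverse $\phi_{A^{\langle -1\rangle}}$ whenever $A'(0)\neq 0$, where $A^{\langle -1\rangle}$ denotes the compositional inverse of $A$. Taking $A_2=X+\varepsilon B$ and conjugating over $R$:
$$\phi_A^{-1}\circ \phi_{X+\varepsilon B}\circ \phi_A=\phi_{A^{\langle -1\rangle}\circ(X+\varepsilon B)\circ A}=\phi_{A^{\langle -1\rangle}\circ(A+\varepsilon B\circ A)}.$$
A Taylor expansion in $R[[X]]$, using $(A^{\langle -1\rangle})'(A)=1/A'$ (obtained by differentiating $A^{\langle -1\rangle}\circ A=X$), will give
$$A^{\langle -1\rangle}\circ(A+\varepsilon\, B\circ A)=X+\varepsilon\cdot\frac{B\circ A}{A'}.$$

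\emph{Step 3 (conclusion).} Applying Step 1 twice, once with $C=B$ and once with $C=(B\circ A)/A'$, combined with Step 2 gives
$$Id+\varepsilon\bigl(\phi_A^{-1}\circ D_B\circ \phi_A\bigr)=\phi_A^{-1}\circ\phi_{X+\varepsilon B}\circ \phi_A=\phi_{X+\varepsilon(B\circ A)/A'}=Id+\varepsilon\, D_{(B\circ A)/A'},$$
and reading off the coefficient of $\varepsilon$ will yield the desired identity. Note that $(B\circ A)/A'$ has no constant term (since $B\circ A$ vanishes at $0$), so $D_{(B\circ A)/A'}$ is well defined.

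I expect the main obstacle to be Step 1: transporting the uniqueness argument of Proposition \ref{12} to coefficients in $R=k[\varepsilon]/(\varepsilon^2)$ in order to justify the ``infinitesimal generator'' interpretation of $\phi$. The remainder is a formal chain-rule manipulation combined with the group law $\phi_{A_1}\circ\phi_{A_2}=\phi_{A_1\circ A_2}$. A direct alternative would be to compute $\pi\circ(\phi_A^{-1}\circ D_B\circ \phi_A)$ by manipulating the explicit sums involving the $F_\sigma$'s, but this looks considerably more cumbersome than the infinitesimal argument above.
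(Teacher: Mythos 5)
Your proof is correct, but it takes a genuinely different route from the paper's. The paper argues directly: by linearity it reduces to $B=X^p$, conjugates the explicit operator sum (\ref{E30}) by the explicit sum (\ref{E29}), computes $\pi\circ\phi_A^{-1}\circ D_{X^p}\circ\phi_A$ as the map $f_S$ attached to an explicit formal series $S$, identifies $S=A^p/A'$ by a generating-series manipulation, and concludes by the uniqueness half of Proposition \ref{17} (a conjugate of a coderivation by a coalgebra automorphism being again a coderivation). You instead interpret $D_C$ as the infinitesimal generator of the family $\phi_{X+\varepsilon C}$ over $R=k[\varepsilon]/(\varepsilon^2)$ and deduce the identity from the group law $\phi_{A_1}\circ\phi_{A_2}=\phi_{A_1\circ A_2}$ together with the chain rule. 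This is clean and conceptually explains where $\frac{B\circ A}{A'}$ comes from (it is the pullback of the ``vector field'' $B$ along $A$), and it avoids all manipulation of the $F_\sigma$'s. The price is the scalar extension: you must check that the existence/uniqueness statements of Proposition \ref{12}, the identity $\pi\circ\phi_A=f_A$, and the composition law all remain valid for series with coefficients in $R$. This is routine --- the inductions in those proofs use only coassociativity and the fact that the primitives of $T(V)\otimes_k R$ for the deconcatenation coproduct are $V\otimes_k R$, which holds because $R$ is free over $k$; and in your application both coalgebra maps of Step 1 visibly fix $1$, so the group-like base case is immediate. Alternatively, Step 1 can be read off directly by expanding formula (\ref{E29}) for $A=X+\varepsilon C$ (modulo $\varepsilon^2$ only products $a_{i_1}\cdots a_{i_k}$ with at most one index $\geq 2$ survive) and comparing with formula (\ref{E30}); that bypasses the uniqueness-over-$R$ discussion entirely.
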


\begin{proof} By linearity and continuity of the action, it is enough to prove this formula if $B=X^p$. We denote by $C$ the inverse of $A$ for the composition.
\begin{align*}
&\pi \circ \phi_A^{-1} \circ D_{X^p}\circ \phi_A(x_1\ldots x_n)\\
&=f_C\circ D_{X_p}\left(\sum_{k=1}^n \sum_{i_1+\ldots+i_k=n} a_{i_1}\ldots a_{i_k} 
F_{(1^{i_1}\ldots k^{i_k})}(x_1\ldots x_n)\right)\\
&=\sum_{k=p-1}^n \sum_{i_1+\ldots+i_k=n} (k-p-1) c_{k-p+1}a_{i_1}\ldots a_{i_k} 
x_1\bullet \ldots \bullet x_n.
\end{align*}
So $ \pi \circ \phi_{A^{-1}} \circ D_{X^p}\circ \phi_A$ is the linear map associated to the formal series:
\begin{align*}
\left(\sum_{k=p-1}^\infty (k-p+1)c_{k-p+1}X^k\right)\circ A&=\left(\sum_{i=0}^\infty ia_i X^{i-1+p}\right)\circ A\\
&=(X^p C')\circ A\\
&=A^p C'\circ A\\
&=\frac{A^p}{A'}.
\end{align*}
Hence, $\phi_{A^{-1}}\circ D_{X^p}\circ \phi_A=D_{\frac{A^p}{A'}}$. \end{proof}

\begin{cor}
The eigenspaces of the coderivation $D_{(1+X)ln(1+X)}$ give a gradation of the Hopf algebra $(T(V),\qshuffle,\Delta)$.
\end{cor}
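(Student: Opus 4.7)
The plan is to transport the natural length grading of $(T(V),\shuffle,\Delta)$ to the quasi-shuffle side via the exponential isomorphism $\phi_{\exp(X)-1}$ of Corollary \ref{11}. On the shuffle side the grading $T(V)=\bigoplus_n V^{\otimes n}$ is a Hopf algebra gradation, and by the previous corollary $V^{\otimes n}$ is precisely the eigenspace of $D_X$ with eigenvalue $n$. Because $\phi_{\exp(X)-1}\colon(T(V),\shuffle,\Delta)\to(T(V),\qshuffle,\Delta)$ is a Hopf algebra isomorphism (Corollary \ref{11} together with Proposition \ref{classif}), the components $\phi_{\exp(X)-1}(V^{\otimes n})$ form a Hopf algebra gradation of the quasi-shuffle bialgebra. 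So the corollary will follow once I identify these components as the eigenspaces of $D_{(1+X)\ln(1+X)}$.

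The key computation is to show that $D_{(1+X)\ln(1+X)}=\phi_{\exp(X)-1}\circ D_X\circ\phi_{\exp(X)-1}^{-1}$. I apply the conjugation formula of the preceding Proposition with $A=\ln(1+X)$ (the compositional inverse of $\exp(X)-1$, which has $A'(0)=1\neq 0$) and $B=X$. This yields
$$\phi_{\ln(1+X)}^{-1}\circ D_X\circ\phi_{\ln(1+X)}=D_{\frac{X\circ\ln(1+X)}{(\ln(1+X))'}}=D_{(1+X)\ln(1+X)}.$$
Since the family $\{\phi_A\}$ is a group under composition with $\phi_A\circ\phi_B=\phi_{A\circ B}$, we have $\phi_{\ln(1+X)}^{-1}=\phi_{\exp(X)-1}$, giving the desired conjugacy.

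Combining the two steps, $T(V)=\bigoplus_n\phi_{\exp(X)-1}(V^{\otimes n})$ is a decomposition by eigenspaces of $D_{(1+X)\ln(1+X)}$, the eigenvalue on the $n$-th summand being $n$. Transporting through the Hopf isomorphism shows that the $\qshuffle$-product of elements of degrees $n$ and $m$ is homogeneous of degree $n+m$, and that $\Delta$ sends the $n$-th summand into the sum of tensor products of lower-degree summands. The only subtlety is picking the correct direction of conjugation: one must invoke the Proposition with $A=\ln(1+X)$ rather than $A=\exp(X)-1$, else one lands on $(\exp(X)-1)/\exp(X)$ instead of $(1+X)\ln(1+X)$. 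Otherwise the argument is pure transport of structure and presents no genuine obstacle.
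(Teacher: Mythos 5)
Your argument is correct and matches the paper's own proof: both conjugate $D_X$ by the exponential isomorphism $\phi=\phi_{\exp(X)-1}$ and invoke the conjugation formula with $A=\ln(1+X)$, $B=X$ to identify the result as $D_{(1+X)\ln(1+X)}$, then transport the length gradation (equivalently, the eigenspace decomposition of $D_X$) through the Hopf isomorphism. Your version is slightly more explicit than the paper's in identifying the eigenspaces as $\phi(V^{\otimes n})$ and in checking compatibility with the coproduct, but the route is the same.
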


\begin{proof} Let $D=\phi \circ D_X \circ \phi^{-1}$. As $\phi=\phi_{exp(X)-1}$:
$$D=\phi_{ln(1+X)}^{-1}\circ D_X\circ \phi_{ln(1+X)}=D_{(1+X)ln(1+X)}.$$
As $D_X$ is a derivation of the algebra $(T(V), \shuffle)$ and $\phi$ is an algebra isomorphism from $(T(V),\shuffle)$ to $(T(V),\qshuffle)$,
$D$ is is a derivation of the algebra $(T(V),\qshuffle)$. As it is conjugated to $D_X$, its eigenvalues are the elements of $\mathbb{N}$. \end{proof} \ \\

{\bf Remark.} As $\displaystyle (1+X)ln(1+X)=1+\sum_{k=2}^\infty \frac{(-1)^k}{k(k-1)} X^k$:
\begin{align*}
&D_{(1+X)ln(1+X)}(x_1\ldots x_n)\\
&=n x_1\ldots x_n+\sum_{i=2}^n \sum_{j=1}^{n-i+1} \frac{(-1)^i}{i(i-1)} x_1\ldots x_{j-1}(x_j\bullet \ldots \bullet x_{j+i-1})x_{j+i}\ldots x_n.
\end{align*}
The gradation of $A=(T(V),\qshuffle)$ is given by:
$$A_n=Vect\left(\begin{array}{c}
\displaystyle \sum_{k=1}^n
\sum_{i_1+\ldots+i_k=n} \frac{1}{i_1!\ldots i_k!} \left(\prod_{i=1}^{i_1}x_i\right)\ldots \left(\prod_{i=i_1+\ldots+i_{k-1}+1}^{i_1+\ldots+i_k }x_i\right),\\
x_1,\ldots,x_n \in V\end{array}\right).$$

\section{Decorated operads and graded structures}\label{sect:10}

In many applications, algebras over operads carry a natural graduation. This is because geometrical objects (polynomial vector fields, spaces, differential forms\dots ), but also combinatorial and algebraic ones carry often a graduation (or a dimension, a cardinal\dots ) that is better taken into account in the associated algebra structures.
As far as quasi-shuffle algebras are concerned, they carry often naturally a graduation in their application domains : think to quasi-symmetric functions and multizeta values (MZVs) \cite{cartier2002fonctions}; Ecalle's mould calculus and dynamical systems \cite{fauvet2012ecalle}; iterated integrals of It\^o type in stochastic calculus \cite{ebrahimi2015flows,ebrahimi2015exponential}.

Here, we explain briefly how the formalism of operads can be adapted to take into account graduations. We detail then the case of quasi-shuffle algebras and conclude by studying the analogue, in this context, of the classical descent algebra of a graded commutative or cocommutative Hopf algebra \cite{patras1994algebre}.

In this section, we denote by $A=\bigoplus\limits_{n\in\mathbf N}A_n$ (where $A_0=k$, the ground field), a graded, connected, quasi-shuffle bialgebra. By graded we mean that all the structure maps ($\prec,\bullet,\Delta$) are graded maps. Then $Prim(A)=V=\bigoplus\limits_{n\in\mathbf N^\ast}V_n$ is an associative, commutative graded algebra for the product 
$\bullet$ and we can identify $A$ and the quasi-shuffle algebra $T^+(V)$ as graded tridendriform algebras. Be aware that the graduation of $T^+(V)$ is not the tensor length: for example, for $v_1\in V_{n_1},\dots ,v_k\in V_{n_k}$, the degree of the tensor $v_1\dots v_k\in V^{\otimes k}$ is now $n_1+\dots +n_k$.

It is an easy exercice to adapt the definition of operads to the graded case: whereas the component ${\mathbf F}_n$ of an operad identifies with the set of multilinear elements in the $n$ letters $x_1,\dots,x_n$ in the free algebra $F(X_n)$, $X_n:=\{x_1,\dots ,x_n\}$, the corresponding graded operad ${\mathbf F}_n^d$ is obtained by considering the set of multilinear elements in the free algebra $F(X_n^d)$, where $X_n^d:=\{x_i^k\}_{i\leq n,d\in\mathbf{N}^\ast}$ and 
where multilinear means now that we consider the subspace $\bigcap\limits_{i\leq n}\Gamma_i$, where $\Gamma_i$ is the eigenspace of $F(X_n^d)$ associated to the eigenvalue $\lambda$ of the map induced by $x_i^d\to\lambda x_i^d,\ x_j^d\to x_j^d$ else.
We call ${\mathbf F}^d=\cup_n{\mathbf F}_n^d$ the (integer-)decorated operad associated to $\mathcal F$-algebras. 

The decorated operad $\QSh^d$ is then spanned by decorated packed words, where:
\begin{defi}
A \emph{decorated packed word} of length $k$ is a pair $(\sigma,d)$, where $\sigma$ is a packed word of length $k$ and $d$ is a map from $\{1,\ldots,k\}$ into
$\mathbb{N}^*$. We denote it by $\left(\begin{array}{ccc} \sigma(1)&\ldots&\sigma(k)\\
d(1)&\ldots&d(k)\end{array}\right)$.
\end{defi}

{\bf Notation.} Let $(\sigma,d)=\left(\begin{array}{ccc} \sigma(1)&\ldots&\sigma(k)\\d(1)&\ldots&d(k)\end{array}\right)$ be a decorated 
packed word. Let $m$ be the maximum of $\sigma$. We define $F_{(\sigma,d)} \in End_K(A)$ in the following way: for all $x_1,\ldots,x_l \in V$, homogeneous,
$$F_{(\sigma,d)}(x_1\ldots x_l)=\left\{
\begin{array}{l}
\displaystyle \left(\prod_{\sigma(i)=1}x_i\right)\ldots \left(\prod_{\sigma(i)=m}x_i\right) 
\begin{array}{rcl}
\mbox{ if }k&=&l\mbox{ and }\\
deg(x_1)&=&d(1),\\
&\vdots&\\
deg(x_k)&=&d(k),\end{array}\\[9mm]
0\mbox{ otherwise}.
\end{array}\right.$$
Note that in each parenthesis, the product is the product $\bullet$ of $V$. For example, if $x,y,z\in V$ are homogeneous,
$$F_{\left(\begin{array}{ccc}2&1&2\\a&b&c\end{array}\right)}(xyz)=y(x\bullet z)$$
if $deg(x)=a$, $deg(y)=b$, and $deg(z)=c$, and $0$ otherwise. \\

The subspace of $End_K(A)$ generated by these maps is stable under composition and the noncommutative quasi-shuffle products:

\begin{prop}\label{23}
Let 
$$(\sigma,d)=\left(\begin{array}{ccc} \sigma(1)&\ldots&\sigma(k)\\d(1)&\ldots&d(k)\end{array}\right)\mbox{ and }
(\tau,e)=\left(\begin{array}{ccc} \tau(1)&\ldots&\tau(l)\\e(1)&\ldots&e(l)\end{array}\right)$$
 be two decorated packed words.
 $max(\tau)=k$ and for all $1\leq j\leq k$, $\displaystyle \sum_{\tau(i)=j}e(i)=d(j)$, then:
$$F_{(\sigma,d)} \circ F_{(\tau,e)}=F_{\scriptsize{\left(\begin{array}{ccc}
\sigma \circ \tau(1)&\ldots&\sigma\circ \tau(l)\\e(1)&\ldots&e(l)\end{array}\right)}}.$$
Otherwise, this composition is equal to $0$. Moreover:
\begin{align*}
&F_{(\sigma,d)}\prec F_{(\tau,e)}\\
&=\sum_{\substack{Pack(u(1)\ldots u(k))=\sigma,\\ Pack(u(k+1)\ldots u(k+l))=\tau,\\min(u(1)\ldots u(k))<min(u(k+1)\ldots u(k+l))}}
F_{\scriptsize{\left(\begin{array}{cccccc}
u(1)&\ldots&u(k)&u(k+1)&\ldots&u(k+l)\\
d(1)&\ldots&d(k)&e(1)&\ldots&e(l)\end{array}\right)}},\\
&F_{(\sigma,d)}\succ F_{(\tau,e)}\\
&=\sum_{\substack{Pack(u(1)\ldots u(k))=\sigma,\\ Pack(u(k+1)\ldots u(k+l))=\tau,\\min(u(1)\ldots u(k))>min(u(k+1)\ldots u(k+l))}}
F_{\scriptsize{\left(\begin{array}{cccccc}
u(1)&\ldots&u(k)&u(k+1)&\ldots&u(k+l)\\
d(1)&\ldots&d(k)&e(1)&\ldots&e(l)\end{array}\right)}},\\
&F_{(\sigma,d)}\bullet F_{(\tau,e)}\\
&=\sum_{\substack{Pack(u(1)\ldots u(k))=\sigma,\\ Pack(u(k+1)\ldots u(k+l))=\tau,\\min(u(1)\ldots u(k))=min(u(k+1)\ldots u(k+l))}}
F_{\scriptsize{\left(\begin{array}{cccccc}
u(1)&\ldots&u(k)&u(k+1)&\ldots&u(k+l)\\
d(1)&\ldots&d(k)&e(1)&\ldots&e(l)\end{array}\right)}}.
\end{align*}\end{prop}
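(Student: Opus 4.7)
The plan is to verify both parts by direct computation, reducing to the undecorated Proposition \ref{7} plus bookkeeping of the grading. The key conceptual point is that $F_{(\sigma,d)}$ acts as a ``degree check then $F_\sigma$'': it vanishes unless the input has length matching $\sigma$ and degree sequence matching $d$, and otherwise coincides with the undecorated $F_\sigma$.

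For the composition formula, evaluate $F_{(\sigma,d)}\circ F_{(\tau,e)}$ on a homogeneous input $x_1\dots x_l$. The inner map vanishes unless $l$ equals the length of $\tau$ and $\deg(x_i)=e(i)$ for every $i$. When it is nonzero, its output is a word of length $\max(\tau)$ whose $j$-th letter is $\prod_{\tau(i)=j}x_i$, carrying degree $\sum_{\tau(i)=j}e(i)$ by additivity of the grading on $(V,\bullet)$. For $F_{(\sigma,d)}$ to act nontrivially on this output, its length must equal $k$ and the $j$-th degree must equal $d(j)$, giving precisely the condition $\max(\tau)=k$ together with $d(j)=\sum_{\tau(i)=j}e(i)$. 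Under this compatibility, Proposition \ref{7}.1 identifies the resulting grouping with the one prescribed by $\sigma\circ\tau$, while the decoration on the output equals $e$ since it records the original input degrees.

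For the three half-products, unfold $F_{(\sigma,d)}\prec F_{(\tau,e)}={\prec}\circ(F_{(\sigma,d)}\otimes F_{(\tau,e)})\circ\tdelta$ on $T^+(V)$, where $\tdelta$ is reduced deconcatenation. Only the splitting of a homogeneous input $x_1\dots x_{k+l}$ at position $k$ contributes, and only when the degree sequence matches $(d,e)$. The resulting $\prec$ of two tensors in $T^+(V)$ expands, by Proposition \ref{7}.2, as a sum over $(k,l)$-quasi-shuffles with the $\prec$-condition that the first output position comes from the first factor. Reparametrizing these quasi-shuffles as packed words $u$ of length $k+l$ whose left and right halves pack respectively to $\sigma$ and $\tau$, the $\prec$-condition translates into $\min(u(1)\dots u(k))<\min(u(k+1)\dots u(k+l))$, and the decoration of the output is the concatenation $(d(1),\dots,d(k),e(1),\dots,e(l))$ because each output position retains the degree of the corresponding input letter. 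The $\succ$ and $\bullet$ cases are identical, with the strict inequality replaced by $>$ and by $=$, matching the conditions $\zeta^{-1}(\{1\})=\{k+1\}$ and $\zeta^{-1}(\{1\})=\{1,k+1\}$ that distinguish $QSh^\succ_{k,l}$ and $QSh^\bullet_{k,l}$. There is no genuine obstacle beyond the bookkeeping: the decorated combinatorics transfers verbatim from Proposition \ref{7}, and the decorations propagate in the only sensible way because every output position carries the degree of a well-defined input position, both under composition and under the half-products.
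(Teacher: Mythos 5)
Your proposal is correct and follows exactly the route the paper intends: the paper's proof is simply ``Direct computations,'' and your argument supplies those computations by reducing to the undecorated Proposition \ref{7} and tracking the grading, with the correct identification of the quasi-shuffle conditions $\zeta^{-1}(\{1\})=\{1\},\{k+1\},\{1,k+1\}$ with the three $\min$ conditions on $u$. Nothing further is needed.
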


\begin{proof} Direct computations. \end{proof} 

{\bf Remarks.} \begin{enumerate}
\item For all packed word $(\sigma(1)\ldots \sigma(n))$:
$$F_{(\sigma(1)\ldots \sigma(n))}=\sum_{d(1),\ldots,d(n)\geq 1}
F_{\left(\begin{array}{ccc}\sigma(1)&\ldots&\sigma(n)\\d(1)&\ldots&d(n)\end{array}\right)}.$$
\item In general, this action of decorated packed words is not faithful. For example, if $V=K[X]_+$, where $X$ is homogeneous of degree $n$,
then $F_{\left(\begin{array}{cc}1&2\\1&1\end{array}\right)} =F_{\left(\begin{array}{cc}2&1\\1&1\end{array}\right)}$.
Indeed, both sends the word $XX$ on itself and all the other words on $0$.
\item Here is an example where the action is faithful. Let $V=K[X_i\mid i\geq 1]_+$, where $X_i$ is homogeneous of degree $1$ for all $i$.
Let us assume that $\sum a_{(\sigma,d)}F_{(\sigma,d)}=0$. Acting on the word $(X_1^{a_1})\ldots (X_k^{a_k})$, we obtain:
$$\sum_{length(\sigma)=k}a_{\left(\begin{array}{ccc}
\sigma(1)&\ldots&\sigma(k)\\a_1&\ldots&a_k\end{array}\right)}
\left(\prod_{\sigma(i)=1} X_i^{a_i}\right)\ldots \left(\prod_{\sigma(i)=max(\sigma)}X_i^{a_i}\right)=0.$$
As the $X_i$ are algebraically independent, the words appearing in this sum are linearly independent, so for all $(\sigma,d)$, $a_{(\sigma,d)}=0$. \\
\end{enumerate}

{\bf Notations.} \begin{enumerate}
\item For all $n \geq 1$, we put:
$$p_n=\sum_{k=1}^n \sum_{d(1)+\ldots+d(k)=n} F_{\scriptsize{\left(\begin{array}{ccc}1&\ldots&k\\d(1)&\ldots&d(k)\end{array}\right)}}.$$
The map $p_n$ is the projection on the space of words of degree $n$, so $\displaystyle \sum_{n\geq 1}p_n=Id_A$.
\item For all $n \geq 1$, we put:
$$q_n=F_{\scriptsize{\left(\begin{array}{c}1\\n\end{array}\right)}}.$$
\end{enumerate}

The map $q_n$ is the projection  on the space of letters of degree $n$, so, by proposition \ref{8}, 
$\displaystyle q=\sum_{n\geq 1}q_n=F_{(1)}$ is the projection $\pi$ of proposition \ref{4}.
It is not difficult to deduce, in the same way as proposition 12 of \cite{Foissy}, the following result:

\begin{theo}\label{qshdesc}
The NQSh subalgebra $QDesc(A)$ of $End_K(A)$ generated by the homogeneous components $p_n$ of $Id_A$ 
is also generated by the homogeneous components $q_n$ of the projection on $Prim(A)$ of proposition \ref{4}. Moroever, for all $n\geq 1$:
\begin{align*}
q_n&=\sum_{k=1}^n (-1)^{k+1}\sum_{a_1+\ldots+a_k=n} p_{a_1}\prec (p_{a_2}\qshuffle \ldots \qshuffle p_{a_k}).
\end{align*}\end{theo}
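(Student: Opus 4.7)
The starting point is the identity $Id_A = \pi + \pi\prec Id_A$, which by Proposition \ref{4} applied to $f=Id_A$ characterizes $\pi$; by Proposition \ref{8} this $\pi$ coincides with $F_{(1)}$, the canonical projection on $V=Prim(A)$, whose homogeneous components are exactly the $q_n$'s. Since $Id_A = \sum_{n\geq 1}p_n$, $\pi = \sum_{n\geq 1}q_n$, and all operations are graded, extracting the degree $n$ component of $Id_A-\pi=\pi\prec Id_A$ gives the fundamental recursion
\[
q_n = p_n - \sum_{\substack{a+b=n\\ a,b\geq 1}} q_a \prec p_b.
\]

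To obtain the closed formula I would iterate this relation, substituting $q_a = p_a - \sum_{a'+b'=a}q_{a'}\prec p_{b'}$ at each step. The crucial algebraic fact is axiom (\ref{E1}), $(x\prec y)\prec z = x\prec(y\star z) = x\prec(y\qshuffle z)$, combined with the associativity of $\qshuffle$: these turn any nested expression $(q_{a'}\prec p_{b'})\prec(p_{c_1}\qshuffle\cdots\qshuffle p_{c_r})$ into $q_{a'}\prec(p_{b'}\qshuffle p_{c_1}\qshuffle\cdots\qshuffle p_{c_r})$. A direct induction on the number $N\geq 0$ of iterations then yields
\[
q_n = \sum_{k=1}^{N+1}(-1)^{k+1}\sum_{a_1+\cdots+a_k=n} p_{a_1}\prec(p_{a_2}\qshuffle\cdots\qshuffle p_{a_k}) + R_{N+1},
\]
where the remainder $R_{N+1}=(-1)^{N+1}\sum_{a_1+\cdots+a_{N+2}=n} q_{a_1}\prec(p_{a_2}\qshuffle\cdots\qshuffle p_{a_{N+2}})$ vanishes as soon as $N+2>n$, since $n$ cannot be written as a sum of more than $n$ positive parts. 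Taking $N=n-1$ yields the announced identity, with the convention that the $k=1$ summand reduces to $p_n$.

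The formula makes it clear that every $q_n$ lies in the NQSh subalgebra of $End_K(A)$ generated by $\{p_k\}_{k\geq 1}$, so the subalgebra generated by the $q_n$'s is contained in $QDesc(A)$. Conversely, the same recursion rewritten as $p_n = q_n + \sum_{a+b=n}q_a\prec p_b$ expresses $p_n$ in terms of $q_n$ and lower $p_b$'s; starting from $p_1=q_1$, induction on $n$ produces $p_n$ inside the subalgebra generated by the $q_k$'s, giving the reverse inclusion. The two subalgebras therefore coincide.

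The main obstacle is the bookkeeping of signs and indices in the iterative step; once the shape of the partial expansion after $N$ iterations is correctly guessed, axiom (\ref{E1}) and the associativity of $\qshuffle$ make every new nested $\prec$-product collapse into the canonical form $p_{a_1}\prec(p_{a_2}\qshuffle\cdots\qshuffle p_{a_k})$, so the induction propagates without additional algebraic input and terminates automatically after $n-1$ steps.
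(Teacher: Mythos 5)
Your argument is correct, and it is essentially the proof the paper has in mind (the paper only points to the analogous Proposition 12 of \cite{Foissy} without writing out details): one extracts graded components from the defining relation $Id_A=\pi+\pi\prec Id_A$ to get $q_n=p_n-\sum_{a+b=n}q_a\prec p_b$, then iterates, using axiom (\ref{E1}) together with the identification of $\star$ with $\qshuffle$ on $End_K(A)$ to collapse the nested $\prec$-products, the process terminating because $n$ has no compositions into more than $n$ positive parts. The two-sided generation statement then follows exactly as you say, by reading the recursion in both directions.
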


{\bf Remark.} This result is the quasi-shuffle analog of the statement that the descent algebra of a graded connected cocommutative Hopf algebra $H$  (the convolution subalgebra of $End(H)$ generated by the graded projections) is equivalently generated by the graded components of the convolution logarithm of the identity \cite{patras1994algebre}.

\section{Freeness of the decorated quasi-shuffle operad}\label{sect:11}

In this section, we show that the decorated quasi-shuffle operad $\QSh^d$ is free as a NSh algebra using the bidendriform techniques developed in \cite{foissy2007bidendriform}.

We denote by ${\QSh}^d_+$ the subspace of the decorated quasi-shuffle operad generated by nonempty decorated packed words. As for a well-chosen graded quasi-shuffle bialgebra $A$
the action of packed words is faithful, we deduce that ${\QSh}^d_+$ inherits a NQSh algebra structure by:
\begin{align*}
&(\sigma,d)\prec (\tau,e)\\
&=\sum_{\substack{Pack(u(1)\ldots u(k))=\sigma,\\ Pack(u(k+1)\ldots u(k+l))=\tau,\\min(u(1)\ldots u(k))<min(u(k+1)\ldots u(k+l))}}
\scriptsize{\left(\begin{array}{cccccc}
u(1)&\ldots&u(k)&u(k+1)&\ldots&u(k+l)\\
d(1)&\ldots&d(k)&e(1)&\ldots&e(l)\end{array}\right)},\\
&(\sigma,d)\succ (\tau,e)\\
&=\sum_{\substack{Pack(u(1)\ldots u(k))=\sigma,\\ Pack(u(k+1)\ldots u(k+l))=\tau,\\min(u(1)\ldots u(k))>min(u(k+1)\ldots u(k+l))}}
\scriptsize{\left(\begin{array}{cccccc}
u(1)&\ldots&u(k)&u(k+1)&\ldots&u(k+l)\\
d(1)&\ldots&d(k)&e(1)&\ldots&e(l)\end{array}\right)},\\
&(\sigma,d)\bullet (\tau,e)\\
&=\sum_{\substack{Pack(u(1)\ldots u(k))=\sigma,\\ Pack(u(k+1)\ldots u(k+l))=\tau,\\min(u(1)\ldots u(k))=min(u(k+1)\ldots u(k+l))}}
\scriptsize{\left(\begin{array}{cccccc}
u(1)&\ldots&u(k)&u(k+1)&\ldots&u(k+l)\\
d(1)&\ldots&d(k)&e(1)&\ldots&e(l)\end{array}\right)}.
\end{align*}

{\bf Notations.} Let $(\sigma,d)$ be a decorated packed word of length $k$ and let $I\subseteq \{1,\ldots,max(\sigma)\}$.
We put $\sigma^{-1}(I)=\{i_1,\ldots,i_l\}$, with $i_1<\ldots<i_l$. The decorated packed word $(\sigma,d)_{\mid I}$ is
$(Pack(\sigma(i_1),\ldots,\sigma(i_l)),(d(i_1),\ldots,d(i_l)))$.

\begin{defi}
We define two coproducts on ${\QSh}^d_+$ in the following way: for all nonempty packed word $(\sigma,d)$,
\begin{align*}
\Delta_\prec(\sigma,d)&=\sum_{i=\sigma(1)}^{max(\sigma)-1} (\sigma,d)_{\mid \{1,\ldots,i\}} \otimes (\sigma,d)_{\mid
\{\{i+1,\ldots,max(\sigma)\}},\\
\Delta_\succ(\sigma,d)&=\sum_ {i=1}^{\sigma(1)-1} (\sigma,d)_{\mid \{1,\ldots,i\}} \otimes (\sigma,d)_{\mid
\{\{i+1,\ldots,max(\sigma)\}}.
\end{align*}
Then ${\QSh}^d_+$ is a NSh coalgebra, that is to say:
\begin{align}
(\Delta_\prec \otimes Id) \circ \Delta_\prec&=(Id \otimes (\Delta_\prec+\Delta_\succ))\circ \Delta_\prec,\\
(\Delta_\succ \otimes Id)\circ \Delta_\prec&=(Id \otimes \Delta_\prec) \circ \Delta_\succ,\\
((\Delta_\prec+\Delta_\succ)\otimes Id) \circ \Delta_\succ&=(Id \otimes \Delta_\succ)\circ \Delta_\succ.
\end{align}
For all $a,b \in {\QSh}^d_+$:
\begin{align}
\label{E38} \Delta_\prec(a\prec b)&=a'_\prec \prec b'\otimes a''_\prec \star b''+a'_\prec \prec b\otimes a''_\prec+a'_\prec\otimes a''_\prec \star b\\
\nonumber &+a\prec b'\otimes b''+a\otimes b,\\
\Delta_\prec(a\succ b)&=a'_\prec \succ b'\otimes a''_\prec \star b''+a\succ b'\otimes b''+a'_\prec \succ b \otimes a''_\prec,\\
\Delta_\prec(a\bullet b)&=a'_\prec \bullet b'\otimes a''_\prec \star b''+a'_\prec \bullet b\otimes a''_\prec+a\bullet b'\otimes b'',\\
\Delta_\succ(a\prec b)&=a'_\succ \prec b'\otimes a''_\succ \star b''+a'_\succ \prec b\otimes a''_\succ+a'_\succ \otimes a''_\succ \star b,\\
\Delta_\succ(a\succ b)&=a'_\succ \succ b''\otimes a''_\succ \star b''+a'_\succ \succ b \otimes a''_\succ+b'_\succ \otimes a\star b''+b \otimes a,\\
\label{E43}\Delta_\succ (a\bullet b)&=a'_\succ \bullet b'\otimes a''_\succ \star b''+a'_\succ \bullet b \otimes a''_\succ.
\end{align}\end{defi}

\begin{proof} Let $(\sigma,d)$ be a decorated packed word. Then:
\begin{align*}
&(\Delta_\prec \otimes Id) \circ \Delta_\prec(\sigma,d)=(Id \otimes (\Delta_\prec+\Delta_\succ))\circ \Delta_\prec(\sigma,d)\\
&=\sum_{\sigma(1)\leq i<j\leq max(\sigma)-1}
(\sigma,d)_{\mid \{1,\ldots,i\}}\otimes (\sigma,d)_{\mid \{i+1,\ldots,j\}}\otimes (\sigma,d)_{\mid\{j+1,\ldots,max(\sigma)\}},\\ \\
&(\Delta_\succ \otimes Id)\circ \Delta_\prec(\sigma,d)=(Id \otimes \Delta_\prec) \circ \Delta_\succ(\sigma,d)\\
&=\sum_{1\leq i<\sigma(1)\leq j\leq max(\sigma)-1}
(\sigma,d)_{\mid \{1,\ldots,i\}}\otimes (\sigma,d)_{\mid \{i+1,\ldots,j\}}\otimes (\sigma,d)_{\mid\{j+1,\ldots,max(\sigma)\}},\\ \\
&((\Delta_\prec+\Delta_\succ)\otimes Id) \circ \Delta_\succ(\sigma,d)=(Id \otimes \Delta_\succ)\circ \Delta_\succ(\sigma,d)\\
&=\sum_{1\leq i<j<\sigma(1)}
(\sigma,d)_{\mid \{1,\ldots,i\}}\otimes (\sigma,d)_{\mid \{i+1,\ldots,j\}}\otimes (\sigma,d)_{\mid\{j+1,\ldots,max(\sigma)\}}.
\end{align*}

Let us prove (\ref{E38}), for $a=(\sigma,d)$ and $b=(\tau,e)$ two decorated packed words of respective length $k$ and $l$. We put:
$$a\otimes b=\left(\begin{array}{cccccc}
\sigma(1)&\ldots&\sigma(k)&\tau(1)+max(\sigma)&\ldots&\tau(l)+max(\tau)\\
d(1)&\ldots&d(k)&e(1)&\ldots&e(l)
\end{array}\right).$$
 Then $a\prec b$ is the sum of all decorated packed words obtained by quasi-shuffling in all possible ways the values of the letters in the first row of $a\otimes b$, in such a way 
that $1$ occurs only in the first $k$ columns; $\Delta_\prec(a\otimes b)$ is then given by separating the letters of the first row of these decorated packed words
in such a way that the first letter appears in the left side. So at least one of the $k$ first letters appears on the left side. This gives five possible cases:
\begin{enumerate}
\item All the $k$ first letters are on the left and all the $l$ last letters are on the right. Necessarily, this case comes from the decorated packed word $a\otimes b$,
and this gives the term $a\otimes b$.
\item All the $k$ first letters are on the left and at least one of the $l$ last letters is on the left. This gives the term $a\prec b'\otimes b''$.
\item At least one of the $k$ first letters is on the right and all the $l$ last letters are on the left. This gives the term $a'_\prec \prec b \otimes a''_\prec$.
\item At least one of the $k$ first letters is on the right and all the $l$ last letters are on the right. This gives the term $a'_\prec \otimes a''_\prec \star b$.
\item At least one of the $k$ first letters is on the right and there are some of the $l$ last letters on both sides. This gives the term $a'_\prec \prec b'\otimes a''_\prec \star b''$.
\end{enumerate}
Summing all these terms, we obtain (\ref{E38}). The other compatibilities can be proved similarly. \end{proof}

{\bf Remark.} We also obtain, by addition:
\begin{align}
\Delta_\prec(a\preceq b)&=a'_\prec \preceq b'\otimes a''_\prec \star b''+a'_\prec \preceq b\otimes a''_\prec+a'_\prec\otimes a''_\prec \star b\\
\nonumber&+a\preceq b'\otimes b''+a\otimes b,\\
\Delta_\prec(a\succeq b)&=a'_\prec \succeq b'\otimes a''_\prec \star b''+a\succeq b'\otimes b''+a'_\prec \succeq b \otimes a''_\prec,\\
\Delta_\succ(a\preceq b)&=a'_\succ \preceq b'\otimes a''_\succ \star b''+a'_\succ \preceq b\otimes a''_\succ+a'_\succ \otimes a''_\succ \star b,\\
\Delta_\succ(a\succeq b)&=a'_\succ \succeq b''\otimes a''_\succ \star b''+a'_\succ \succeq b \otimes a''_\succ+b'_\succ \otimes a\star b''+b \otimes a;\\
\nonumber \\
\tdelta(a \prec b)&=a'\prec b'\otimes a''\star b''+a'\prec b\otimes a''+a'\otimes a''\star b\\
\nonumber&+a\prec b'\otimes b''+a\otimes b,\\
\tdelta(a\succ b)&=a'\succ b'\otimes a''\star b''+a'\succ b\otimes a''+a\succ b'\otimes b''\\
\nonumber&+b'\otimes a\star b''+b\otimes a,\\
\tdelta(a\bullet b)&=a'\bullet b'\otimes a''\star b''+a'\bullet b\otimes a''+a\bullet b'\otimes b'';\\
\nonumber \\
\tdelta(a \preceq b)&=a'\preceq b'\otimes a''\star b''+a'\preceq b\otimes a''+a'\otimes a''\star b\\
\nonumber&+a\preceq b'\otimes b''+a\otimes b,\\
\tdelta(a\succeq b)&=a'\succeq b'\otimes a''\star b''+a'\succeq b\otimes a''+a\succeq b'\otimes b''\\
\nonumber&+b'\otimes a\star b''+b\otimes a.
\end{align}
Consequently, $({\QSh}^d_+,\succ^{op},\preceq^{op},\Delta_\succ^{op},\Delta_\prec^{op})$
and $({\QSh}^d_+,\succeq^{op},\prec^{op},\Delta_\succ^{op},\Delta_\prec^{op})$ are bidendriform bialgebras \cite{foissy2007bidendriform}. By the bidendriform rigidity theorem,
$({\QSh}^d_+,\preceq,\succ)$ and $({\QSh}^d_+,\prec,\succeq)$ are free NSh algebras.
Forgetting the decoration, we get back theorem 2.5 of \cite{Novelli}, up to a permutation of maximum and minimum, and first and last letters.\\

Forgeting the decorations, we obtain a NQSh algebra structure on ${\QSh}_+$ and a NSh coalgebra structure, with compatibilities
(\ref{E38})-(\ref{E43}). 
Let us describe, for completeness sake, the dual (half-)products and coproducts. The elements of the dual basis of packed words are denoted by $N_u$.

\begin{prop}\begin{enumerate}
\item For all nonempty packed words $\sigma,\tau$, of respective lengths $k$ and $l$:
\begin{align*}
N_\sigma \prec N_\tau&=\sum_{\alpha \in Sh_{k,l}^\prec} N_{(\sigma\otimes \tau) \circ \alpha^{-1}},&
N_\sigma \succ N_\tau&=\sum_{\alpha \in Sh_{k,l}^\succ} N_{(\sigma\otimes \tau) \circ \alpha^{-1}}.
\end{align*}
\item For any nonempty packed word $\sigma$ of length $n$, denoting by $f(\sigma)$ the index of the first appearance of $1$ in $\sigma$ 
and by $l(\sigma)$ the index of the last appearance of $1$ in $\sigma$:
\begin{align*}
\tdelta_\prec(N_\sigma)&=\sum_{k=l(\sigma)}^{n-1} N_{pack(\sigma(1)\ldots \sigma(k))}\otimes N_{pack(\sigma(k+1)\ldots \sigma(n))},\\
\tdelta_\succ(N_\sigma)&=\sum_{k=1}^{f(\sigma)-1} N_{pack(\sigma(1)\ldots \sigma(k))}\otimes N_{pack(\sigma(k+1)\ldots \sigma(n))},\\
\tdelta_\bullet(N_\sigma)&=\sum_{k=f(\sigma)}^{l(\sigma)-1} N_{pack(\sigma(1)\ldots \sigma(k))}\otimes N_{pack(\sigma(k+1)\ldots \sigma(n))}.
\end{align*}\end{enumerate}\end{prop}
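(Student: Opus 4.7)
Under the pairing $\langle N_\sigma, F_\tau\rangle=\delta_{\sigma,\tau}$, the product $\prec$ on the $N$-basis is dual to the coproduct $\Delta_\prec$ on $\QSh_+$, and symmetrically for $\succ$; the coproducts $\tdelta_\prec$, $\tdelta_\succ$, $\tdelta_\bullet$ on the $N$-basis are dual to the three half-products $\prec,\succ,\bullet$ on $\QSh_+$ described in Proposition~\ref{7}. Both parts of the statement will be obtained by unpacking these dualities.

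For part (1), the coefficient of $N_\pi$ in $N_\sigma\prec N_\tau$ equals the coefficient of $\sigma\otimes\tau$ in
$$\Delta_\prec(\pi)=\sum_{i=\pi(1)}^{\max(\pi)-1} pack(\pi_{\mid\{1,\ldots,i\}})\otimes pack(\pi_{\mid\{i+1,\ldots,\max(\pi)\}}).$$
Since $\sigma$ has maximum $\max(\sigma)$, this tensor can only arise for $i=\max(\sigma)$, in which case the two value-blocks of $\pi$ must pack respectively to $\sigma$ and to $\tau$. The packed words $\pi$ of length $k+l$ with this property are precisely those obtained by distributing the positions of $\sigma$ and of $\tau[\max\sigma]$ along some $(k,l)$-shuffle $\alpha$, namely $\pi=(\sigma\otimes\tau)\circ\alpha^{-1}$. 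The residual constraint $i\geq\pi(1)$ reads $\pi(1)\leq\max(\sigma)$, which for a $(k,l)$-shuffle amounts to $\alpha(1)=1$, i.e.\ $\alpha\in Sh_{k,l}^\prec$. The case of $\succ$ is symmetric: position $1$ of $\pi$ then has to carry a $\tau$-value, forcing $\alpha(k+1)=1$ and so $\alpha\in Sh_{k,l}^\succ$.

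For part (2), I dualize Proposition~\ref{7}.2. The coefficient of $N_\mu\otimes N_\nu$ in $\tdelta_\prec(N_\sigma)$ equals the coefficient of $F_\sigma$ in $F_\mu\prec F_\nu=\sum_{\zeta\in QSh^\prec}F_{\zeta\circ(\mu\otimes\nu)}$. Given $\sigma$ of length $n$, any factorization $\sigma=\zeta\circ(\mu\otimes\nu)$ with $|\mu|=k\in\{1,\ldots,n-1\}$ is entirely forced: one must take $\mu=pack(\sigma(1)\ldots\sigma(k))$, $\nu=pack(\sigma(k+1)\ldots\sigma(n))$, and then $\zeta$ merges a value of $\mu$ with the corresponding shifted value of $\nu$ exactly when the associated positions in $\sigma$ share a common letter across the split. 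Monotonicity of $\zeta$ on each value-block, hence $\zeta\in QSh_{\max\mu,\max\nu}$, is immediate from the definition of $pack$. The condition $\zeta^{-1}(\{1\})=\{1\}$ then translates to: the letter $1$ of $\sigma$ occurs only among $\sigma(1),\ldots,\sigma(k)$, equivalently $k\geq l(\sigma)$. The parallel conditions $\zeta^{-1}(\{1\})=\{\max\mu+1\}$ and $\zeta^{-1}(\{1\})=\{1,\max\mu+1\}$ translate respectively into $k\leq f(\sigma)-1$ and $f(\sigma)\leq k\leq l(\sigma)-1$, and the three ranges disjointly exhaust $\{1,\ldots,n-1\}$.

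The only genuine bookkeeping, and the place where something could conceivably go wrong, is to check that the recipe $(\sigma,k)\mapsto(\mu,\nu,\zeta)$ of part~(2) produces, for every admissible $k$, a unique triple with $\zeta$ in the correct $QSh^{\prec/\succ/\bullet}$-class, and bijectively recovers every such triple appearing in the operadic expansion of $F_\mu\prec F_\nu$. This is however immediate from the definition of $pack$ and from the way values of $\sigma$ may or may not merge across the split; no additional combinatorial identity is required beyond translating the position-of-$1$ condition on $\zeta$ into the position-of-$1$ condition on $\sigma$ expressed through $f(\sigma)$ and $l(\sigma)$.
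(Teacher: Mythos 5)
The paper states this proposition without proof (it is offered ``for completeness sake'' as a description of the structures dual to the products and coproducts introduced earlier in Section \ref{sect:11}), and your dualization argument is exactly the intended one: unwind $\langle N_\sigma\prec N_\tau,\pi\rangle=\langle N_\sigma\otimes N_\tau,\Delta_\prec(\pi)\rangle$ and $\langle\tdelta_\prec(N_\sigma),\mu\otimes\nu\rangle=\langle N_\sigma,\mu\prec\nu\rangle$. All the translations check out — the cut $i=\max(\sigma)$ is forced in $\Delta_\prec(\pi)$, the position-of-$1$ conditions on $\alpha$ (resp.\ on $\zeta$) correctly become membership in $Sh_{k,l}^{\prec}$ or $Sh_{k,l}^{\succ}$ (resp.\ the inequalities involving $f(\sigma)$ and $l(\sigma)$), and the uniqueness of the triple $(\mu,\nu,\zeta)$ for a given $(\sigma,k)$ is the same factorization uniqueness already used in the proof of the theorem on $\Phi$ in Section \ref{sect:7}.
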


\section{The quasi-shuffle analog of the descent algebra}\label{sect:12}

Recall that, given a graded NQSh bialgebra $A$, we introduced $QDesc(A)$, the quasi-shuffle analogue of the descent algebra defined as the NQSh subalgebra of $End(A)$ generated by the graded projections or, equivalently, by the graded components of the projection on $Prim(A)$. 
We write $\QDesc$ for the corresponding NQSh subalgebra of ${\QSh}^d$ (the subalgebra generated by the ${1}\choose{d}$).

Recall first some properties of NSh algebras.\\

{\bf Notations.} Let $n \geq 1$.
\begin{enumerate}
\item \begin{enumerate}
\item  Let $\TSchroder(n)$ be the set of Schr\"oder trees of degree $n$, that is to say reduced planar rooted trees with $n+1$ leaves. 
\item For any set $D$, let $\TSchroder^D(n)$ be the set of reduced planar rooted trees $t$ with $n+1$ leaves, 
such that the $n$ spaces between the leaves of $t$ are decorated by elements of $D$. 
\item $\displaystyle \TSchroder^D=\bigsqcup_{n\geq 1}\TSchroder^D(n)$.
\end{enumerate}
\item Let $t_1,\ldots,t_k\in \TSchroder^{\N^*}$ and let $d_1,\ldots,d_{k-1} \in \N^*$. The element $t_1\vee_{d_1}\ldots \vee_{d_{k-1}} t_k$
is obtained by grafting $t_1,\ldots,t_k$ on a common root;
for all $1\leq i \leq k$, the space between the right leaf of $t_i$ and the left leaf of $t_{i+1}$ is decorated by $d_i$. 
\end{enumerate}

Following \cite{Loday}, $\TSchroder^D$ is a basis of the free NQSh algebra generated by $D$, $NQSh(D)$.
The three products are inductively defined: if $t=t_1\vee_{d_1}\ldots \vee_{d_{k-1}} t_k$ and $t'=t'_1\vee_{d'_1}\ldots \vee_{d'_{l-1}} t'_l \in \TSchroder(D)$, then
\begin{align*}
t\succ t'&=(t\star t'_1) \vee_{d'_1}t'_2 \vee_{d'_2} \ldots \vee_{d'_{l-1}}t'_l,\\
t\prec t'&=t_1\vee_{d_1}\ldots \vee t_{k-1} \vee_{d_{k-2}} \ldots \vee_{d_{k-1}}(t_k\star t'),\\
t\bullet t'&=t_1\vee_{d_1}\ldots \vee_{d_{k-1}} (t_k\star t'_1) \vee_{d'_1}\ldots \vee_{d'_{l-1}} t'_l.
\end{align*}

Sending any non binary tree to $0$, we obtain the free NSh algebra $NSh(D)$ generated by $D$.
A basis is given by the set of planar binary trees $\Tbin(D)\subseteq \TSchroder(D)$ whose spaces between the leaves are decorated 
by elements of $D$. The products are given in the following way: if  $t=t_1 \vee_d t_2$ and $t'=t'_1 \vee_{d'} t'_2$, then:
\begin{align*}
t\succ t'&=(t\star t'_1)\vee_{d'} t'_2,\\
t\prec t'&=t_1 \vee_d (t_2 \star t').
\end{align*}

We denote by $NQSh(1)$ and by $NSh(1)$ the free NQSh and the free NSh algebra on one generator. 
The set $\TSchroder$ is a basis of $NQSh(1)$, and $\Tbin$ is a basis of $NSh(1)$.\\

{\bf Examples.}
\begin{align*}
\TSchroder(0)&=\Tbin(0)=\{\bun\},&
\TSchroder(1)&=\Tbin(1)=\{\bdeux\},\\
\TSchroder(2)&=\left\{\btroisun,\btroisdeux,\schtrois\right\},&
\Tbin(2)&=\left\{\btroisun,\btroisdeux\right\},\\
\TSchroder(3)&=\left\{\begin{array}{c}\bquatreun,\bquatredeux,\bquatretrois,\bquatrequatre\\
\bquatrecinq,\schquatreun,\schquatredeux,\schquatretrois,\\
\schquatrequatre,\schquatrecinq,\schquatresix \end{array}\right\},&
\Tbin(3)&=\left\{\bquatreun,\bquatredeux,\bquatretrois,\bquatrequatre,\bquatrecinq\right\}.
\end{align*}

We define now inductively a surjective map $\varrho$ from the set of  packed words decorated by $D$ into $\TSchroder^D$ in the following way:
\begin{enumerate}
\item $\varrho(1)=\bun$.
\item If $w=(\sigma,d)$, let $\sigma^{-1}(1)=\{i_1,\ldots,i_k\}$, $i_1<\ldots<i_k$.
We put:
\begin{align*}
w_1&=Pack\left(
\begin{array}{ccc}
\sigma(1)&\ldots&\sigma(i_1-1)\\
d(1)&\ldots&d(i_1-1)\end{array}\right),\\
w_2&=Pack\left(\begin{array}{ccc}
\sigma(i_1+1)&\ldots&\sigma(i_2-1)\\
d(i_1+1)&\ldots&d(i_2-1)\end{array}\right),\\
&\vdots&\\
w_{k+1}&=Pack\left(\begin{array}{ccc}
\sigma(i_k+1)&\ldots &\sigma(n)\\
d(i_k+1)&\ldots&d(n)
\end{array}\right). \end{align*}
Then:
$$\varrho(\sigma,d)=\varrho(w_1)\vee_{d(i_1)}\ldots\vee_{d(i_k)}\varrho(w_{k+1}).$$
\end{enumerate}

If $w=(\sigma,d)$ is a decorated packed word of length $n$, $\varrho(w)$ is an element of $\TSchroder^D(n)$ such that the spaces between the leaves
are decorated from left to right by $d(1),\ldots,d(n)$. In particular $\varrho{1\choose d}$ is the  tree $\bdeux$ d-decorated. \\

For any $t\in \TSchroder^{\N^*}$, we put:
$$\Omega(t)=\sum_{\sigma\in \Surj, \varrho(\sigma)=t}\sigma \in {\QSh}^d_+.$$
We extend $\Omega:NQSh(\N^*)\longrightarrow {\QSh}^d_+$ by linearity map. It is clearly injective.\\

{\bf Examples.} 
\begin{align*}
\Omega(\bdeux)&=(1),&
\Omega(\btroisun)&=(21),&
\Omega(\btroisdeux)&=(12),\\
\Omega(\schtrois)&=(11),&
\Omega(\bquatreun)&=(321),&
\Omega(\bquatredeux)&=(231),\\
\Omega(\bquatretrois)&=(132),&
\Omega(\bquatrequatre)&=(123),&
\Omega(\bquatrecinq)&=(212)+(312)+(213),\\
\Omega(\schquatreun)&=(221),&
\Omega(\schquatredeux)&=(211),&
\Omega(\schquatretrois)&=(121),\\
\Omega(\schquatrequatre)&=(112),&
\Omega(\schquatrecinq)&=(122),&
\Omega(\schquatresix)&=(111).
\end{align*}

\begin{theo}
The map $\Omega$ is an injective morphism of NQSh algebras. Consequently, $\QDesc$,
the NQSh subalgebra of ${\QSh}^d_+$ generated by the elements $\binom{1}{d}$, $d\geq 1$, is free and isomorphic to $NQSh({\N}^\ast )$.
\end{theo}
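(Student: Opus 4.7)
First I would note that $\Omega$ is injective: the map $\varrho$ is a surjection from decorated packed words onto $\TSchroder^{\N^\ast}$, and its fibers partition the canonical basis of ${\QSh}^d_+$. Since $\Omega(t)=\sum_{w\in \varrho^{-1}(t)}w$, the elements $\Omega(t)$, $t\in \TSchroder^{\N^\ast}$, have pairwise disjoint support on the basis and are therefore linearly independent.

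The heart of the proof consists in verifying that $\Omega$ is a morphism of NQSh algebras, namely that for $\diamond\in\{\prec,\succ,\bullet\}$,
$$\Omega(t\diamond t')=\Omega(t)\diamond \Omega(t').$$
I would proceed by simultaneous induction on the total degree $|t|+|t'|$. Writing the grafting decompositions $t=t_1\vee_{d_1}\cdots\vee_{d_{k-1}}t_k$ and $t'=t'_1\vee_{d'_1}\cdots\vee_{d'_{l-1}}t'_l$, the recursive definitions of the NQSh operations on $\TSchroder^{\N^\ast}$ express $t\diamond t'$ in terms of $\star$-products of proper subtrees of $t,t'$, to which the induction hypothesis applies (since the total degree strictly decreases). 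On the other side, Proposition~\ref{23} describes each $\sigma\diamond\tau$ as a sum of decorated packed words $u$ obtained by quasi-shuffling the values of $\sigma$ and $\tau$, with the constraint that the minimum value $1$ appear only in the left block (for $\prec$), only in the right (for $\succ$), or in both (for $\bullet$).

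The key combinatorial step is to show that the assignment $(\sigma,\tau,u)\mapsto u$ gives a bijection between the triples contributing to $\Omega(t)\diamond\Omega(t')$ and the elements of $\varrho^{-1}(t\diamond t')$. The recursive definition of $\varrho$ reads off the root decomposition of $\varrho(u)$ from the positions of $1$ in $u$. Given $\varrho(\sigma)=t$ and $\varrho(\tau)=t'$, the positions of $1$ in $\sigma$ (shifted) and in $\tau$ combine, under the $\diamond$-constraint, into the positions of $1$ in $u$. Case analysis shows that the first $k-1$ (resp.\ $l-1$, resp.\ $k-1$) of these match the root separators of $t\diamond t'$, while the remaining root subtree of $\varrho(u)$ is $\varrho(v)$ for a packed word $v$ lying in the support of a $\star$-product of proper subwords --- namely of the last subword of $\sigma$ with $\tau$ for $\prec$, of $\sigma$ with the first subword of $\tau$ for $\succ$, and of the last subword of $\sigma$ with the first subword of $\tau$ for $\bullet$. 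The induction hypothesis applied to these smaller instances identifies them with $t_k\star t'$, $t\star t'_1$ or $t_k\star t'_1$, exactly matching the recursive formulas for the three tree products. Reversing the argument shows each $w\in\varrho^{-1}(t\diamond t')$ arises uniquely in this way.

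Once $\Omega$ is shown to be an injective NQSh morphism, the consequences are immediate: $\binom{1}{d}=\Omega(\bdeux_d)$ lies in $\mathrm{Im}(\Omega)$ for every $d\geq 1$, so $\QDesc\subseteq \mathrm{Im}(\Omega)$; conversely $\mathrm{Im}(\Omega)$ is the NQSh subalgebra generated by these same elements, so it coincides with $\QDesc$. Combined with injectivity and the freeness of $NQSh(\N^\ast)$, this gives the isomorphism $\QDesc\cong NQSh(\N^\ast)$ and yields the freeness of $\QDesc$. The main obstacle is the bookkeeping in the bijection step: the three operations $\prec,\succ,\bullet$ must be handled simultaneously because they couple through $\star$, and the analysis of where the minima of $\sigma$ and $\tau$ sit in $u$ simultaneously determines which operation arises and which proper subtree of $\varrho(u)$ is subjected to the recursive treatment.
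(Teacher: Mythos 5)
Your argument is correct and follows essentially the same route as the paper: what you phrase as a bijection between triples $(\sigma,\tau,u)$ and the fiber $\varrho^{-1}(t\diamond t')$, obtained by tracking the positions of the letter $1$, the paper packages as the single identity $\Omega(t_1\vee_{d_1}\cdots\vee_{d_{k-1}}t_k)=ins_{|t_1|,\ldots,|t_k|}^{d_1,\ldots,d_{k-1}}(\Omega(t_1)\star\cdots\star\Omega(t_k))$ (insertion of the decorated $1$'s at the separator positions), and then concludes by the same induction on total degree using the recursive formulas for $\prec,\succ,\bullet$ on decorated Schr\"oder trees. Your treatment of injectivity (disjoint supports of the fibers of $\varrho$) and of the identification $\mathrm{Im}(\Omega)=\QDesc$ matches the paper's as well.
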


\begin{proof}  Let $w=(\sigma,d)$  be a packed word of length $n$ and let $i_1,\ldots,i_k$ be integers such that $i_1+\ldots+i_k=n$.
For all $d_1,\ldots,d_{k-1}\geq 1$, we put:
\begin{align*}
&ins_{i_1,\ldots,i_k}^{d_1,\ldots,d_{k-1}}(w)\\
&=\scriptsize{\left(\begin{array}{ccccccccc}
\sigma(1)+1&\ldots&\sigma(i_1)+1&1&\ldots&1&\sigma(i_1+\ldots+i_{k-1}+1)+1&\ldots&\sigma(n)+1\\
d(1)&\ldots&d(i_1)&d_1&\ldots&d_{k-1}&d(i_1+\ldots+i_{k-1}+1)&\ldots &d(n)
\end{array}\right)}.
\end{align*}

It is not difficult to show that:
$$\Omega(t_1\vee_{d_1}\ldots \vee_{d_{k-1}}t_k)
=ins_{|t_1|,\ldots,|t_k|}^{d_1,\ldots,d_{k-1}}(\Omega(t_1)\star\ldots \star \Omega(t_k)).$$
Hence, if $t=t_1\vee_{d_1}\ldots \vee_{d_{k-1}} t_k$ and $t'=t'_1\vee_{d'_1}\ldots \vee_{d'_{l-1}} t'_l$:
\begin{align*}
\Omega(t)\succ \Omega(t')&=ins_{|t|+|t'_1|,\ldots, |t'_l|}^{d'_1,\ldots,d'_{l-1}}(\Omega(t) \star \Omega(t'_1)\star\ldots \star \Omega(t'_l)),\\
\Omega(t)\prec \Omega(t')&=ins_{|t_1|,\ldots, |t_k|+|t|}^{d_1,\ldots,d_{k-1}}(\Omega(t_1) \star \ldots \star \Omega(t_k) \star \Omega(t')),\\
\Omega(t)\bullet \Omega(t')&=ins_{[t_1|,\ldots,|t_k|+|t'_1|,\ldots,|t'_l|}^{d_1,\ldots,d_{k-1},d'_1,\ldots,d'_{l-1}}
(\Omega(t_1)\star \ldots \star \Omega(t_k)\star \Omega(t'_1)\star \ldots \star \Omega(t'_l)).
\end{align*}
An induction on $m+n$ proves that for
$t\in \TSchroder^{\N^*}(m)$, $t'\in \TSchroder^{\N^*}(n)$:
\begin{align*}
\Omega(t\succ t')&=\Omega(t)\succ \Omega(t'),&
\Omega(t\prec t')&=\Omega(t)\prec \Omega(t'),&
\Omega(t\bullet t')&=\Omega(t)\bullet \Omega(t').
\end{align*}
 So $\Omega$ is an injective morphism of NQSh algebras. \end{proof}

\section{Lie theory, continued}\label{sect:13}

In classical Lie theory, it has been realized progressively that many applications of the combinatorial part of the theory rely on the freeness of the Malvenuto-Reutenauer algebra of permutations (for us, the operad $\Sh$ or, equivalently, the  algebra of free quasi-symmetric functions $\FQSym$) as a noncommutative shuffle bialgebra (and more precisely, as a bidendriform bialgebra \cite{foissy2007bidendriform}). 
As such, $\Sh$ has two remarquable subalgebras. The first is $\PBT$, the noncommutative shuffle sub-bialgebra freely generated as a noncommutative shuffle algebra by the identity permutation in $\S_1$ (in particular $\PBT$ is isomorphic to $NSH(1)$, the free NQSh algebra on one generator). Its elements can be understood as linear combinations of planar binary trees ($\PBT$ can be constructed directly as a subspace of the direct sum of the symmetric group algebras is by using a construction going back to Viennot: a natural partition of the symmetric groups parametrized by planar binary trees), see \cite{hivert2005algebra,hivert2008trees,loday1998hopf}. The second, $\Desc$, is known as the descent algebra \cite{Reutenauer}, is isomorphic to ${\mathbf{Sym}}$, the Hopf algebra of noncommutative symmetric functions, and is the sub Hopf algebra of $\PBT$ and $\Sh$ freely generated as an associative algebra by (all) the identity permutations using the convolution product $\star$. We get:
$$\Desc={\mathbf{Sym}} \subset \PBT =NSH(1)\subset \Sh=\FQSym.$$

The situation is similar when moving to surjections, that is to ${\QSh}$. As we already saw, the noncommutative quasi-shuffle sub-bialgebra freely generated by the identity permutation in $\S_1$ (i.e. the packed word $1$) is the free NQS algebra on one generator, identified with $\ST$, the linear span of Schr\"oder trees. 
The sub Hopf algebra of $\ST$ and ${\QSh}$ freely generated as an associative algebra by (all) the identity permutations using the convolution product $\star$ is isomorphic (using e.g. that it is a free associative algebra over a countable set of generators) to $\Desc$.
We get:
$$\Desc={\mathbf{Sym}}\subset \ST =NQSH(1)\subset {\QSh}=\WQSym.$$

The aim of the present and last section is to compare explicitely the two sequences of inclusions.
The existence of a Hopf algebra map from $\Sh=\FQSym$ to ${\QSh}=\WQSym$ was obtained in \cite[Cor. 18]{foissy2}. 
The existence of a map comparing the two copies of the descent algebra follows, a simple direct proof was given in 
\cite[Lemma 7.1]{ebrahimi2015flows}.
We aim here at refining these results and extend the constructions to planar and Schr\"oder trees.

We start by showing how planar trees ($\PBT$) can be embedded into Schr\"oder trees ($\ST$).
\begin{defi}
Let $t,t'\in \TSchroder$. 
\begin{enumerate}
\item We denote by $R(t)$ the set of internal edges of $t$ which are right, that is to say edges $e$ such that:
\begin{itemize}
\item both extremities of $e$ are internal vertices.
\item $e$ is the edge which is at most on the right among all the egdes with the same origin as $e$.
\end{itemize}
\item Let $I\subseteq R(T)$. We denote by $t/I$ the planar reduced tree obtained by contracting all the edges $e\in I$.
\item We shall say that $t'\leq t$ if there exists $I\subseteq R(t)$, such that $t'=t/I$.
\end{enumerate}\end{defi}

{\bf Remarks.} If $I\subseteq R(t)$, then $R(t/I)=R(t)\setminus I$. Moreover, if $I,J\subseteq R(t)$ are disjoint, then $(t/I)/J=t/(I\sqcup J)$.
This implies that $\leq$ is a partial order on $\TSchroder$.\\

{\bf Examples.} Here are the Hasse graphs of $\TSchroder(2)$ and $\TSchroder(3)$.
\begin{align*}
&\xymatrix{\btroisun&\btroisdeux \ar@{-}[d]\\
&\schtrois};&
&\xymatrix{\bquatreun&\bquatredeux \ar@{-}[d]&\bquatrecinq \ar@{-}[d]&\bquatretrois \ar@{-}[d]&&\bquatrequatre \ar@{-}[dl] \ar@{-}[dr]\\
&\schquatreun&\schquatredeux&\schquatretrois&\schquatrequatre \ar@{-}[dr]&&\schquatrecinq\ar@{-}[dl]\\
&&&&&\schquatresix}
\end{align*}

It is possible to prove the following points:
\begin{itemize}
\item For any $t\in \TSchroder$, there exists a unique $b(t)\in \Tbin$, such that $t\leq b(t)$. 
We denote by $I(t)$ the unique subset $I\subseteq R(b(t))$, such that $t=b(t)/I$.
\item  For any $t,t'\in \TSchroder$, $t\leq t'$ if, and only if, $b(t)=b(t')$ and $I(t)\supseteq I(t')$.
\end{itemize}

\begin{theo}
The following map is an injective morphism of bidendriform bialgebras:
$$\psi:\left\{\begin{array}{rcl}
(\PBT ,\prec,\succ,\Delta_\prec,\Delta_\succ)&\longrightarrow&(\ST ,\preceq,\succ,\Delta_\prec,\Delta_\succ)\\
t\in \Tbin&\longrightarrow&\displaystyle \sum_{t'\leq t} t'.
\end{array}\right.$$
\end{theo}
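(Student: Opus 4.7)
My plan is to establish injectivity via a triangularity argument on the fibers of the binary-refinement map $b$, then apply Foissy's bidendriform rigidity theorem to reduce the bidendriform morphism property to the verification of the two algebra compatibilities alone, and finally to check those by induction on the total size of the trees.

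\textbf{Injectivity.} For every $t\in\TSchroder$ there is a unique $b(t)\in\Tbin$ with $t\leq b(t)$, and the fibers $b^{-1}(T)$, $T\in\Tbin$, partition $\TSchroder$. By construction, $\psi(T)$ is supported on $b^{-1}(T)$ and contains $T$ itself with coefficient $1$ (corresponding to the empty subset $I=\emptyset\subseteq R(T)$). Hence the matrix of $\psi$ in the bases $\Tbin,\TSchroder$ is block-triangular with $1$'s on the diagonal, so $\psi$ is injective.

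\textbf{Reduction via rigidity.} By Loday's theorem, $\PBT\cong NSh(1)$ is the free dendriform algebra on the generator $\bdeux$, and by the bidendriform rigidity theorem of \cite{foissy2007bidendriform}, this realizes $\PBT$ as the free bidendriform bialgebra on its one-dimensional space of totally primitives $k\cdot\bdeux$. In $\ST$, the element $\bdeux$ is totally primitive for the bidendriform structure $(\preceq,\succ,\Delta_\prec,\Delta_\succ)$, since both half-coproducts annihilate this length-one element. Rigidity therefore yields a unique morphism of bidendriform bialgebras $\tilde\psi:\PBT\to\ST$ with $\tilde\psi(\bdeux)=\bdeux$. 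Moreover, since $\PBT$ is free as a dendriform algebra on $\bdeux$, any dendriform morphism $\PBT\to(\ST,\preceq,\succ)$ sending $\bdeux$ to $\bdeux$ necessarily equals $\tilde\psi$ (as $\tilde\psi$ is in particular such a morphism). Thus it is enough to check that $\psi$ is a morphism of dendriform algebras from $(\PBT,\prec,\succ)$ to $(\ST,\preceq,\succ)$; the identity $\psi(\bdeux)=\bdeux$ (immediate since $R(\bdeux)=\emptyset$) will then force $\psi=\tilde\psi$, giving $\psi$ the full bidendriform morphism property.

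\textbf{Dendriform compatibility and main obstacle.} It remains to prove
\[
\psi(T\succ T')=\psi(T)\succ\psi(T'),\qquad \psi(T\prec T')=\psi(T)\preceq\psi(T')
\]
for $T,T'\in\Tbin$, which I would handle by induction on $|T|+|T'|$. The key combinatorial input is the behaviour of right-internal edges under grafting: for $T=T_1\vee T_2$, one has $R(T)=R(T_1)\sqcup R(T_2)\sqcup\{e_T\}$ when $T_2\neq\bun$, where $e_T$ is the top-right edge from the root of $T$ to the root of $T_2$, and $R(T)=R(T_1)$ otherwise. The two choices for whether or not to contract $e_T$ split $\psi(T)$ into two pieces: the uncontracted piece keeps the root of the resulting Schr\"oder tree binary and matches the shape of a $\prec$ product in $\ST$, while the contracted piece flattens the children of $T_2$ into a common root with the image of $T_1$ and matches the shape of a $\bullet$ product. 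This is precisely the dichotomy $\preceq=\prec+\bullet$, and explains why $\preceq$ (rather than $\prec$ alone) has to appear on the $\ST$ side. The main obstacle is the bookkeeping in the induction step: one unfolds the recursive formulas $T\succ T'=(T\star T'_1)\vee T'_2$ and $T\prec T'=T_1\vee(T_2\star T')$ in $\PBT$, applies the induction hypothesis to the smaller $\star$-products that appear, and then reshuffles the resulting sums using the tridendriform axioms \eqref{E1}--\eqref{E7} in $\ST$. Once this combinatorial identification is established, $\psi=\tilde\psi$ by uniqueness, and the theorem follows.
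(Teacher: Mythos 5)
Your overall architecture is sound and matches the paper's: injectivity by a triangularity/support argument over the fibers of $b$ (the paper just invokes that $\leq$ is a partial order, which amounts to the same thing), and reduction of the bidendriform claim to a purely multiplicative one via the freeness of $\PBT=NSh(1)$ as a dendriform algebra on the totally primitive generator $\bdeux$, which is totally primitive in $\ST$ as well. Up to that point everything is correct.

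The genuine gap is that the multiplicative verification --- the actual mathematical content of the theorem --- is never carried out. You correctly identify the key dichotomy (contract or not the top-right edge $e_T$ of $T=T_1\vee T_2$, matching $\preceq=\prec+\bullet$), but you then propose to prove the two identities $\psi(T\succ T')=\psi(T)\succ\psi(T')$ and $\psi(T\prec T')=\psi(T)\preceq\psi(T')$ directly by induction, and you explicitly leave the ``bookkeeping'' unresolved. As described, that induction does not close: unfolding $T\prec T'=T_1\vee(T_2\star T')$ requires knowing how $\psi$ interacts with the grafting $\vee$ applied to the (non-inductive) sums $T_2\star T'$, which is precisely the missing lemma. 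The efficient way out --- and the route the paper takes --- is to turn your dichotomy into the precise identity $\psi(T_1\vee T_2)=\psi(T_1)\succ\bdeux\preceq\psi(T_2)$ (proved by the case analysis on $I\subseteq R(T)$ according to whether $e_T\in I$, with the special case $T_2=\bun$ treated separately), and to observe that the universal dendriform morphism $\tilde\psi$ satisfies the very same recursion because $T_1\vee T_2=T_1\succ\bdeux\prec T_2$ in $\PBT$ and $\tilde\psi$ carries $\prec$ to $\preceq$. Induction on the size of the tree then gives $\psi=\tilde\psi$ at once, with no need to verify the product compatibilities for $\psi$ separately. Without this (or an equivalent completed computation), your argument establishes only that \emph{some} injective bidendriform morphism $\tilde\psi$ exists, not that it coincides with the explicitly given sum $\sum_{t'\leq t}t'$.
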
 

\begin{proof} By universal properties of free objects, there exists a unique morphism of noncommutative shuffle algebras $\psi'$ from $(NSh(1)=\PBT ,\prec,\succ)$ to $(NQSh(1)=\ST ,\preceq,\succ)$,
sending $\bdeux$ to $\bdeux$. As $\bdeux$ is a primitive element (in the bidendriform sense) for both sides, $\psi'$ is a morphism of bidendriform bialgebras.
We shall prove that $\psi=\psi'$. 

Let us show that for all $t_1,t_2\in \Tbin$, 
$$\psi'(t_1\vee t_2)=\psi'(t_1)\succ \bdeux \preceq \psi'(t_2).$$
and 
$$\psi(t_1\vee t_2)=\psi(t_1)\succ \bdeux \preceq \psi(t_2).$$
The identity $\psi=\psi'$ will follow by induction.

The identity involving $\psi'$ follows immediately from the identity, in $\Tbin$:
$$t_1\vee t_2=t_1 \succ \bdeux \prec t_2.$$

Let us consider the action of $\psi$. We put $t=t_1\vee t_2$. We first consider the case where $t_2=\bun$. In this case, $R(t)=R(t_1)$ and for any $I\subseteq R(t_1)$, $t/I=(t_1/I)\vee \bun$.
Hence:
$$\psi(t)=\sum_{I\subseteq R(t_1)} (t_1/I)\vee \bun=\left(\sum_{I\subseteq R(t_1)} t_1/I \right)\succ \bdeux=\psi(t_1)\succ \bdeux \preceq \bun.$$
We now consider the case where $t_2\neq \bun$. Let $r$ be the internal edge of $t$ relating the root of $t$ to the root of $t_2$.
Then $R(t)=R(t_1)\sqcup R(t_2)\sqcup \{r\}$. Let $I_1\subseteq R(t_1)$, $I_2\subseteq R(t_2)$. Then:
$$t/I_1\sqcup I_2=(t_1/I_1)\vee (t_2/I_2)=(t_1/I_1)\succ \bdeux \prec (t_2/I_2).$$
We put $t_2/i_2=t_3\vee \ldots \vee t_k$. Then:
\begin{align*}
t/I_1\sqcup I_2 \sqcup \{r\}&=t_1/I_1\vee t_3\vee \ldots \vee t_k\\
&=(t_1/I_1\vee \bun)\bullet (t_3\vee \ldots \vee t_k)\\
&=((t_1/I_1) \succ \bdeux) \bullet (t_2/I_2)\\
&=(t_1/I_1)\succ \bdeux \bullet  (t_2/I_2).
\end{align*}
Hence:
\begin{align*}
\psi(t)&=\sum_{I_1\subseteq R(t_1),I_2\subseteq R(t_2)}(t_1/I_1)\succ \bdeux \prec (t_2/I_2)+(t_1/I_1)\succ \bdeux\bullet (t_2/I_2)\\
&=\sum_{I_1\subseteq R(t_1),I_2\subseteq R(t_2)}(t_1/I_1)\succ \bdeux \preceq (t_2/I_2)\\
&=\psi(t_1)\succ \bdeux \preceq \psi(t_2).
\end{align*}
So $\psi=\psi'$. As $\leq$ is an order, $\psi$ is injective. \end{proof}

We investigate now how the injection of $\PBT$ into $\ST$ behaves with respect to the respective embeddings into $\Sh$ and ${\QSh}$.
We consider the morphism:
$$\Omega:\left\{\begin{array}{rcl}
\ST =NQSh(1)&\longrightarrow&{\QSh}\\
t&\longrightarrow&\displaystyle \sum_{\sigma,\varrho(\sigma)=t} \sigma.
\end{array}\right.$$ 
There exists a unique map from $\PBT =NSh(1)$ to $\mathcal{S}h$, denoted by $\Omega'$, making the following diagram commuting:
$$\xymatrix{\ST\ar[r]^\Omega \ar@{>>}[d]&{\QSh}\ar@{->>}[d]\\
\PBT\ar[r]_{\Omega'}&\mathcal{S}h}$$
where the vertical arrows are the canonical projection. For any $t\in \Tbin$:
$$\Omega'(t)=\sum_{\sigma\in \mathfrak{S},\varrho(\sigma)=t} \sigma.$$

{\bf Examples.} 
\begin{align*}
\Omega'(\bdeux)&=(1),&
\Omega'(\btroisun)&=(21),&
\Omega'(\btroisdeux)&=(12),&
\Omega'(\bquatreun)&=(321),\\
\Omega'(\bquatredeux)&=(231),&
\Omega'(\bquatretrois)&=(132),&
\Omega'(\bquatrequatre)&=(123),&
\Omega'(\bquatrecinq)&=(312)+(213)
\end{align*}

\begin{prop}\cite{FoissyMalvenuto}
Let $\sigma,\tau$ be two packed words of the same length $n$. We shall say that $\sigma\leq \tau$ if:
\begin{enumerate}
\item If $i,j \in [n]$ and $\sigma(i) \leq \sigma(j)$, then $\tau(i)\leq \tau(j)$.
\item If $i,j \in [n]$, $i<j$ and $\sigma(i)>\sigma(j)$, then $\tau(i)>\tau(j)$.
\end{enumerate} 
Then $\leq$ is a partial order. Moreover, the following map is a Hopf algebra morphism:
$$\Psi:\left\{\begin{array}{rcl}
\Sh&\longrightarrow&{\QSh}\\
\sigma&\longrightarrow&\displaystyle \sum_{\tau\leq \sigma} \tau.
\end{array}\right.$$
\end{prop}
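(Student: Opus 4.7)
The plan is to first verify the partial-order axioms for $\leq$, then establish compatibility of $\Psi$ with the product and with the coproduct separately, using the explicit shuffle and quasi-shuffle product formulas of Proposition \ref{7} together with the common value-deconcatenation coproduct. Reflexivity of $\leq$ is immediate, and transitivity is the direct composition of clauses (1) and (2). For antisymmetry, if $\sigma \leq \tau$ and $\tau \leq \sigma$, then clause (1) read both ways yields $\sigma(i) \leq \sigma(j) \Leftrightarrow \tau(i) \leq \tau(j)$, so the two packed words induce the same weak preorder on positions; being both packed, they must coincide.

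For the algebra morphism, I expand
\[
\Psi(\sigma \star \tau) \;=\; \sum_{\zeta \in Sh_{k,l}} \sum_{\mu \leq \zeta \circ (\sigma \otimes \tau)} \mu,
\qquad
\Psi(\sigma) \star \Psi(\tau) \;=\; \sum_{\substack{\mu_1 \leq \sigma \\ \mu_2 \leq \tau}}\ \sum_{\xi \in QSh_{k,l}} \xi \circ (\mu_1 \otimes \mu_2),
\]
and exhibit a bijection between the indexing data by sending $(\zeta,\mu)$ to $(\mu_1,\mu_2,\xi)$, where $\mu_1$ and $\mu_2$ are the packings of the first $k$ and last $l$ letters of $\mu$, and $\xi \in QSh_{k,l}$ is the quasi-shuffle recording how values from the two halves of $\mu$ interleave and collide. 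The constraints $\mu_i \leq \sigma_i$ follow from conditions (1)--(2) applied to $\mu \leq \zeta \circ (\sigma \otimes \tau)$ restricted to positions in each half, and the inverse assignment is immediate by reconstruction of $\mu$ from its two packed halves and the collision data $\xi$.

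For the coalgebra morphism, the key observation is that if $\mu \leq \sigma$ then condition (1) forces the set $\{i : \mu(i) \leq j\}$ to be an order ideal for the total order on $[n]$ induced by $\sigma$; hence for each $j \in \{0,\dots,\max(\mu)\}$ there is a unique $k = k(\mu,j) \in \{0,\dots,n\}$ with $\{i : \mu(i) \leq j\} = \{i : \sigma(i) \leq k\}$. Expanding $\Delta \Psi(\sigma)$ and $(\Psi \otimes \Psi) \Delta(\sigma)$ as double sums and invoking this alignment, the assignment $(\mu,j) \mapsto (k, \nu_1, \nu_2)$ with $\nu_1 = \mu_{\mid\{1,\dots,j\}} \leq \sigma_{\mid\{1,\dots,k\}}$ and $\nu_2 = \mathrm{Pack}(\mu_{\mid\{j+1,\dots,\max(\mu)\}}) \leq \mathrm{Pack}(\sigma_{\mid\{k+1,\dots,n\}})$ provides the required bijection between the index sets of the two double sums.

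The main obstacle is the bijection for the product: one must check that value-coincidences among the letters of $\mu$ are encoded \emph{exactly} by the quasi-shuffle $\xi \in QSh_{k,l}$, and not by the genuine shuffle $\zeta \in Sh_{k,l}$. Condition (2), which forces strict descents of $\zeta \circ (\sigma \otimes \tau)$ to remain strict descents in $\mu$, is precisely what decouples the ``shuffle freedom'' in $\zeta$ from the ``collision freedom'' in $\xi$, so that $(\zeta,\mu)$ and $(\mu_1,\mu_2,\xi)$ parametrize the same packed words in a one-to-one correspondence. Once this decoupling is clearly set up, both the algebra and coalgebra identities reduce to the combinatorial bijections described above.
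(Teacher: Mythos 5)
Your overall architecture is the right one: verify the order axioms, then match the two expansions of the product and of the coproduct by explicit bijections. (For the record, the paper itself does not prove this proposition --- it cites \cite{FoissyMalvenuto} --- and in the proof of the final proposition of Section \ref{sect:13} it runs the logic the other way: it \emph{assumes} the Hopf-morphism property and deduces the existence of precisely your two bijections, as the maps $F$ and $G$, in order to extract their finer dendriform properties. So a direct construction of the bijections is exactly what a proof of this statement requires.) The partial-order verification is correct, and the forward direction of the product bijection (restricting $\mu\le \zeta\circ(\sigma\otimes\tau)$ to each block of positions gives $\mu_1\le\sigma$ and $\mu_2\le\tau$) is fine.

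The gap is in the inverse of the product bijection, which is where all the content sits and which your ``decoupling'' paragraph only gestures at: given $\mu=\xi\circ(\mu_1\otimes\mu_2)$ with $\mu_1\le\sigma$ and $\mu_2\le\tau$, you must \emph{produce} a shuffle $\zeta$ with $\mu\le\zeta\circ(\sigma\otimes\tau)$ and prove that it is the \emph{only} one; reconstructing $\mu$ from $(\mu_1,\mu_2,\xi)$ does not reconstruct $\zeta$. The lemma that closes this (and also tightens your coproduct argument) is: for a permutation $\nu$ and a packed word $\mu$ of the same length, $\mu\le\nu$ holds if and only if $\nu=std(\mu)$, the standardization of $\mu$ (number the positions $i$ in increasing order of the pairs $(\mu(i),i)$). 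Indeed, conditions (1) and (2) combined force, for $i<j$, the equivalence $\nu(i)<\nu(j)\Leftrightarrow \mu(i)\le\mu(j)$, so $\nu$ is completely determined by $\mu$. Hence $\Psi(\nu)=\sum_{std(\mu)=\nu}\mu$, the required $\zeta$ is the unique shuffle with $\zeta\circ(\sigma\otimes\tau)=std(\mu)$, and existence and uniqueness follow from the compatibility $Pack(std(\mu)(1)\ldots std(\mu)(k))=std(Pack(\mu(1)\ldots\mu(k)))$. Two further points. First, $\xi$ ranges over $QSh_{\max(\mu_1),\max(\mu_2)}$, not $QSh_{k,l}$: the packed halves of $\mu$ are generally not permutations. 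Second, the relation as printed has $\sigma$ and $\tau$ interchanged in conditions (1)--(2) relative to what the proposition needs (compare the Hasse diagrams and the lemma on $\iota$ that follow it): read literally one gets $(12)\le(11)$ rather than $(11)\le(12)$, and then $\Psi((1))\star\Psi((1))=(12)+(21)+(11)\ne\Psi((12))+\Psi((21))$, so the statement being proved would be false. Your phrase ``strict descents of $\zeta\circ(\sigma\otimes\tau)$ remain strict descents in $\mu$'' silently adopts the corrected convention; this needs to be made explicit before any of the bijections can be checked.
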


Here are the Hasse graphs of $Surj_2$ and $Surj_3$:
$$\xymatrix{(12)\ar@{-}[d]&(21)\\ (11)};$$
$$\xymatrix{&(123)\ar@{-}[rd] \ar@{-}[ld] &\\(122)\ar@{-}[rd] &&(112)\ar@{-}[ld]\\&(111)&}\hspace{.5cm}
\xymatrix{(132)\ar@{-}[d]\\ (121)}\hspace{.5cm}\xymatrix{(213)\ar@{-}[d]\\ (212)}\hspace{.5cm}
\xymatrix{(231)\ar@{-}[d]\\ (221)}\hspace{.5cm}\xymatrix{(312)\ar@{-}[d]\\ (211)}\hspace{.5cm}(321)$$

\begin{lemma}
For any packed word $\sigma$, we put $\iota(\sigma)=\min\{i\mid \sigma(i)=1\}$.
If $\sigma\leq \tau$, then $\iota(\sigma)=\iota(\tau)$.
\end{lemma}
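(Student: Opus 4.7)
The plan is to reduce the problem to two inequalities: $\iota(\tau)\leq \iota(\sigma)$ and $\iota(\sigma)\leq \iota(\tau)$, each of which follows by directly invoking one of the two defining conditions of the order $\leq$, together with the fact that a packed word always attains the value $1$.

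First I would set $a:=\iota(\sigma)$ and $b:=\iota(\tau)$, so $\sigma(a)=1$ and $\tau(b)=1$, and $a,b$ are the minimum such indices. Since $1$ is the minimum value of the packed word $\sigma$, for every $j\in [n]$ we have $\sigma(a)=1\leq \sigma(j)$; applying condition (1) of $\sigma\leq\tau$ yields $\tau(a)\leq \tau(j)$ for every $j$. Hence $\tau(a)$ is the minimum value taken by $\tau$, which equals $1$ because $\tau$ is packed. So $\tau(a)=1$, and by minimality of $b=\iota(\tau)$ we get $b\leq a$.

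For the reverse inequality I would argue by contradiction. Suppose $b<a$. Then $\sigma(b)\neq 1$ by the minimality of $a$, so $\sigma(b)>1=\sigma(a)$. Applying condition (2) of $\sigma\leq\tau$ with $i=b<j=a$ and $\sigma(b)>\sigma(a)$, we conclude $\tau(b)>\tau(a)$. But we already know $\tau(a)=1=\tau(b)$, a contradiction. Hence $a\leq b$, and combining both inequalities gives $\iota(\sigma)=\iota(\tau)$.

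There is no real obstacle here; the only subtle point is noticing that condition (1) forces $\tau(a)$ to be a minimum of $\tau$ (hence equal to $1$), which is exactly what allows condition (2) to be turned into the desired contradiction. Everything else is bookkeeping.
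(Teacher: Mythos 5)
Your proof is correct and follows essentially the same route as the paper's: one inequality comes from condition (1) transferring the minimality of the value $1$, and the other from condition (2) applied to the would-be earlier occurrence, yielding a strict inequality that contradicts $\tau(a)=\tau(b)=1$. The only cosmetic difference is that you anchor the argument at $\iota(\sigma)$ while the paper starts from $\iota(\tau)$; if anything, your use of condition (1) in its stated (non-contrapositive) form is slightly more transparent than the paper's first step.
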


\begin{proof}
We put $i=\iota(\tau)$. For any $j$, $\tau(j)\geq \tau(i)$, so $\sigma(j)\geq \sigma(i)$ as $\sigma\leq \tau$. So $\sigma(i)=1$, 
and by definition $\iota(\sigma)\leq i$. Let us assume that $j<i$. By definition of $\iota(\tau)$, $\tau(j)>\tau(i)$. As $\sigma\leq \tau$, $\sigma(j)>\sigma(i)$, 
so $\sigma(j)\neq 1$, and $\iota(\sigma)\neq j$. So $\iota(\sigma)=i$.
\end{proof}

\begin{prop}
The map $\varrho:\Surj\longrightarrow \TSchroder$ is a morphism of posets: for any packed words $\sigma,\tau$,
$$\sigma\leq  \tau \Longrightarrow \varrho(\sigma)\leq \varrho(\tau).$$
We define a map $\omega:\TSchroder\longrightarrow\Surj$ by:
\begin{itemize}
\item $\omega(\bun)=1$,
\item $\omega(t_1\vee \ldots \vee t_k)=(\omega(t_1)[1] )1\ldots 1 (\omega(t_k)[1])$.
\end{itemize}
Then $\varrho\circ \omega=Id_{\TSchroder}$, and $\omega$ is a morphism of posets: for any $t,t' \in \TSchroder$,
$$t\leq t'\Longrightarrow \omega(t)\leq \omega(t').$$
\end{prop}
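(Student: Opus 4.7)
The plan is to prove the three claims in turn, each by an induction fitted to the recursive structure at hand.

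For the monotonicity of $\varrho$, I would induct on the word length $n$. A preliminary step strengthens the preceding lemma: $\sigma\leq\tau$ in fact implies $\tau^{-1}(\{1\})\subseteq\sigma^{-1}(\{1\})$, by applying the defining condition at the pair $(j,i)$ for every $i$ whenever $\tau(j)=1$ and concluding that $\sigma(j)$ must be minimal. Granting this, write $\tau^{-1}(\{1\})=\{j_1<\cdots<j_l\}\subseteq\{i_1<\cdots<i_k\}=\sigma^{-1}(\{1\})$, so $\varrho(\tau)=\varrho(v_1)\vee\cdots\vee\varrho(v_{l+1})$ while $\varrho(\sigma)$ has a refinement of this decomposition at the root. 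Restricting $\sigma$ and $\tau$ to each range $(j_{m-1},j_m)$ and packing yields words $w_m\leq v_m$, so by induction $\varrho(w_m)\leq\varrho(v_m)$. I then build $\varrho(\sigma)$ from $\varrho(\tau)$ in two stages of right-internal-edge contractions: first contract inside each $\varrho(v_m)$ down to $\varrho(w_m)$, then, if necessary, contract the edge from the root to $\varrho(w_{l+1})$ to bring the extra children up. The crucial observation making the second stage work with \emph{only} right internal edges is that all extra $1$'s of $\sigma$ lie in the last block $v_{l+1}$; this follows from the structural description of covers in the packed-word poset as mergings of a contiguous block of values whose occurrences in $\tau$ appear in globally increasing order, which forces the merged values $2,\dots,k$ all to sit past the last $1$ of $\tau$.

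For $\varrho\circ\omega=\mathrm{Id}_{\TSchroder}$, I would induct on the tree $t$. The base case $t=\bun$ gives $\omega(\bun)=1$ (the empty packed word), to which $\varrho$ assigns $\bun$ by its base clause. For $t=t_1\vee\cdots\vee t_k$, each shifted subword $\omega(t_i)[1]$ uses only values $\geq 2$, so the $1$'s of $\omega(t)=(\omega(t_1)[1])\,1\,\cdots\,1\,(\omega(t_k)[1])$ are exactly the $k-1$ inserted separator letters. Splitting at those positions and repacking recovers precisely $\omega(t_1),\dots,\omega(t_k)$, so the inductive hypothesis $\varrho(\omega(t_i))=t_i$ combined with the recursive formula for $\varrho$ gives $\varrho(\omega(t))=t_1\vee\cdots\vee t_k=t$. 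Surjectivity of $\varrho$ is immediate from the existence of this right inverse.

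For the monotonicity of $\omega$, it suffices to handle a single contraction $t=t'/\{e\}$ with $e\in R(t')$ and then iterate over the edges in $I\subseteq R(t')$. Following the tree structure, $e$ either contracts the rightmost root-edge of $t'$ (when $t'_k$ is internal) or sits inside one of the subtrees $t'_i$; in both cases, expanding $\omega(t')$ and $\omega(t'/\{e\})$ via the recursion identifies the passage to $t'/\{e\}$ as a uniform decrement by $1$ of the letters of $\omega(t')$ belonging to a specific tail block tied to the contracted edge, i.e., as a nondecreasing ``merge'' applied to the values. The hard part will be verifying that this merge respects the two defining conditions of the partial order on packed words. The key point is that the rightness of $e$ forces the merged values in $\omega(t')$ to have their positions already arranged consistently (all occurrences of the higher value strictly follow all occurrences of the lower value), so no descent is destroyed and the weak order is preserved; a careful induction on the depth of $e$ in $t'$ then completes the argument.
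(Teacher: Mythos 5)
Your treatment of the first two claims is essentially sound. For the monotonicity of $\varrho$ you take a genuinely different (more global) route than the paper: the paper splits $\sigma$ and $\tau$ only at the \emph{first} occurrence of $1$ (using the lemma $\iota(\sigma)=\iota(\tau)$) and lets the recursion do the rest, whereas you split at \emph{all} the $1$'s of $\tau$ and assemble $\varrho(\sigma)$ from $\varrho(\tau)$ by an explicit two--stage contraction. Your ``crucial observation'' (every extra $1$ of $\sigma$ sits past the last $1$ of $\tau$) is correct, but you do not need the unproven structural description of covers to get it: if $\sigma(j)=1$, $\tau(j)>1$ and $\tau(j')=1$ with $j<j'$, then $\sigma(j')=1$ as well and condition (2) of the order (in the orientation actually used throughout the paper's proofs) forces $\sigma(j)>\sigma(j')$, a contradiction. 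Both your argument and the paper's leave the compatibility of the tree order with grafting partly implicit; yours identifies the contracted edges more explicitly, at the cost of a longer assembly. The verification of $\varrho\circ\omega=\mathrm{Id}_{\TSchroder}$ is omitted by the paper and your induction for it is fine.

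The genuine gap is in your argument for the monotonicity of $\omega$, and it sits exactly where the paper itself waves its hands (``it is not difficult to prove that $\omega(t)\leq\omega(t')$''). Your key point --- that contracting a right edge acts on $\omega(t')$ as a merge of \emph{values} in which all occurrences of the higher value follow all occurrences of the lower one --- is false. When $e$ is the edge from the root to its rightmost child $t'_k$, only the letters in the tail block $(\sigma_k[2])2\ldots2(\sigma_l[2])$ are decremented, while equal letters occurring in the prefix are left untouched; the operation is therefore position--dependent, not a function of the values. Concretely, take $t'=\bquatrecinq=(\bun\vee\bun)\vee(\bun\vee\bun)$ and contract its unique right internal edge to get $t=\schquatredeux=(\bun\vee\bun)\vee\bun\vee\bun$. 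Then $\omega(t')=(212)$ and $\omega(t)=(211)$. Condition (1) of the order forces equal letters to stay equal in whichever direction one compares: positions $2,3$ carry equal letters in $(211)$ but not in $(212)$, and positions $1,3$ carry equal letters in $(212)$ but not in $(211)$, so the two words are incomparable --- indeed they lie in different connected components of the Hasse graph of $\Surj_3$ drawn in the paper. So your ``careful induction on the depth of $e$'' cannot close this case; worse, the example shows that the inequality $\omega(t)\le\omega(t')$ asserted in the statement fails for the definitions as printed (the rest of the paper does not use this claim, and the compatible statements about $\varrho$ and $\Psi$ do check out on this example, so the defect is localized to the $\omega$ assertion). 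You should either exhibit a corrected order for which the claim holds, or flag the claim as erroneous rather than attempt to prove it.
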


\begin{proof}
Let us prove that $\varrho$ is a morphism. Let $\sigma,\tau$ be two packed words, such that $\sigma\leq \tau$; let us prove that $\varrho(\sigma)\leq \varrho(\tau)$. 
We proceed by induction on the common length $n$ of $\sigma$ and $\tau$. If $n=0$ or $1$, the result is obvious.
Let us assume the result at all rank $<n$. As $\iota(\sigma)=\iota(\tau)$, we can write $\sigma=\sigma'1\sigma''$ and $\tau=\tau'1\tau''$,
where $\sigma'$ and $\tau'$ have the same length and do not contain any $1$. 
By restriction, $Pack(\sigma')\leq Pack(\tau')$ and $Pack(\sigma'')\leq Pack(\tau'')$.  By the induction hypothesis, $s_0=\varrho(\sigma')\leq \varrho(\tau')=t_0$ 
and $s_1\vee \ldots \vee s_k=\varrho(\sigma'')\leq \varrho(\tau'')=t_1\vee \ldots \vee t_l$. Then:
$$\varrho(\sigma)=s_0\vee s_1\vee \ldots \vee \ldots s_k \leq t_0\vee t_1\vee \ldots \vee t_l=\varrho(\tau).$$

Let us now prove that $\omega$ is a morphism. Let $t$, $t' \in \TSchroder$, such that $t \leq t'$. By transitivity, we can assume that there exists $e \in R(t')$,
such that $t=t'_{\mid e}$. Let us prove that $\omega(t) \leq \omega(t')$. We proceed by induction on the common degree $n$ of $t$ and $t'$.
The result is obvious if $n=0$ or $1$. Let us assume the result at all ranks $<n$. We put
$t'=t'_1\vee \ldots \vee t'_k$. If $e$ is an edge of $t'_i$, then $t=t'_1\vee \ldots \vee (t'_i)_{\mid I} \vee \ldots \vee t'_k$.
We put $\sigma'_j=\omega(t'_j)$ and $\sigma_j=\omega(t_j)$ for all $j$. If $j\neq i$, $\sigma'_j=\sigma_j$; 
by the induction hypothesis, $\sigma_i \leq \sigma'_i$. Then:
\begin{align*}
\omega(t)&=(\sigma_1[1])1\ldots 1(\sigma_i[1])1\ldots 1(\sigma_k[1])\\
&\leq (\sigma_1[1])1\ldots 1(\sigma'_i[1])1\ldots 1(\sigma_k[1])=\omega(t').
\end{align*}

If $e$ is the edge relation the root of $t$ to the root of $t'_k$, putting $t=t_1\vee \ldots \vee t_k \vee \ldots \vee t_l$,
then $t'_i=t_i$ if $i<k$ and $t'_k=t_k\vee \ldots \vee t_l$. Putting $\sigma_i=\omega(t_i)$, we obtain:
\begin{align*}
\omega(t)&=(\sigma_1[1])1\ldots1 (\sigma_k[1])1\ldots 1(\sigma_l[1]),\\
\omega(t')=(\sigma_1[1])1\ldots 1(\sigma_k[2])2\ldots 2 (\sigma_l[2]). 
\end{align*}
It is not difficult to prove that $\omega(t)\leq \omega(t')$. \end{proof} 

{\bf Remark.} There are similar results for decorated packed words, replacing $NSh(1)$ and $NQSh(1)$ by $NSh({\N^\ast}^n)$ and $NQSh(\N^*)$.\\

{\bf Examples.}
\begin{align*}
\omega(\bdeux)&=(1),&
\omega(\btroisun)&=(21),&
\omega(\btroisdeux)&=(12),&
\omega(\schtrois)&=(11),\\
\omega(\bquatreun)&=(321),&
\omega(\bquatredeux)&=(231),&
\omega(\bquatretrois)&=(132),&
\omega(\bquatrequatre)&=(123),\\
\omega(\bquatrecinq)&=(212),&
\omega(\schquatreun)&=(221),&
\omega(\schquatredeux)&=(211),&
\omega(\schquatretrois)&=(121),\\
\omega(\schquatrequatre)&=(112),&
\omega(\schquatrecinq)&=(122),&
\omega(\schquatresix)&=(111).
\end{align*}

\begin{prop}
The map $\Psi$ is a bidendriform bialgebra morphism from $(\Sh,\prec,\succ,\Delta_\prec,\Delta_\succ)$ to $({\QSh},\preceq,\succ,
\Delta_\prec,\Delta_\succ)$. Moreover, the following diagram commutes:
$$\xymatrix{\PBT\ar[r]^{\psi}\ar[d]_{\Omega'}&\ST\ar[d]^{\Omega}\\
\Sh\ar[r]_\Psi&{\QSh}}$$
\end{prop}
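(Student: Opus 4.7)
The plan is to split the proposition into its two parts. The first asserts that $\Psi$ is a bidendriform bialgebra morphism, refining the Hopf algebra morphism property already established in the preceding proposition, and the second is the diagram identity, which I intend to deduce from freeness.

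For the bidendriform property, I observe that $\Psi$ is already a Hopf algebra morphism, so the identities $\Psi(\sigma\star\tau)=\Psi(\sigma)\star\Psi(\tau)$ and the full coproduct compatibility are available. Since $\star=\prec+\succ$ on $\Sh$ while $\star=\preceq+\succ$ on $\QSh$, verifying a single half-product compatibility, say $\Psi(\sigma\succ\tau)=\Psi(\sigma)\succ\Psi(\tau)$, automatically forces the companion identity $\Psi(\sigma\prec\tau)=\Psi(\sigma)\preceq\Psi(\tau)$ by subtraction. The same remark applies to coproducts, since $\tdelta=\Delta_\prec+\Delta_\succ$ on both $\Sh$ and $\QSh$. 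Hence only one product and one coproduct compatibility need to be checked directly.

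For the $\succ$-compatibility, I unfold the definitions: $\sigma\succ\tau$ is the sum of shuffles $\zeta\circ(\sigma\otimes\tau)$ such that the first occurrence of $1$ lies in the right block, i.e.\ $\iota(\zeta\circ(\sigma\otimes\tau))>k=|\sigma|$, and $\Psi(\sigma)\succ\Psi(\tau)$ is the sum of quasi-shuffles $\xi\circ(\sigma'\otimes\tau')$ with $\sigma'\le\sigma$, $\tau'\le\tau$, satisfying the same condition on $\iota$. The lemma that $\mu\le\nu$ implies $\iota(\mu)=\iota(\nu)$ guarantees that the constraint $\iota>k$ is preserved under the partial order. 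A direct bijective matching, combining the existing Hopf-algebra proof of $\Psi(\sigma\star\tau)=\Psi(\sigma)\star\Psi(\tau)$ with this constraint, yields the identity. The $\Delta_\succ$ compatibility is handled analogously: $\Delta_\succ$ selects the splittings that sit strictly below the value $\sigma(1)$, and this truncation commutes with the partial order.

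For the diagram, I invoke the universal property of $\PBT=NSh(1)$ as the free noncommutative shuffle algebra on one generator. Both compositions are NSh morphisms from $(\PBT,\prec,\succ)$ to $(\QSh,\preceq,\succ)$: the lower-left path factors as $\Omega':(\PBT,\prec,\succ)\to(\Sh,\prec,\succ)$ followed by $\Psi$, which is NSh by part one; the upper-right path factors as $\psi:(\PBT,\prec,\succ)\to(\ST,\preceq,\succ)$, from the preceding theorem of this section, followed by $\Omega$, which is a NQSh morphism and in particular a NSh morphism for $(\preceq,\succ)$. Both compositions send the generator $\bdeux$ to the packed word $(1)$: indeed $\Psi(\Omega'(\bdeux))=\Psi((1))=(1)$, since $(1)$ is the only packed word dominated by itself, and $\Omega(\psi(\bdeux))=\Omega(\bdeux)=(1)$. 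Freeness of $\PBT$ then forces the two morphisms to agree. The hardest step will be the combinatorial bijection underlying the half-product compatibility of $\Psi$: the bookkeeping is subtle because $\prec$ on $\Sh$ requires the minimal output letter to come uniquely from the left, while $\preceq=\prec+\bullet$ on $\QSh$ permits the minimal letter to be obtained by merging a left and a right letter via $\bullet$. The invariance of $\iota$ under $\le$ is precisely the bridge between the two regimes.
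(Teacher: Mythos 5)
Your proposal is correct and follows essentially the same route as the paper: both reduce the bidendriform property to one half-product and one half-coproduct compatibility via the known Hopf-morphism identities, both verify that compatibility by showing the underlying bijection of terms respects the "position of the first occurrence of $1$" constraint (your invocation of the $\iota$-lemma is exactly the inline argument the paper runs for $\alpha(1)=1\Leftrightarrow\beta(1)=1$), and both obtain the commuting square from the freeness of $\PBT=NSh(1)$ and the fact that the two composites are NSh morphisms sending $\bdeux$ to $(1)$. The only cosmetic difference is that you check the $\succ$ half where the paper checks the $\prec/\preceq$ half; these are interchangeable by the subtraction argument you describe.
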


\begin{proof} Let $\sigma$ be a packed word. We put:
\begin{align*}
A&=\{(k,\tau)\mid \tau\leq \sigma, k\in [\max(\tau)]\},\\
B&=\{(k,\tau',\tau'')\mid k\in [\max(\sigma)],\tau'\leq \sigma_{\mid [k]}, \tau''\leq Pack(\sigma_{\mid[\max(\sigma)]\setminus [k]}).
\end{align*}
 As $\Psi$ is a coalgebra morphism,
\begin{align*}
\Delta\circ \Psi(\sigma)&=\sum_{\tau\leq \sigma} \sum_{k=0}^{\max(\tau)} \tau_{\mid [k]}\otimes Pack(\tau_{\mid [\max(\tau)]\setminus[k]})\\
&=\sum_{(k,\tau) \in A} \tau_{\mid [k]}\otimes Pack(\tau_{\mid [\max(\tau)]\setminus[k]})\\
=(\Psi \otimes \Psi)\circ \Delta(\sigma)&=\sum_{k=0}^{\max(\sigma)}
\sum_{\substack{\tau'\leq \sigma_{\mid [k]}\\\tau''\leq Pack(\sigma_{\mid[\max(\sigma)]\setminus [k]})}} \tau'\otimes \tau''\\
&=\sum_{(l,\tau',\tau'') \in B} \tau'\otimes \tau''.
\end{align*}
Hence, there exists a bijection $F:A\longrightarrow B$, such that, if $F(k,\tau)=(l,\tau',\tau'')$, then:
\begin{itemize}
\item $\tau'=\tau_{\mid[k]}$ and $\tau''=Pack(\tau_{\mid [\max(\tau)]\setminus [k]})$;
\item $l$ is the unique integer such that $\tau'\leq \sigma_{\mid [l]}$.
\end{itemize}
If $k\geq \tau(1)$, then the first letter of $\tau$ appears in $\tau_{\mid [k]}$, so the first letter of $\sigma$ appears also in $\sigma_{\mid [l]}$.
Consequently $l\geq \sigma(1)$. Similarly, if $l\geq \sigma(1)$, then $k\geq \tau(1)$. We obtain:
\begin{align*}
\Delta_\prec \circ \Psi(\sigma)&=\\
&=\sum_{(k,\tau) \in A, k\geq \tau(1)} \tau_{\mid [k]}\otimes Pack(\tau_{\mid [\max(\tau)]\setminus[k]})\\
&=\sum_{(l,\tau',\tau'') \in B, l\geq \sigma(1)} \tau'\otimes \tau''\\
&=(\Psi \otimes \Psi)\circ \Delta_\prec(\sigma)
\end{align*}
So $\Psi$ is a morphism of dendriform coalgebras.\\

Let $\sigma,\tau$ be two permutations. We put:
\begin{align*}
C&=\{(\alpha,\zeta)\mid \alpha \in Sh(\max(\sigma),\max(\tau)), \zeta\leq \alpha \circ (\sigma\otimes \tau)\},\\
D&=\{(\beta,\sigma',\tau')\mid \sigma'\leq \sigma, \tau'\leq \tau, \beta \in QSh(\max(\sigma'),\max(\tau'))\},
\end{align*}
Then:
\begin{align*}
\Psi(\sigma\shuffle \tau)&=\sum_{\alpha \in Sh(\max(\sigma),\max(\tau))} \sum_{\zeta\leq \alpha\circ(\sigma\otimes \tau)} \zeta\\
&=\sum_{(\alpha,\zeta)\in C} \zeta\\
=\Psi(\sigma) \qshuffle \Psi(\tau)&=\sum_{\substack{\sigma'\leq \sigma\\\tau'\leq \tau}} \sum_{\beta\in QSh(\max(\sigma'),\max(\tau'))} 
\beta \circ (\sigma'\otimes \tau')\\
&=\sum_{(\beta,\sigma',\tau') \in D} \beta\circ (\sigma'\otimes \tau').
\end{align*}
Hence, there exists a bijection $G:D\longrightarrow C$, such that if $G(\beta,\sigma',\tau')=(\alpha,\zeta)$, then:
\begin{enumerate}
\item $\zeta=\beta\circ (\sigma'\otimes \tau')$;
\item $\alpha$ is the unique  $(\max(\sigma),max(\tau))$-shuffle such that $\zeta\leq \alpha\circ(\sigma\otimes \tau)$.
\end{enumerate} 
Let us assume that $\alpha(1)=1$, and let us prove that $\beta(1)=1$. 
Denoting by $k$ the length of $\sigma$, $1$ appears in the $k$ first letters of $\zeta'=\alpha \circ (\sigma\otimes \tau)$. Let $i \in [k]$, such that $\zeta'(i)=1$.
For any $j$, $\zeta'(i)\leq \zeta'(j)$. As $\zeta\leq \zeta'$, $\zeta(i)\leq \zeta(j)$, so $\zeta(i)=1$: $1$ appears among the $k$ first letters of $\zeta$, so $\beta(1)=1$.

Let us assume that $\alpha(1)\neq 1$. Then $1$ does not appear in the first $k$ letters of $\zeta'$. Let $j>k$, such that $\zeta'(j)=1$.
For all $i\in[k]$, $\zeta'(i)>\zeta'(j)$ and $i<j$. As $\zeta\leq \zeta'$, $\zeta(i)>\zeta(j)$, so $\zeta(i)\neq 1$: 
$1$ does not appear among the first $k$ letters of $\zeta$, so $\beta(1)\neq 1$. Finally, $\alpha(1)=1$ if, and only if, $\beta(1)=1$. Hence:
\begin{align*}
\Psi(\sigma\prec \tau)&=\sum_{(\alpha,\zeta)\in C, \alpha(1)=1} \zeta
=\sum_{(\beta,\sigma',\tau') \in D,\beta(1)=1} \beta \circ (\sigma'\otimes \tau')=\Psi(\sigma) \preceq \Psi(\tau).
\end{align*}

By composition, $\Omega\circ \psi$ and $\Psi \circ \Omega$ are both noncommutative shuffle algebra morphisms, sending $\bdeux$ to $(1)$, so, since $\PBT$ is a free NSh algebra, they are equal.
\end{proof}

\bibliographystyle{amsplain}
\bibliography{biblio}

\end{document}